\newcommand{\comment}[1]{}
\def\tn{\textnormal}
\def\mc{\mathcal}
\def\ZZ{{\mathbb Z}}
\def\RR{{\mathbb R}}
\def\NN{{\mathbb N}}
\def\Hom{\tn{Hom}}
\def\Ob{\tn{Ob}}
\def\SEL*{\tn{SEL*}}
\def\Res{\tn{Res}}
\def\hsp{\hspace{.3in}}
\newcommand{\hsps}[1]{{\hspace{2mm} #1\hspace{2mm}}}
\newcommand{\tin}[1]{\text{\tiny #1}}
\newcommand{\singlefun}[1]{\ul{1}^{#1}}
\newcommand{\pullb}[1]{\Delta_{#1}}
\newcommand{\lpush}[1]{\Sigma_{#1}}
\newcommand{\rpush}[1]{\Pi_{#1}}
\def\lcone{^\triangleleft}
\def\to{\rightarrow}
\def\down{\downarrrow}
\def\Down{\Downarrow}
\def\cross{\times}
\def\taking{\colon}
\def\inj{\hookrightarrow}
\def\too{\longrightarrow}
\newcommand{\xyright}[1]{\xymatrix{~\ar[r]#1&}}
\newcommand{\xydown}[1]{\xymatrix{~\ar[d]#1\\~}}
\newcommand{\xydoown}[1]{\xymatrix{~\ar[ddd]#1\\\parbox{0in}{~}\\\parbox{0in}{~}\\~}}
\def\ss{\subseteq}
\def\iso{\cong}
\def\down{\downarrow}
\def\|{{\;|\;}}
\def\m1{{-1}}
\def\op{^\tn{op}}
\def\la{\langle}
\def\ra{\rangle}
\def\wt{\widetilde}
\def\ol{\overline}
\def\ul{\underline}
\def\del{\partial\,}
\newcommand{\LMO}[1]{\stackrel{#1}{\bullet}}
\newcommand{\LTO}[1]{\stackrel{\tt{#1}}{\bullet}}
\newcommand{\LA}[2]{\ar[#1]^-{\tn {#2}}}
\newcommand{\LAL}[2]{\ar[#1]_-{\tn {#2}}}
\newcommand{\obox}[3]{\stackrel{#1}{\fbox{\parbox{#2}{#3}}}}
\newcommand{\fakebox}[1]{\tn{$\ulcorner$#1$\urcorner$}}
\def\ullimit{\ar@{}[rd]|(.3)*+{\lrcorner}}
\def\urlimit{\ar@{}[ld]|(.3)*+{\llcorner}}
\def\lllimit{\ar@{}[ru]|(.3)*+{\urcorner}}
\def\lrlimit{\ar@{}[lu]|(.3)*+{\ulcorner}}
\def\ulhlimit{\ar@{}[rd]|(.3)*+{\diamond}}
\def\urhlimit{\ar@{}[ld]|(.3)*+{\diamond}}
\def\llhlimit{\ar@{}[ru]|(.3)*+{\diamond}}
\def\lrhlimit{\ar@{}[lu]|(.3)*+{\diamond}}
\newcommand{\clabel}[1]{\ar@{}[rd]|(.5)*+{#1}}
\newcommand{\TriRight}[7]{\xymatrix{#1\ar[dr]_{#2}\ar[rr]^{#3}&&#4\ar[dl]^{#5}\\&#6\ar@{}[u] |{\Longrightarrow}\ar@{}[u]|>>>>{#7}}}
\newcommand{\TriLeft}[7]{\xymatrix{#1\ar[dr]_{#2}\ar[rr]^{#3}&&#4\ar[dl]^{#5}\\&#6\ar@{}[u] |{\Longleftarrow}\ar@{}[u]|>>>>{#7}}}
\newcommand{\TriIso}[7]{\xymatrix{#1\ar[dr]_{#2}\ar[rr]^{#3}&&#4\ar[dl]^{#5}\\&#6\ar@{}[u] |{\Longleftrightarrow}\ar@{}[u]|>>>>{#7}}}
\newcommand{\arr}[1]{\ar@<.5ex>[#1]\ar@<-.5ex>[#1]}
\newcommand{\arrr}[1]{\ar@<.7ex>[#1]\ar@<0ex>[#1]\ar@<-.7ex>[#1]}
\newcommand{\arrrr}[1]{\ar@<.9ex>[#1]\ar@<.3ex>[#1]\ar@<-.3ex>[#1]\ar@<-.9ex>[#1]}
\newcommand{\arrrrr}[1]{\ar@<1ex>[#1]\ar@<.5ex>[#1]\ar[#1]\ar@<-.5ex>[#1]\ar@<-1ex>[#1]}
\newcommand{\To}[1]{\xrightarrow{#1}}
\newcommand{\Too}[1]{\xrightarrow{\ \ #1\ \ }}
\newcommand{\From}[1]{\xleftarrow{#1}}
\newcommand{\Adjoint}[4]{\xymatrix@1{#2 \ar@<.5ex>[r]^-{#1} & #3 \ar@<.5ex>[l]^-{#4}}}
\def\id{\tn{id}}
\def\Top{{\bf Top}}
\def\Cat{{\bf Cat}}
\def\Type{{\bf Type}}
\def\Set{{\bf Set}}
\def\Qry{{\bf Qry}}
\def\set{{\text \textendash}{\bf Set}}
\def\bD{{\bf \Delta}}
\def\dispInt{\parbox{.1in}{$\int$}}
\def\bhline{\Xhline{2\arrayrulewidth}}
\def\bbhline{\Xhline{2.5\arrayrulewidth}}
\def\bbbhline{\Xhline{3\arrayrulewidth}}
\def\colim{\mathop{\tn{colim}}}
\def\hocolim{\mathop{\tn{hocolim}}}
\def\mcC{\mc{C}}
\def\mcG{\mc{G}}
\def\mcX{\mc{X}}
\def\undsc{\rule{2mm}{0.4pt}}
\newtheorem{theorem}{Theorem}[subsection]
\newtheorem{lemma}[theorem]{Lemma}
\newtheorem{proposition}[theorem]{Proposition}
\newtheorem{corollary}[theorem]{Corollary}
\theoremstyle{remark}
\newtheorem{remark}[theorem]{Remark}
\newtheorem{example}[theorem]{Example}
\newtheorem{question}[theorem]{Question}
\newtheorem{guess}[theorem]{Guess}
\theoremstyle{definition}
\newtheorem{definition}[theorem]{Definition}
\def\Prb{{\bf Prb}}
\def\Prbs{{\wt{\bf Prb}}}
\def\Sch{{\bf Sch}}
\let\c@figure\c@equation\makeatother
\newcommand{\mainCatLarge}[1]{ 
	\stackrel{#1}{
		\parbox{4.5in}{\fbox{\parbox{4.4in}{\begin{center}\underline{{\tt Employee} manager worksIn $\simeq$ {\tt Employee} worksIn}\hsp  \underline{{\tt Department} secretary worksIn $\simeq$ {\tt Department}}\end{center}~\\\\\\
			\xymatrix@=8pt{&\LTO{Employee}\ar@<.5ex>[rrrrr]^{\tn{worksIn}}\ar@(l,u)[]+<5pt,10pt>^{\tn{manager}}\ar[dddl]_{\tn{first}}\ar[dddr]^{\tn{last}}&&&&&\LTO{Department}\ar@<.5ex>[lllll]^{\tn{secretary}}\ar[ddd]^{\tn{name}}\\\\\\\LTO{FirstNameString}&&\LTO{LastNameString}&~&~&~&\LTO{DepartmentNameString}
			}
		}}}
	}
}
\begin{document}

\title{Database queries and constraints via lifting problems}

\author{David I. Spivak}

\address{Department of Mathematics, Massachusetts Institute of Technology, Cambridge MA 02139}

\email{dspivak@math.mit.edu}

\thanks{This project was supported by ONR grant N000141010841.}

\begin{abstract}

Previous work has demonstrated that categories are useful and expressive models for databases. In the present paper we build on that model, showing that certain queries and constraints correspond to lifting problems, as found in modern approaches to algebraic topology. In our formulation, each so-called SPARQL graph pattern query corresponds to a category-theoretic lifting problem, whereby the set of solutions to the query is precisely the set of lifts. We interpret constraints within the same formalism and then investigate some basic properties of queries and constraints. In particular, to any database $\pi$ we can associate a certain derived database $\Qry(\pi)$ of queries on $\pi$. As an application, we explain how giving users access to certain parts of $\Qry(\pi)$, rather than direct access to $\pi$, improves ones ability to manage the impact of schema evolution.

\end{abstract}

\maketitle

\tableofcontents

\section{Introduction}\label{sec:intro}

In \cite{DK}, \cite{JoM}, \cite{JRW}, and many others, a tight connection between database schemas and the category-theoretic notion of sketches was presented and investigated. This connection was carried further in \cite{Sp2} where the existence of three data migration functors was shown to follow as a simple consequence of using categories rather than sketches to model schemas. In this paper we shall show that a modern approach to the study of algebraic topology, the so-called {\em lifting problem} approach (see \cite{Qui}), provides an excellent model for typical queries and constraints (see \cite{PS}).

A database consists of a schema (a layout of tables in which so called {\em foreign key} columns connect one table to another) and an instance (the rows of actual data conforming to the chosen layout). One can picture the analogy between databases and topological spaces as follows. Imagine that a collection of data $I$ and a schema $S$ are each an abstract space, and suppose we have a projection from $I$ to $S$. That is, we have some kind of continuous map $\pi\taking I\to S$ from a ``data bundle" $I$ to a ``base space" $S$. Points in $S$ represent tables, and paths in $S$ represent foreign key columns (or iterates thereof), which point from one table to another. Over every point $s\in S$ in the base space, we can look at the corresponding fiber $\pi^{-1}(s)\ss I$ of the data bundle; this will correspond to the set of rows in table $s$. The map $\pi$, associating data with schema, is called a {\em database instance}.

A {\em query} on a database instance $\pi\taking I\to S$ is like a system of equations: it includes an organized collection of knowns and unknowns. In our model a query takes the form of a functor $m\taking W\to R$, such that $W$ (standing for WHERE-clause) corresponds to the set of knowns, each of which maps to a specific value in the data bundle $I$, and such that the relationship between knowns and unknowns is captured in a schema $R$. More precisely, a query on the database instance $\pi\taking I\to S$ is presented as a commutative diagram to the left, which would be roughly translated into the pseudo-SQL to the right,\footnote{A more general SQL query, with a specific SELECT statement will be discussed in Example \ref{ex:SQL statement}.} in (\ref{dia:lifting SQL}): 
\begin{align}\label{dia:lifting SQL}
\parbox{.8in}{\xymatrix{W\ar[r]^p\ar[d]_m&I\ar[d]^\pi\\R\ar[r]_n&S.}}
\hspace{1in}
\parbox{1.4in}{
\begin{tabbing}
SELECT\;\; \=$\ast$\\
FROM \>$R\To{n}S$\\
WHERE \>$R\From{m}W\To{p}I$
\end{tabbing}}
\end{align} 
A {\em result} to the query is any mapping $\ell\taking R\to I$ making both triangles commute ($\ell\circ m=p, \pi\circ\ell=n$) in the diagram \begin{align}\label{dia:intro lifting}\xymatrix{W\ar[r]^p\ar[d]_m&I\ar[d]^\pi\\R\ar[ur]^\ell\ar[r]_n&S.}\end{align} The map $\ell$ is called a {\em lift} of Diagram \ref{dia:lifting SQL}, hence the term {\em lifting problem}. The idea is that a lift is a way to fill the result schema $R$ with conforming data from the instance $\pi$.

We will now give a simple example from algebraic topology to strengthen the above image. By connecting databases and topology, we not only can visualize queries in a new way, but it is conceivable that algebraic topologists could use database interfaces to have computers work on lifting problems that arise in their research. Regardless, after the topological example, we will ground the discussion with an example database query.

Consider an empty sphere, defined by the equation $x^2+y^2+z^2=1$; call it $I$. We project it down onto the $(x,y)$-coordinate plane ($z=0$); call that plane $S\iso\RR^2$. The sphere $I$ serves as the database instance and the plane $S$ serves as the schema. A query consists of some result schema mapping to the plane $S$, say a solid disk $R$ (given by $z=0, x^2+y^2\leq1$), together with some constraints, say on the boundary circle $W$ (given by $z=0, x^2+y^2=1$) of the disk. Graphically we have Figure \ref{sphere-top}.

\begin{flushleft}\setcounter{figure}{\value{equation}}
\begin{figure}
\caption{A topological lifting problem} 
\label{sphere-top}
\includegraphics[height=9.5cm]{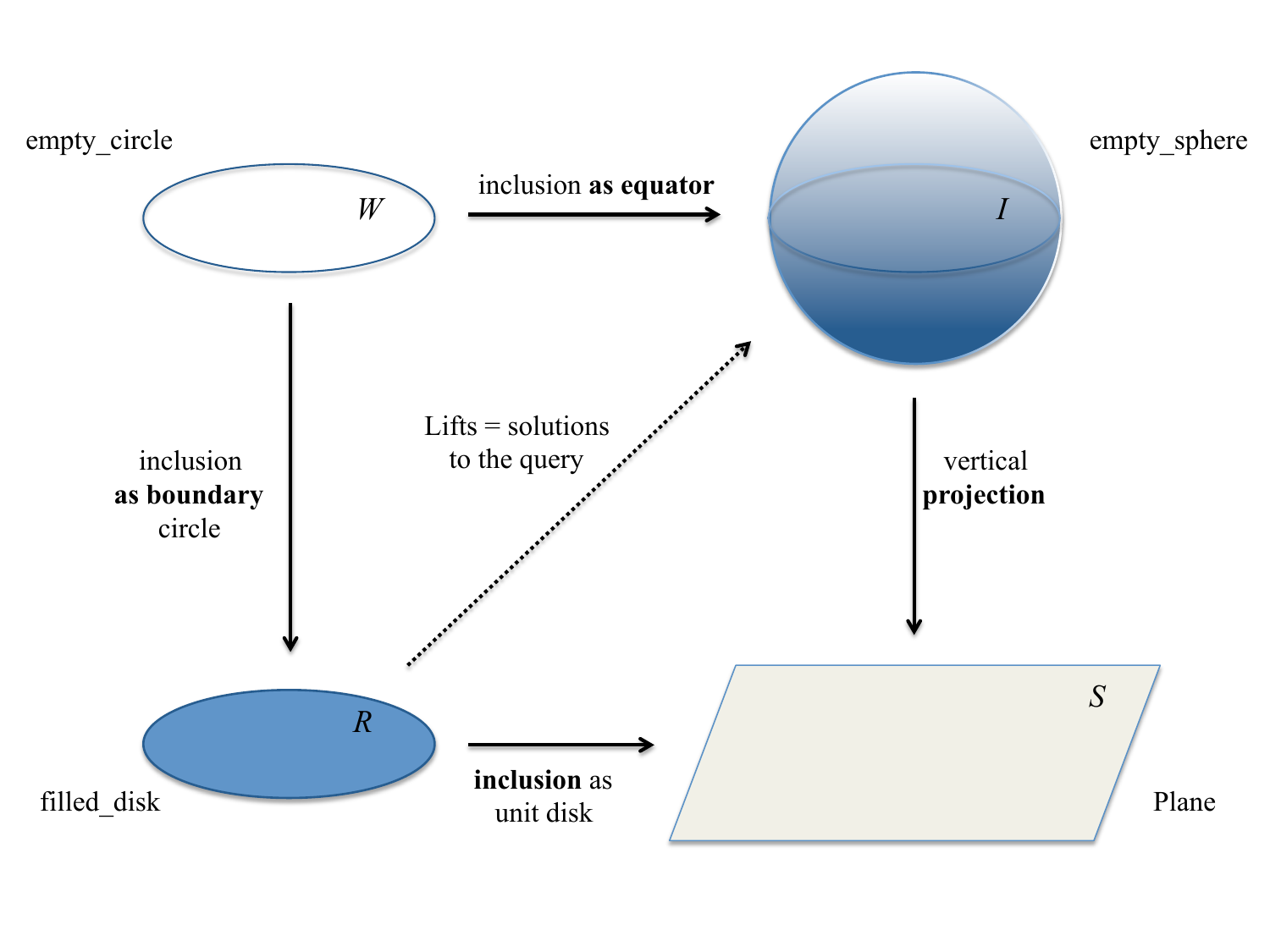} 
\end{figure}
\end{flushleft} 
The results of the lifting query from Figure \ref{sphere-top} are the mappings $R\to I$ making the diagram commute. Under the guidance of (\ref{dia:lifting SQL}) the query would look something like this: 
\begin{tabbing}\label{dia:SQL of sphere-top}
\hspace{.52in}\=SELECT \=$\ast$\\
\>FROM \>filled\undsc disk inclusion\\
\>WHERE \>empty\undsc circle as boundary\;=\;empty\undsc circle as equator
\end{tabbing}
Topologically one checks that there are exactly two lifts---the top hemisphere and the bottom hemisphere---so our pseudo-SQL query above would return exactly two results.

\subsection{Main example of a lifting query}\label{sec:main example}

We now provide an example of a situation in which one may wish to query a database, and we show that this query naturally takes the structure of a lifting problem. We break a single example into three parts for clarity.

\begin{example}[Main Example 1: Situation, SPARQL, and schema]\label{ex:query party setup}

Suppose you have just come home from a party. There, you met and really hit it off with a married couple; the husband's name is Bob and the wife's name is Sue; they live in Cambridge. From your conversation, you know that Bob works at MIT and Sue works in the financial sector. You'd like to see them again, but you somehow forgot to ask for their contact information; in particular you'd like to know their last names. 

This is a typical database query problem. It can be phrased as the following SPARQL graph pattern query (which we arrange in two columns for space and readability reasons):
 \begin{align}\label{dia:SPARQL}
 \begin{tabular}{lll}
(?marriage includesAsHusband ?b) &\hsp& (?marriage includesAsWife ?s)\\
(?b hasFirstName Bob)&&(?s hasFirstName Sue)\\
(?b livesIn Cambridge)&&(?s livesIn Cambridge)\\
(?employedb is ?b)&&(?employeds is ?s)\\
(?employedb hasEmployer MIT)&&(?employeds hasEmployer ?sueEmp)\\
&&(?sueEmp isIn financial)\\
(?b hasLastName ?bobLast)&&(?s hasLastName ?sueLast)
\end{tabular}\end{align}

The query in (\ref{dia:SPARQL}) might be asked on the following database schema:\footnote{The schema $S$ in (\ref{dia:schema bobsue}) deliberately includes a box $D$ and an arrow $G\to D$ that are not part of our query (\ref{dia:SPARQL}).}
\begin{align}\label{dia:schema bobsue}
S:=\parbox{3.5in}{
\fbox{\xymatrix{
&\obox{G}{.7in}{a marriage}\ar[r]^-{\parbox{.5in}{\singlespace\scriptsize\begin{center} had wedding on\end{center}}}\LAL{dl}{includes as husband\hsp}\LA{dr}{\hsp includes as wife}&\obox{D}{.4in}{a date}\\
\obox{M}{.5in}{a man}\LA{dr}{is}&&\obox{W}{.6in}{a woman}\LAL{dl}{is}\\
\obox{E}{.8in}{an employed person}\LA{r}{is}\LA{d}{has employer}&\obox{P}{.6in}{a person}\LA{d}{lives in}\LA{r}{\parbox{.5in}{\singlespace\scriptsize\begin{center}has\\first name\end{center}}}\LA{dr}{\hspace{.15in}has last name}&\obox{F}{.7in}{a first name}\\
\obox{Y}{.8in}{an employer}\LA{d}{is in}&\obox{C}{.4in}{a city}&\obox{L}{.7in}{a last name}\\
\obox{T}{.6in}{a sector}}}}
\end{align} Given that $S$ is instantiated with data $\pi\taking I\to S$, one can hope to find Bob and Sue, and then determine their last name. In the following two examples (Examples \ref{ex:query party 1} and \ref{ex:query party 2}) we will show that this query corresponds to a lifting problem for $\pi$. 

\end{example}

\begin{example}[Main Example 2: WHERE-clause and Result schema]\label{ex:query party 1}

Recall the SPARQL query presented as (\ref{dia:SPARQL}) in Example \ref{ex:query party setup}, in which we wanted to find information about our new friends Bob and Sue. We will use a lifting problem to state this query; to do so we need to come up with a result schema $R$, a constraint schema (a set of knowns) $W$, and a mapping $m\taking W\to R$ embedding the known objects into the result schema. In this example we will present $m, W,$ and $R$. In Example \ref{ex:query party 2} we will explain the lifting diagram for the query and show the results. 

In order to find our friends Bob and Sue, we will use the following mapping: 
\begin{align*}
\tiny\parbox{5in}{
\begin{center}W:=\end{center}
\fbox{\xymatrix@=14pt{\obox{Y1}{.2in}{MIT}&~&&\hspace{.15in}&&\obox{T}{.4in}{financial sector}\\~&~&&&&~&~\\\\\obox{F1}{.2in}{Bob}&\hsp&\obox{C1}{.5in}{Cambridge}&~&\obox{C2}{.5in}{Cambridge}&\hsp&\obox{F2}{.2in}{Sue}}}
}\\
\parbox{2.6in}{$\xymatrix{~\ar[d]^m\\~}$}\\
\tiny\parbox{5in}{
\begin{center}R:=\end{center}
\fbox{\xymatrix@=14pt{\obox{Y1}{.3in}{an employer}&&&\obox{G}{.3in}{a marriage}\LAL{dll}{includes as husband\hsp}\LA{drr}{\hsp includes as wife}&&\obox{T}{.4in}{a sector}&\obox{Y2}{.3in}{an employer}\LAL{l}{is in}\\\obox{E1}{.3in}{an employed person}\LA{r}{is}\LAL{u}{has}&\obox{P1}{.3in}{a person}\LA{d}{has}\LA{dr}{lives in}\LAL{dl}{has}&&&&\obox{P2}{.3in}{a person}\LA{d}{has}\LA{dr}{has}\LAL{dl}{lives in}&\obox{E2}{.3in}{an em-ployed person}\LAL{l}{is}\LAL{u}{has}\\\obox{F1}{.3in}{a first name}&\obox{L1}{.3in}{a last name}&\obox{C1}{.3in}{a city}&&\obox{C2}{.3in}{a city}&\obox{L2}{.3in}{a last name}&\obox{F2}{.3in}{a first name}}}
}
\end{align*} 
The functor $m\taking W\to R$ is indicated by sending each object in $W$ to the object with the same label in $R$; e.g. \fakebox{MIT} in $\Ob(W)$ is sent to \fakebox{an employer} in $\Ob(R)$ because they are both labeled $Y1$. 

To orient oneself, we suggest the following. Count the number of constants in the SPARQL query (\ref{dia:SPARQL})---there are 6 (such as Bob, Cambridge, etc.); this is precisely the number of objects in $W$. Count the combined number of constants and variables in the SPARQL query---there are 14 (there are 8 variables, such as ?marriage, ?empoyedb, etc.); this is precisely the number of objects in $R$. Finally, count the number of triples in the SPARQL query -- there are 13; this is precisely the number of arrows in $R$. These facts are not coincidences.

\end{example}

\begin{example}[Main Example 3: Lifting diagram and result set]\label{ex:query party 2}

In Example \ref{ex:query party 1} we showed a functor $m\taking W\to R$ corresponding to the SPARQL query stated in (\ref{dia:SPARQL}). In this example we will explain how this query can be formulated as a lifting problem of the form \begin{align}\label{dia:qp2 lifting}\xymatrix{W\ar[r]^p\ar[d]_m&I\ar[d]^\pi\\R\ar@{-->}[ur]^\ell\ar[r]_n&S}\end{align} which serves to pose our query to the database instance $\pi$. At this point we can ask for the set of solutions $\ell$. So far, $W,m,R,$ and $S$ have been presented, $I$ and $\pi$ have been assumed, and the set of $\ell$'s is coming later, so it suffices to present $p$ and $n$. 

One should refer to our presentation of $S$ in Example \ref{ex:query party setup} (\ref{dia:schema bobsue}). The functor $n\taking R\to S$ should be obvious from our labeling system (for example, the object E1=\fakebox{an employed person} in category $R$ is mapped to the object E=\fakebox{an employed person} in category $S$). Note that, as applied to objects, $n$ is neither injective nor surjective in this case: $n^\m1(P)=\{P_1,P_2\}$ and $n^\m1(D)=\emptyset$.

Suppose $\pi\taking I\to S$ is our data bundle, and assume that it contains enough data that the constants in the query have unique referents.
\footnote{This use of the term ``query" is not standard. See Sections \ref{sec:queries on database} and \ref{sec:binding variables} for an explanation.}
There is an obvious functor $p\taking W\to I$ that sends each object in category $W$ to its referent in $I$. For example, we assume that there is an object in $I$ labelled \fakebox{MIT}, which is mapped to by the object Y1=\fakebox{MIT} in $W$.

Thus our query from (\ref{dia:SPARQL}) is finally in the form of a lifting problem as in (\ref{dia:qp2 lifting}). We will show in Example \ref{ex:bob and sue revisited}, after we have built up the requisite theory, that the set of lifts can be collected into a single table, the most useful projection of which would look something like this: 
\begin{align}\label{dia:result state}
\footnotesize
\begin{tabular}{| c ||| c || c | c | c | c || c | c | c |}
\bhline
\multicolumn{9}{| c |}{\bf Marriage}\\
\bhline 
\multirow{2}{*}{\bf ID}&\multicolumn{4}{ c |}{\bf Husband}&\multicolumn{4}{ c |}{\bf Wife}\\\cline{2-9}\cline{2-9}
&{\bf ID}&{\bf First}&{\bf Last}&{\bf City}&{\bf ID}&{\bf First}&{\bf Last}&{\bf City}\\
\bbbhline G3801&M881-36&Bob&Graf&Cambridge&W913-55&Sue&Graf&Cambridge\\
\bhline
\end{tabular}
\end{align}

This concludes the tour of our main example: we have shown a typical query formulated as a lifting problem. The mathematical basis for the above ideas will be presented in Section \ref{sec:queries as LPs}.

\end{example}

\subsection{Relation to earlier work}

As mentioned above, there is a long history of applying category-theoretic formalism to database theory. These approaches can roughly be broken into two schools. The first school, including \cite{Tui} and \cite{Kat}, considers relational database tables as sets of attributes, using limits to discuss joins. This approach is similar to that used in \cite{Sp1}, in which simplicial sets were used as a geometric model for ``sheaves of attributes". The second school, including \cite{DK}, \cite{JoM}, and \cite{JRW}, uses so-called sketches in the sense of \cite{Ehr}. The latter approach is closer to that in \cite{Sp1} and the present article. We now discuss the key differences between this approach and the present one.

As we will discuss in Section \ref{sec:review ctdb}, we model database schemas as categories, whereas the second school above models them as sketches. A sketch is a category together with specified limit cones and colimit cones. Sketches are more expressive than categories, for example allowing schemas to convey when the set of rows in table $T$ is the product of the sets of rows in tables $U$ and $V$. This expressivity comes at a cost: whereas the categorical model in \cite{Sp2} has three built in data migration functors corresponding to project, union, and join of queries, the sketch model has only one: project. In other words, being able to specify limits and colimits in a given schema limits ones ability to translate data given a morphism of schemas, either in the case of schema evolution or in the case of comparison with a schema built by another group. 

Still, it may be useful to find something in between sketches and bare categories, because using categories as models does not allow one to express constraints beyond foreign keys and commutative diagrams. For example it does not allow for injectivity, or ``is a", constraints. It is here that the present paper fits in. Modern mathematical research, especially algebra and topology, has found very little use for sketches and sketch morphisms, whereas it has become deeply invested in categories and functors. Further, algebraic topology, the trailblazer for category theory, has for more than half a century found lifting problems to be a key tool for investigating abstract spaces. In this paper we make the connection between these ideas and databases. As mentioned above, we show that there are many constraints that are well-phrased as lifting problems, and that queries also fit nicely into this framework. 

Sketches are often divided into two types: limit sketches and colimit sketches. Lifting constraints fully cover the expressivity of limit sketches and more; see Section \ref{sec:lifting more expressive}. In particular, lifting problems can enforce injectivity constraints,  as shown in Example \ref{ex:const inj}. However, colimit sketches can express things than lifting constraints cannot. For example, with colimit sketches one can express set-theoretic complements, and this cannot be done with lifting problems. The ability to enforce that one subset is the complement of another comes with well-known problems. The point is that lifting problems can express a different class of constraints than that expressible by sketches, and Section \ref{sec:examples} is designed to show that the set of constraints expressible by lifting problems may be useful.

\subsection{Purpose of the paper}

The purpose of this paper is to:
\begin{itemize}
\item provide an efficient mathematical formulation of common database queries (modeling both SQL and SPARQL styles),
\item attach a geometric image to database queries that can be useful in conceptualization, and
\item explore theory and applications of the derived database schema $\Qry(\pi)$ of queries on a database instance $\pi$, and the derived instance of results.
\end{itemize}  

We include several mathematical results that are well-known to experts, for the purpose of aiding those interested in using this paper to bridge the gap between database theory and category theory. 

\subsection{Plan of the paper}

We begin in Section \ref{sec:review} with a review of the categorical approach to databases (see \cite{Sp2} for more details). Roughly this correspondence goes by the following slogan: ``schemas are categories, instances are set-valued functors". In Section \ref{sec:discrete opfibrations} we also discuss the Grothendieck construction, which will be crucial for our approach: a database instance can be converted into a so-called discrete opfibration, which we will later use extensively to make the parallel with algebraic topology and lifting problems in particular. 

In Section \ref{sec:constraints} we define constraints on a database in terms of lifting conditions and discuss some constraint implications. We give several examples to show how various common existence and uniqueness constraints (such as the constraint that a given foreign key column is surjective) can be framed in the language of lifting conditions. In Section \ref{sec:queries as LPs} we discuss queries as lifting problems, and review the paper's main example. In Section \ref{sec:formal properties}, we show that the information in a given database instance can be collected into a new, derived database. This derived database of queries and their results can be queried, giving rise to nested queries. We explain how this formulation can be useful for managing the impact of schema evolution. Finally in Section \ref{sec:future} we briefly discuss some possible directions for future work, including tying in to Homotopy Type Theory (in the sense of \cite{Awo} and \cite{Voe}) and other projects.

\subsection{Notation and terminology}\label{sec:notation}

For any natural number $n\in\NN$, let $\ul{n}$ denote the set $\{1,2,\ldots,n\}$. We sometimes regard sets as discrete categories without mentioning it. Note that $\ul{0}=\emptyset$. Let $[n]$ denote the linear order $0\leq 1\leq\ldots\leq n$. We sometimes regard orders as categories without mentioning that either. In particular $\ul{1}$ is the terminal category; it has one object and one morphism (the identity). 

Given any category $\mcC$, we denote the category of all functors $\mcC\to\Set$ by $\mcC\set$. The terminal object in $\mcC\set$ sends each object in $\mcC$ to $\ul{1}$; we denote it by $\ul{1}^{\mcC}\taking\mcC\to\Set$. For any category $\mcC$, there is a one-to-one correspondence between the objects in $\mcC$ and the functors $\ul{1}\to\mcC$. Thus we may denote an object $c\in\Ob(\mcC)$ by a functor $\ul{1}\To{c}\mcC$. In particular, we elide the difference between a set and a functor $\ul{1}\to\Set$. 

We draw schemas in one of two ways. When trying to save space, we draw our objects as concisely-labeled nodes and our morphisms as concisely-labeled arrows; when trying to be more expressive, we draw our objects as text boxes and put as much text in them (and on each arrow) as is necessary to be clear (see \cite{SK}). For example, we might draw the indexing category for directed graphs in either of the following two ways: $$\fbox{\xymatrix{\LMO{E}\ar@/^1pc/[rr]^s\ar@/_1pc/[rr]_t&&\LMO{V}}}\hsp\hsp\fbox{\xymatrix{\obox{}{.5in}{an edge}\ar@/^1pc/[rr]^{\tn{has as source}}\ar@/_1pc/[rr]_{\tn{has as target}}&&\obox{}{.5in}{a vertex}}}$$ When in the typographical context of inline text we are discussing an object that has been elsewhere displayed as a textbox (such as \fbox{an edge}), we may represent it with corner symbols (e.g. as \fakebox{an edge}) to avoid various spacing issues that can arise.

Given two categories, there are generally many functors from one to the other; however, if the objects and arrows are labeled coherently, there are many fewer functors that roughly respect the labelings. We will usually be explicit when defining functors, but we will also take care that our functors respect labeling to the extent possible. 

\subsubsection{``Queries on a database"}\label{sec:queries on database}

In wide-spread terminology for database queries, a query cannot depend on the current instance $\pi$ of the database, but instead only on the schema $S$. This is perfectly reasonable for theoretical and practical reasons. Often in applications, however, one uses what is known as a {\em cursor}, which is basically a pre-defined query consisting of a join-graph and a set of variables to be bound at run-time. With respect to the diagram
$$
\xymatrix{W\ar[r]^p\ar[d]_m&I\ar[d]^\pi\\R\ar@{-->}[ur]^\ell\ar[r]_n&S.}
$$
the join-graph is $R$, the set of variables waiting to be bound is $W$, and the binding itself is $p\taking W\to I$. The mathematics will be covered more extensively in Section \ref{sec:binding variables}; in the remaining paragraphs of Section \ref{sec:queries on database}, we hope to get across how one might connect our use of the term ``query" in the present paper to common ideas in database systems.

In applications, a query wizard may run the cursor in a 2-step query process: first it will query the database to offer the user a drop-down menu of choices in the active domain of each variable. The user will choose a row to which the variable will be bound (once for each variable). At this point the program will apply the actual query declared by the cursor. This two step process corresponds to searching for possible functors $p\taking W\to I$ and then searching for lifts $\ell$. 

Throughout this article, when we speak of queries on a database, we mean queries for which the constant variables have been bound to elements in the active domain of a given instance. However, as we will see in Section \ref{sec:binding variables}, one can also use the same machinery in cursor-like fashion to pose queries in which variable values have been chosen without regard for whether or not they are in the active domain. In other words, we will see that what can be accomplished by queries in the sense of traditional relational database theory fits easily into our framework. Because it works either way, we thought that the unusual terminology ``queries on a database" would be best because it neither lulls the reader into thinking that these gadgets are completely instance-independent, nor frightens the reader into thinking that the instance must be known in advance for the ideas here to work. 

\subsection{Acknowledgments}

I would like to thank Peter Gates, as well as Henrik Forssell, Rich Haney, Eric Prud'hommeaux, and Emily Riehl for many useful discussions.

\section{Elementary theory of categorical databases}\label{sec:review}

\subsection{Review of the categorical description of databases}\label{sec:review ctdb}

The basic mantra is that a database schema is a small category $S$ and an instance is a functor $\delta\taking S\to\Set$, where $\Set$ is the category of sets.\footnote{If one prefers, $\Set$ can be replaced by the category of finite sets or by the category Types for some $\lambda$-calculus.} To recall these ideas, we take liberally from \cite{Sp2}, though more details and clarification are given there. Readers who are familiar with the basic setup and data migration functors can skip to Section \ref{sec:discrete opfibrations}.

In \cite{Sp2} a category $\Sch$ of categorical schemas and translations is defined and an equivalence of categories 
\begin{align}\label{dia:equivalence Sch Cat}
\Sch\simeq\Cat
\end{align} 
is proved, where $\Cat$ is the category of small categories. The difference between $\Sch$ and $\Cat$ is that an object of the former is a {\em chosen presentation} of a category, by generators and relations, as described below. Given the equivalence (\ref{dia:equivalence Sch Cat}), we can and do elide the difference between schemas and small categories.

Roughly, a schema $S$ consists of a graph $G$ together with an equivalence relation on the set of paths of $G$. Each object $s\in\Ob(S)$ represents a table (or more precisely the ID column of a table), and each arrow $s\to t$ emanating from $s$ represents a column of table $s$, taking values in the ID column of table $t$. An example should clarify the ideas.

\begin{example}

As a typical database example, consider the bookkeeping necessary to run a department store. We keep track of a set of employees and a set of departments. For each employee $e$, we keep track of
\begin{enumerate}[\hsp E.1\;]
\item the {\bf first} name of $e$, which is a {\tt FirstNameString},
\item the {\bf last} name of $e$, which is a {\tt LastNameString},
\item the {\bf manager} of $e$, which is an {\tt Employee}, and
\item the department that $e$ {\bf works in}, which is a {\tt Department}.
\end{enumerate}
For each department $d$, we keep track of
\begin{enumerate}[\hsp D.1\;]
\item the {\bf name} of $d$, which is a {\tt DepartmentNameString}, and
\item the {\bf secretary} of $d$, which is an {\tt Employee}.
\end{enumerate}
Suppose further that we make the following two rules. 
\begin{enumerate}[\hsp Rule 1\;]
\item For every employee $e$, the {\bf manager} of $e$ {\bf works in} the same department that $e$ {\bf works in}.
\item For every department $d$, the {\bf secretary} of $d$ {\bf works in} department $d$.
\end{enumerate}

This is all captured neatly, with nothing left out and nothing else added, by the category presented below:
\begin{align}\label{dia:basic cat}S:=\mainCatLarge{}\end{align}
The underlined statements at the top indicate pairs of commutative (i.e. equivalent) paths; each path is indicated by its source object followed by the sequence of arrows that composes it. The objects, arrows, and equivalences in $S$ correspond to the tables, columns, and rules laid out at the beginning of this example.

The collection of data on a schema is typically presented in table form. Display (\ref{dia:instance on maincat}) shows how a database with schema $S$ might look at a particular moment in time. 
\begin{align}\label{dia:instance on maincat}
&\footnotesize
\begin{tabular}{| l || l | l | l | l |}\bhline
\multicolumn{5}{| c |}{{\tt Employee}}\\\bhline 
{\bf ID}&{\bf first}&{\bf last}&{\bf manager}&{\bf worksIn}\\\bbhline 101&David&Hilbert&103&q10\\\hline 102&Bertrand&Russell&102&x02\\\hline 103&Alan&Turing&103&q10\\\bhline
\end{tabular}&\hsp\footnotesize
\begin{tabular}{| l || l | l |}\bhline
\multicolumn{3}{| c |}{{\tt Department}}\\
\bhline {\bf ID}&{\bf name}&{\bf secretary}\\\bbhline q10&Sales&101\\\hline x02&Production&102\\\bhline
\end{tabular}
\end{align}\vspace{.1in}
\begin{align*}\footnotesize
\begin{tabular}{| l ||}\bhline
\multicolumn{1}{| c |}{{\tt FirstNameString}}\\\bhline
{\bf ID}\\\bbhline Alan\\\hline Alice\\\hline Bertrand\\\hline Carl\\\hline David\\\hline\hspace{.25in}\vdots\\\bhline
\end{tabular}\hspace{.6in}\footnotesize
\begin{tabular}{| l ||}\bhline
\multicolumn{1}{| c |}{{\tt LastNameString}}\\\bhline
{\bf ID}\\\bbhline Arden\\\hline Hilbert\\\hline Jones\\\hline Russell\\\hline Turing\\\hline\hspace{.25in}\vdots\\\bhline
\end{tabular}\hspace{.6in}\footnotesize
\begin{tabular}{| l ||}\bhline
\multicolumn{1}{| c |}{{\tt DepartmentNameString}}\\\bhline
{\bf ID}\\\bbhline Marketing\\\hline Production\\\hline Sales\\\hline\hspace{.25in}\vdots\\\bhline
\end{tabular}
\end{align*}
Every table has an ID column, and in every table each cell references a cell in the ID column of some table. For example, cells in the secretary column of the {\tt Department} table refer to cells in the ID column of the {\tt Employee} table. Finally, one checks that Rule 1 and Rule 2 hold. For example, let $e$ be Employee 101. He works in Department q10 and his manager is Employee 103. Employee 103 works in Department q10 as well, as required. The point is that the data in (\ref{dia:instance on maincat}) conform precisely to the schema $S$ from Diagram (\ref{dia:basic cat}).

A set of tables that conforms to a schema is called an {\em instance} of that schema. Let us denote the set of tables from (\ref{dia:instance on maincat}) by $\delta$; we noted above that $\delta$ conforms with, thus is an instance of, schema $S$. Mathematically, $\delta$ can be modeled as a functor $$\delta\taking S\to\Set.$$ To each object $s\in S$ the instance $\delta$ assigns a set of row-IDs, and to each arrow $f\taking s\to t$ in $S$ it assigns a function, as specified by the cells in the $f$-column of $s$.

\end{example}

\subsection{Review of data migration functors}

Once we realize that a database schema can be captured simply as a category $S$ and each instance on $S$ as a set-valued functor $\delta\taking S\to\Set$, classical category theory gives ready-made tools for migrating data between different schemas. The first definition we need is that of schema mapping.

\begin{definition}

Let $S$ and $T$ be schemas (i.e. small categories). A {\em schema mapping} is a functor $F\taking S\to T$.

\end{definition}

Thus a schema mapping assigns to each table in $S$ a table in $T$, to each column in $S$ a column in the corresponding table of $T$, and all this in such a way that the path equivalence relation is preserved.

\begin{definition}\label{def:migration}

A schema mapping $F\taking S\to T$ induces three functors on instance categories, which we call {\em the data migration functors associated to $F$} and which we denote by $\lpush{F},\pullb{F},$ and $\rpush{F}$, displayed here: $$\xymatrix{S\set\ar@/^1pc/[rr]^{\lpush{F}}\ar@/_1pc/[rr]_{\rpush{F}}&&T\set.\ar[ll]|-{\pullb{F}}}$$ The functor $\pullb{F}\taking T\set\to S\set$ sends an instance $\delta\taking T\to\Set$ to the instance $\delta\circ F\taking S\to\Set$. The functor $\lpush{F}$ is the left adjoint to $\pullb{F}$, and the functor $\rpush{F}$ is the right adjoint to $\pullb{F}$. We call $\pullb{F}$ the {\em pullback along $F$} , we call $\lpush{F}$ the {\em left pushforward along $F$}, and we call $\rpush{F}$ the {\em right pushforward along $F$}. 

\end{definition}

The functors $\Delta_F,\Sigma_F,$ and $\Pi_F$ are well-known in category theory literature, where the latter two are often referred to as the {\em left Kan extension along $F$} and the {\em right Kan extension along $F$} (see \cite[X]{Mac}). In databases, the left pushforward $\Sigma_F$ will generally correspond to unions and quotients, and the right pushforward $\Pi_F$ will generally correspond to products and joins. We explore these ideas a bit further in Section \ref{sec:formal properties}; see \cite{Sp2} for further explanation.

\subsection{RDF via the Grothendieck construction}\label{sec:discrete opfibrations}

There is a well-known construction that associates to a functor $\delta\taking S\to\Set$, a pair $(\int(\delta),\pi_\delta)$ where $\int(\delta)\in\Cat$ is a new category, called {\em the category of elements of $\delta$}, and $\pi_\delta\taking\int(\delta)\to S$ is a functor. It is often called the {\em Grothendieck construction}. The objects and morphisms of $\int(\delta)$ are given as follows \begin{align*}\Ob(\dispInt(\delta))&:=\Big\{(s,x)\;|\;s\in\Ob(S),x\in\delta(s)\Big\}\\\Hom_{\int(\delta)}((s,x),(s',x'))&:=\Big\{f\taking s\to s'\;|\;\delta(f)(x)=x'\Big\}\end{align*} The functor $\pi_\delta\taking\int(\delta)\to S$ is straightforward: it sends the object $(s,x)$ to $s$ and sends the morphism $f\taking(s,x)\to(s',x')$ to $f\taking s\to s'$.

We call the pair $(\int(\delta),\pi_\delta)$ the {\em discrete opfibration associated to} $\delta$. We will see in the next section (Definition \ref{def:dofib}) that $\pi_\delta$ is indeed a kind of fibration of categories. This construction, and in particular the category $\int(\delta)$, is also nicely connected with the {\em resource descriptive framework} (see \cite{PS}), in which data is captured in so-called RDF triples. Indeed, the arrows $\LMO{s}\To{\;p\;}\LMO{b}$ of $\int(\delta)$ correspond one-for-one with these RDF triples ({\bf s}ubject, {\bf p}redicate, o{\bf b}ject). Thus we have shown a readymade conversion from relational databases to RDF triple stores via the Grothendieck construction. An example should clarify this discussion.

\begin{example}\label{ex:discrete opfibration}

Recall the database instance $\delta\taking S\to\Set$ given by the tables in Diagram (\ref{dia:instance on maincat}), whose schema $S$ was presented as Diagram (\ref{dia:basic cat}).  Applying the Grothendieck construction to $\delta\taking S\to\Set$, we get a category $I:=\int(\delta)$ and a functor $\pi:=\pi_\delta$ as in Figure \ref{fig:Grothendieck}.

\begin{figure}[h]
\hspace{0in}I=\parbox{3.9in}{
\fbox{
\xymatrix@=.1pt{
&\LTO{101}\ar@/_2.5pc/[ddddl]+<-3pt,3pt>_{\tin{first}}\ar@/_1.5pc/[ddrrr]+<-4pt,0pt>^{\tin{last}}\ar@/_1pc/[rr]+<-3pt,-3pt>_-{\tin{manager}}\ar@/^1.8pc/[rrrrrr]^{\tin{worksIn}}&\LTO{\;\;102}&\LTO{103}&&&&\LTO{q10}&\LTO{x02}\ar@/^1.6pc/[llllll]+<4pt,-2pt>_{\tin{secretary}}\ar@/^1.2pc/[lddd]+<6pt,2pt>^(.4){\tin{name}}\\\parbox{.1in}{~\\\vspace{.6in}~}\\\LTO{Alan}&&&&\LTO{Hilbert}&\hspace{.4in}&\hspace{.4in}&\LTO{Production}\\\LTO{\;\;Bertrand}&&&&\LTO{Russell}&&&\LTO{\hspace{-.1in}Sales}\\\LTO{\hspace{.2in}David}&&&&\LTO{Turing}&&&\LTO{Marketing}\\\LTO{Alice}&&&&\LTO{Arden}\\\LTO{Carl}&&&&\LTO{Smith}}}\\
\xymatrix{\hspace{1.6in}&\ar[d]^\pi\\&~}}\\
\nonumber S=\parbox{3.75in}{\hspace{.1in}\fbox{
			\xymatrix@=9pt{&\LTO{Employee}\ar@<.5ex>[rrrrr]^{\tn{worksIn}}\ar@(l,u)[]+<5pt,10pt>^{\tn{manager}}\ar[dddl]_{\tn{first}}\ar[dddr]^{\tn{last}}&&&\hspace{.3in}&&\LTO{Department}\ar@<.5ex>[lllll]^{\tn{secretary}}\ar[ddd]^{\tn{name}}\\\\\\\LTO{FNString}&&\LTO{LNString}&~&~&~&\LTO{DNString}
			}}}
\caption{An example of the Grothendieck construction, or category of elements, of a functor $\delta\taking S\to\Set$.. The functor $\pi\taking I\to S$ sends objects 101,102,103 in $I$ to the object {\tt Employee}, in $S$; it similarly sends the arrow labeled worksIn in $\int(\delta)$ to the arrow labeled worksIn in $S$, etc. In the tables in (\ref{dia:instance on maincat}), which represents our instance $\delta$, there are 16 non-ID cells, whereas in $I=\int(\delta)$, there are only six arrows drawn. The other ten arrows have been left out of the picture of $I$ (e.g. the arrow $\LTO{102}\Too{\tn{Last}}\LTO{Russell}$ is not drawn) for readability reasons. The point is that the RDF triple store associated to instance $\delta$ is nicely represented using the standard Grothendieck construction. For example, the arrow $\LTO{101}\Too{\text{first}}\LTO{David}$ represents the RDF triple (101 :first David).
}
\label{fig:Grothendieck}
\end{figure}
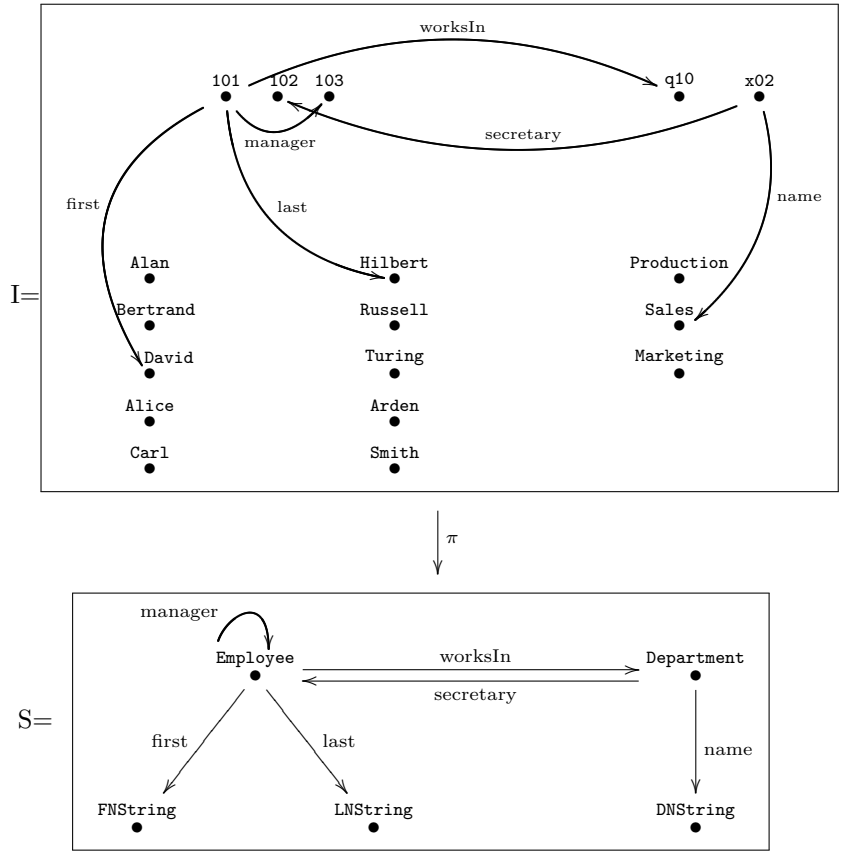

\end{example}

In the Introduction (Section \ref{sec:intro}), we discussed database instances in terms of mappings $\pi$, each from a data bundle $I$ to a base space $S$. We were referring to exactly the discrete opfibration picture in Figure \ref{fig:Grothendieck}. 

In Section \ref{sec:discrete ops} we will give a definition of discrete opfibrations in terms of lifting constraints (Definition \ref{def:dofib}). First, however, we attempt to understand a discrete opfibration $\pi\taking I\to S$ by considering its various fibers and their relationships. More precisely, given an object $s\in\Ob(S)$, we consider the fiber $\pi^\m1(s)$, and given a morphism $f\taking s\to s'$ in $S$ we consider how the fibers $\pi^\m1(s)$ and $\pi^\m1(s')$ relate. 

If $\pi\taking I\to S$ were not assumed to be a discrete opfibration but instead just a general functor, then all we would know about these various fibers would be that they are categories. But the first distinctive feature of a discrete opfibration is that the fiber $\pi^\m1(s)$ is a {\em discrete category}, i.e. a set, for each object $s\in S$; that is, there are no morphisms between different objects in a chosen fiber (see Proposition \ref{prop:discrete opfibrations are discrete}). The pre-image $\pi^\m1(f)$ of $f\taking s\to s'$ is a set of morphisms from objects in $\pi^\m1(s)$ to objects in $\pi^\m1(s')$. When $\pi$ is a discrete opfibration, there exists a unique morphism in $\pi^\m1(f)$ emanating from each object in $\pi^\m1(s)$, so the subcategory $\pi^\m1(f)\ss I$ can be cast as a single function $\pi^\m1(f)\taking\pi^\m1(s)\to\pi^\m1(s')$.

To recap, the discrete opfibration $\pi_\delta\taking\int(\delta)\to S$ of a set-valued functor $\delta\taking S\to\Set$ contains the same information as $\delta$ does, but a different perspective. We have $$\pi_\delta^\m1(s)\iso\delta(s) \hsp\tn{and}\hsp \pi_\delta^\m1(f)\iso\delta(f),$$ for any $s,s'\in\Ob(S)$ and $f\taking s\to s'$.

\setcounter{subsubsection}{\value{theorem}}
\subsubsection{Basic behavior of the Grothendieck construction}\addtocounter{theorem}{1}

Below are some simple results about the Grothendieck construction, all of which are well-known.

\begin{proposition}\label{prop:Grothendieck as pullback}

Let $\delta\taking S\to\Set$ be a functor. Then the Grothendieck construction $\int(\delta)\To{\pi_\delta} S$ of $\delta$ can be described as a pullback in the diagram of categories $$\xymatrix{\int(\delta)\ar[r]\ar[d]_{\pi_\delta}\ullimit&\Set_*\ar[d]^\pi\\S\ar[r]_\delta&\Set,}$$ where $\Set_*$ is the category of pointed sets and $\pi$ is the functor that sends a pointed set $(X,x\in X)$ to its underlying set $X$.

\end{proposition}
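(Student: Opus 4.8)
The plan is to verify the universal property of the pullback directly, by unwinding the definitions of all four categories and the functors between them. First I would describe the comma-style category that $\Set_*$ really is: an object is a pair $(X, x)$ with $X$ a set and $x \in X$, and a morphism $(X,x) \to (X',x')$ is a function $g\taking X \to X'$ with $g(x) = x'$; the functor $\pi\taking \Set_* \to \Set$ forgets the basepoint. Then, given any test category $\mcT$ with functors $A\taking \mcT \to S$ and $B\taking \mcT \to \Set_*$ satisfying $\delta \circ A = \pi \circ B$, I would construct the unique functor $H\taking \mcT \to \int(\delta)$ making both triangles commute, and check it is the only such functor.

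The construction of $H$ is forced. For an object $t \in \Ob(\mcT)$, the condition $\pi_\delta \circ H = A$ forces the first coordinate of $H(t)$ to be $A(t)$, and the condition (through the top map $\int(\delta) \to \Set_*$) that $H(t)$ be sent to $B(t)$ forces the second coordinate of $H(t)$ to be the basepoint of $B(t)$, which is an element of the set $\pi(B(t)) = \delta(A(t))$ — exactly what is needed for $(A(t), x_{B(t)})$ to be a legitimate object of $\int(\delta)$. For a morphism $h\taking t \to t'$ in $\mcT$, the functor $A$ gives an arrow $A(h)\taking A(t) \to A(t')$ in $S$, and the compatibility forces $H(h) = A(h)$; one then checks that $A(h)$ genuinely defines a morphism $(A(t), x_{B(t)}) \to (A(t'), x_{B(t')})$ in $\int(\delta)$, i.e.\ that $\delta(A(h))(x_{B(t)}) = x_{B(t')}$. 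This last equality is precisely the statement that $B(h)\taking B(t) \to B(t')$ is basepoint-preserving, read off under $\pi$ together with the hypothesis $\delta \circ A = \pi \circ B$. Functoriality of $H$ and commutativity of both triangles are then immediate from the corresponding properties of $A$; uniqueness holds because every choice above was forced.

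I do not expect a serious obstacle here — the result is essentially a bookkeeping exercise in chasing the definitions, and the only mild subtlety is being careful that ``$\delta \circ A = \pi \circ B$'' is an honest equality of functors (not merely a natural isomorphism), so that the forced assignments for $H$ really land in $\int(\delta)$ on the nose; this is why the proposition is stated for the Grothendieck construction with its strict description rather than up to equivalence. An alternative, slicker route would be to observe that $\pi\taking \Set_* \to \Set$ is itself the relational fibration of the identity-like functor $\Set \to \Set$ picking out each set (more precisely, $\Set_*$ is $\int$ of the inclusion $\Set \hookrightarrow \Set$ viewed appropriately), and then cite the general fact that the Grothendieck construction is stable under pullback along any functor; but for a self-contained paper the direct verification above is cleaner to present.
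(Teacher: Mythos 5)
Your proof is correct and is essentially the paper's own argument: the paper disposes of this proposition with ``This follows directly from definitions,'' and your write-up is exactly that definitional verification of the strict universal property (forced construction of $H$ on objects and morphisms, with the basepoint-preservation of $B(h)$ supplying the needed identity $\delta(A(h))(x_{B(t)})=x_{B(t')}$), carried out in full. No gap to report.
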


\begin{proof}

This follows directly from definitions.

\end{proof}

\begin{lemma}\label{lemma:morphisms on Grothendieck}

Let $S$ be a category. The functor $\int\taking S\set\to\Cat_{/S}$ is fully faithful. That is, given two instances, $\delta,\epsilon\taking S\to\Set$, there is a natural bijection, $$\Hom_{S\set}(\delta,\epsilon)\To{\iso}\Hom_{\Cat_{/S}}(\dispInt(\delta),\dispInt(\epsilon)).$$

\end{lemma}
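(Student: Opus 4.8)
The plan is to construct the bijection explicitly in both directions and check that the two constructions are mutually inverse and natural. First I would recall that a natural transformation $\phi\taking\delta\To{}\epsilon$ consists of a function $\phi_s\taking\delta(s)\to\epsilon(s)$ for each $s\in\Ob(S)$, compatible with the action of morphisms; and that an object of $\Cat_{/S}$ over $S$ is a functor to $S$, with morphisms being functors over $S$ (i.e. triangles commuting on the nose). Given $\phi$, define $\int(\phi)\taking\int(\delta)\to\int(\epsilon)$ on objects by $(s,x)\mapsto(s,\phi_s(x))$ and on morphisms by sending $f\taking(s,x)\to(s',x')$ to $f\taking(s,\phi_s(x))\to(s',\phi_{s'}(x'))$; here one must check that this is well-defined, i.e. that $\delta(f)(x)=x'$ implies $\epsilon(f)(\phi_s(x))=\phi_{s'}(x')$, which is exactly the naturality square for $\phi$ applied to $x$. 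Functoriality of $\int(\phi)$ and the fact that it commutes with the projections to $S$ are immediate from the definitions, so this gives a well-defined map $\Hom_{S\set}(\delta,\epsilon)\to\Hom_{\Cat_{/S}}(\int(\delta),\int(\epsilon))$.

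Next I would build the inverse. Given a functor $G\taking\int(\delta)\to\int(\epsilon)$ over $S$, I want to recover a natural transformation. The key observation is that because $G$ commutes with the projections $\pi_\delta,\pi_\epsilon$, an object $(s,x)$ must map to an object of the form $(s,y)$ for some $y\in\epsilon(s)$; define $\psi_s\taking\delta(s)\to\epsilon(s)$ by $\psi_s(x):=y$. To see that $\psi=(\psi_s)_s$ is natural, take $f\taking s\to s'$ in $S$ and $x\in\delta(s)$; then $f$ determines a morphism $(s,x)\to(s',\delta(f)(x))$ in $\int(\delta)$, whose image under $G$ is a morphism $(s,\psi_s(x))\to(s',\psi_{s'}(\delta(f)(x)))$ lying over $f$; but in $\int(\epsilon)$ the unique morphism over $f$ out of $(s,\psi_s(x))$ goes to $(s',\epsilon(f)(\psi_s(x)))$, so $\epsilon(f)(\psi_s(x))=\psi_{s'}(\delta(f)(x))$, which is the naturality condition. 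One then checks that $G\mapsto\psi$ and $\phi\mapsto\int(\phi)$ are mutually inverse: starting from $\phi$, the recovered $\psi_s$ sends $x$ to the second coordinate of $\int(\phi)(s,x)=(s,\phi_s(x))$, so $\psi=\phi$; starting from $G$, the functor $\int(\psi)$ agrees with $G$ on objects by construction, and agrees on morphisms because in $\int(\epsilon)$ a morphism is determined by its source object and its image in $S$ (discreteness of the fibers, as noted in the text).

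Finally I would verify naturality of the bijection in $\delta$ and $\epsilon$, which amounts to checking that $\int$ is a functor (it respects composition and identities of natural transformations, again immediate from the objectwise formula $(s,x)\mapsto(s,\phi_s(x))$) so that the displayed isomorphism is a natural isomorphism of $\Hom$-functors; full faithfulness of $\int$ is precisely the statement that each such component is a bijection.

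I do not expect a serious obstacle here: every step reduces to unwinding definitions, and the only place requiring a genuine idea is the construction of the inverse map, where one exploits two structural facts about relational fibrations (discrete opfibrations) already established in the excerpt — that $G$ commutes with the projections forces the image of $(s,x)$ to lie in the fiber over $s$, and that fibers are discrete (together with the unique-lifting property) forces a morphism in $\int(\epsilon)$ to be determined by its source and its image in $S$. So the main subtlety is simply making sure the well-definedness checks genuinely use naturality of $\phi$ in one direction and the opfibration property of $\pi_\epsilon$ in the other; everything else is bookkeeping.
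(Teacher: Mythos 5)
Your proof is correct and is essentially the paper's argument: the paper disposes of this lemma with ``this follows directly from definitions,'' and your construction of $\int(\phi)$, the inverse $G\mapsto\psi$ via commutation with the projections and the unique-lift/discrete-fiber property, and the check of naturality is precisely the careful unwinding of those definitions.
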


\begin{proof}

This follows directly from definitions.
%

\end{proof}

\begin{proposition}\label{prop:pullback on Grothendieck}

Let $F\taking S\to T$ be a functor. Let $\delta\taking S\to\Set$ and $\epsilon\taking T\to\Set$ be instances, and suppose we have a commutative diagram \begin{align}\label{dia:pullback?}\xymatrix{\int(\delta)\ar[r]\ar[d]_{\pi_{\delta}}&\int(\epsilon)\ar[d]^{\pi_{\epsilon}}\\S\ar[r]_F&T.}\end{align} Then diagram (\ref{dia:pullback?}) is a pullback, i.e. $\int(\delta)\iso S\cross_T\int(\epsilon)$, if and only if $\delta\iso \pullb{F}\epsilon$.

\end{proposition}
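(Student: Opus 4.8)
The plan is to reduce everything to Proposition~\ref{prop:Grothendieck as pullback} and Lemma~\ref{lemma:morphisms on Grothendieck} by pasting pullback squares. First I would apply Proposition~\ref{prop:Grothendieck as pullback} to the instance $\epsilon\taking T\to\Set$ in order to realize $\int(\epsilon)$ as the pullback $T\cross_\Set\Set_*$, and then stack the given commutative square~(\ref{dia:pullback?}) on top of this one to obtain the diagram
$$\xymatrix{\int(\delta)\ar[r]\ar[d]_{\pi_\delta}&\int(\epsilon)\ar[r]\ar[d]|{\pi_\epsilon}&\Set_*\ar[d]^\pi\\S\ar[r]_F&T\ar[r]_\epsilon&\Set.}$$
The right-hand square is a pullback by Proposition~\ref{prop:Grothendieck as pullback}, and the bottom composite $S\to\Set$ is $\epsilon\circ F=\pullb{F}\epsilon$; so by the pasting lemma for pullbacks, the outer rectangle is a pullback if and only if the left square is. Applying Proposition~\ref{prop:Grothendieck as pullback} a second time, now to the instance $\pullb{F}\epsilon\taking S\to\Set$, identifies the outer rectangle with the defining pullback square of $\int(\pullb{F}\epsilon)$. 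Hence the left square is a pullback exactly when the comparison functor $c\taking\int(\delta)\to\int(\pullb{F}\epsilon)$ over $S$ induced by~(\ref{dia:pullback?}) is an isomorphism in $\Cat_{/S}$.

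It remains to translate "$c$ is an isomorphism" into "$\delta\iso\pullb{F}\epsilon$". Here I would invoke Lemma~\ref{lemma:morphisms on Grothendieck}: the functor $\int\taking S\set\to\Cat_{/S}$ is fully faithful, so $c$ is the image $\int(\wt c)$ of a unique natural transformation $\wt c\taking\delta\to\pullb{F}\epsilon$, whose components are obtained by tracking where an element $x\in\delta(s)$ is sent inside $\epsilon(F(s))$ by the top map of~(\ref{dia:pullback?}). Since fully faithful functors reflect isomorphisms, $c$ is an isomorphism if and only if $\wt c$ is. In particular, if~(\ref{dia:pullback?}) is a pullback then $\delta\iso\pullb{F}\epsilon$; and conversely, given an isomorphism $\delta\iso\pullb{F}\epsilon$, taking the top map of~(\ref{dia:pullback?}) to be the one corresponding under these bijections to that isomorphism makes the square a pullback, by the pasting argument above.

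I expect the only non-formal point to be the identification of the comparison functor $c$ with $\int$ applied to a natural transformation $\delta\to\pullb{F}\epsilon$ — that is, checking that the correspondence of Lemma~\ref{lemma:morphisms on Grothendieck} really does send $c$ to the componentwise map just described. This is also the place where one must be careful that the isomorphism asserted in the statement is the one canonically attached to the square~(\ref{dia:pullback?}), not an unrelated abstract isomorphism of instances; once that is pinned down, the rest is two applications of Proposition~\ref{prop:Grothendieck as pullback} and the pullback pasting lemma.
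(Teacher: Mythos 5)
Your argument is correct, but it is not the route the paper takes: the paper's proof is a one-line direct verification, unwinding definitions to compare the objects and morphisms of $\int(\delta)$ with those of $S\cross_T\int(\epsilon)$ (an object of the latter is a pair $(s,(t,x))$ with $F(s)=t$, i.e.\ a pair $(s,x\in\epsilon(F(s)))$, which is exactly an object of $\int(\pullb{F}\epsilon)$, and similarly for arrows). You instead reduce the statement to previously established structural facts: two applications of Proposition~\ref{prop:Grothendieck as pullback}, the pasting lemma for pullbacks, and the full faithfulness of $\int$ from Lemma~\ref{lemma:morphisms on Grothendieck}, which converts ``the comparison functor over $S$ is an isomorphism'' into ``the corresponding natural transformation $\delta\to\pullb{F}\epsilon$ is an isomorphism.'' What your approach buys is the avoidance of any element chasing, a clean identification of the bottom composite $\epsilon\circ F$ with $\pullb{F}\epsilon$, and, notably, an explicit handling of the point the paper glosses over: the ``if'' direction only makes sense for the isomorphism canonically attached to the square (an arbitrary abstract isomorphism $\delta\iso\pullb{F}\epsilon$ does not force a given commutative square to be a pullback, e.g.\ when the top map is $\int$ of a non-invertible endomorphism), and your closing caveat pins this down correctly. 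What the paper's approach buys is brevity and self-containedness; it needs no pasting lemma and no reflection of isomorphisms by fully faithful functors. Both are sound, and your version arguably documents the logical dependencies among the three Grothendieck-construction results more transparently.
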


\begin{proof}

This is checked easily by comparing the set of objects and the set of morphisms in $\int(\delta)$ with the respective sets in $S\cross_T\int(\epsilon)$.

\end{proof}

\setcounter{subsubsection}{\value{theorem}}
\subsubsection{Examples from algebraic topology}\label{sec:groupoids}\addtocounter{theorem}{1}
In algebraic topology (see \cite{May}), one associates to every topological space $X$ a fundamental groupoid $Gpd(X)$. It is a category whose objects are the points of $X$ and whose set of morphisms between two objects is the set of (equivalence classes of) continuous paths in $X$ from one point to the other. Two paths in $X$ are considered equivalent if one can be deformed to the other (without any part of it leaving $X$). Composition of morphisms is given by concatenation of paths.

One can reduce some of the study of a space $X$ to the study of this algebraic object $G=Gpd(X)$, and the latter is well-suited for translation to the language of this paper. 

\begin{example}

Suppose that $G$ is a groupoid. Then a covering of groupoids in the sense of \cite[Section 4.3]{May} is precisely the same as a surjective discrete opfibration with schema $G$.

Let $G=Gpd(S^1)$ denote the fundamental groupoid of the circle with circumference 1. Explicitly we have $\Ob(G)=\{\theta\in\RR\}/\!\!\sim$, where $\theta\sim\theta'$ if $\theta-\theta'\in\ZZ$; and we have $$\Hom_G(\theta,\theta')=\{x\in\RR\;|\;x+\theta\sim\theta'\}.$$ Think of $G$ as the category whose objects are positions of a clock hand and whose morphisms are arbitrary durations of time (rotating the hands from one clock position around and around to another). Consider the functor $T\taking G\to\Set$ such that $T(\theta)=\{t\in\RR\;|\;t-\theta\in\ZZ\}$ and such that for $x\in\Hom_G(\theta,\theta')$ we put $T(x)(t)=x+t$. So, for a clock position $\theta$, the functor $T$ returns all points in time at which the clock is in position $\theta$.

Applying the Grothendieck construction to $T$, we get a covering $\pi\taking\int(T)\to G$, which corresponds to the universal cover of the circle $S^1$. One can think of it as a helix  (modeling the time line) mapping down to the circle (modeling the clock). 

\end{example}

A much more sophisticated example relating databases to classical questions in algebraic topology may be found in \cite{Mor}.

%
%

\section{Constraints via lifting conditions}\label{sec:constraints}

In this section we introduce the lifting problem approach to database constraints. Roughly the same model will apply in the next section to database queries, the idea being that a lifting constraint is a lifting query that is guaranteed to have a result.

\subsection{Basic definitions}

\begin{definition}\label{def:lifting constraints}

Let $S\in\Cat$ be a database schema. A {\em (lifting) constraint on $S$} is a pair $(m,n)$ of functors $$W\To{m}R\To{n}S.$$ A functor $\pi\taking I\to S$ is said to {\em satisfy the constraint $(m,n)$} if, for all solid arrow commutative diagrams of the form \begin{align}\label{dia:lifting constraint}\xymatrix{W\ar[r]\ar[d]_m&I\ar[d]^{\pi}\\R\ar[r]_n\ar@{-->}[ur]&S,}\end{align} there exists a dotted arrow lift making the diagram commute.

A {\em (lifting) constraint set} is a set $\xi:=\{W_\alpha\To{m_\alpha}R_\alpha\To{n_\alpha}S\;|\;\alpha\in A\}$, for some set $A$. A functor $\pi\taking I\to S$ is said to {\em satisfy the constraint set $\xi$} if it satisfies each constraint $(m_\alpha,n_\alpha)$ in $\xi$.

Given a constraint set $\xi$ on $S$, we say that a constraint $W\To{m}R\To{n}S$ is {\em implied by $\xi$} if, whenever a functor $\pi\taking I\to S$ satisfies $\xi$ it also satisfies $(m,n)$.

\end{definition}

\begin{remark}

While not all constraints on databases are lifting constraints (for example, declaring a table to be the union of two others is not expressible by a lifting constraint), lifting constraints are the only type of constraint we will be considering in this paper. For that reason, we often leave off the word ``lifting," as suggested by the parentheses in Definition \ref{def:lifting constraints}.

\end{remark}

\begin{example}\label{ex:vertex is source}

Consider the schema 
$$\mcG=\fbox{\xymatrix{\LMO{E}\ar@/^1pc/[rr]^s\ar@/_1pc/[rr]_t&&\LMO{V}}}$$ 
The category $\mcG\set$ is precisely the category of (directed) graphs. Given a graph $X\taking\mcG\to\Set$, we have a function $X(s)\taking X(E)\to X(V)$ assigning to every edge its source vertex. Suppose we want to declare this function to be surjective, meaning that every vertex in $X$ is the source of some edge. We can do that with the following lifting constraint $$\fbox{$\LMO{V}$}\To{\hsps{m}}\fbox{\xymatrix{\LMO{E}\ar[r]^s&\LMO{V}}}\To{\hsps{n}}\fbox{\xymatrix{\LMO{E}\ar@/^1pc/[rr]^s\ar@/_1pc/[rr]_t&&\LMO{V}}}$$ where $m$ and $n$ respect labeling. A graph $\delta\taking\mcG\to\Set$ has the desired property, that every vertex is a source, iff $\int(\delta)$ satisfies the lifting constraint $(m,n)$.

\end{example}

\begin{definition}\label{def:universal constraint}

Let $S\in\Cat$ be a schema. Given a functor $m\taking W\to R$, define a set $\la m\ra$ of lifting constraints as follows: $$\la m\ra=\left\{W\To{m}R\To{n}S\;|\;n\in \Hom_\Cat(R,S)\right\}.$$ (Note that there is a bijection $\la m\ra\iso\Hom_\Cat(R,S)$, but the form of the set $\la m\ra$ allows us to apply Definition \ref{def:lifting constraints}.) Given a set of functors $M=\{m_j\taking W_j\to R_j\;|\;j\in J\}$, the union $$\la M\ra:=\bigcup_{j\in J}\la m_j\ra$$ is a constraint set, which we call the {\em universal constraint set generated by $M$}. A functor $\pi\taking I\to S$ satisfying the constraint set $\la M\ra$ is called an $M$-fibration. We say that elements of $M$ are {\em generating constraints} for $M$-fibrations.

\end{definition}

\begin{remark}

Universal constraint sets seem to be more important in traditional mathematical contexts than in ``informational" or database contexts. For example, in the world of simplicial sets, the Kan fibrations are $M$-fibrations for some universal constraint set $\la M\ra$, called {\em the set of generating acyclic cofibrations} (see \cite{Hir}).

\end{remark}

%
%

\subsection{Discrete opfibrations via lifting constraints}\label{sec:discrete ops}

Our goal now is to express the notion of discrete opfibrations in terms of lifting constraints. In other words, we will exhibit a finite set of functors $\{m_\alpha\taking W_\alpha\to R_\alpha\}_{\alpha\in A}$ that serve to ``check" whether an arbitrary functor $\pi\taking I\to S$ is a discrete opfibration. In fact, Definition \ref{def:dofib} will define $\pi$ to be a discrete opfibration if and only if it is a $\{\rho_1,\rho_2\}$-fibration, where $\rho_1\taking W_1\to R_1$ and $\rho_2\taking W_2\to R_2$ are functors displayed in Figure \ref{fig:constraints for discrete opfibration}.

\begin{figure}[h]
\caption{The generating constraints, $\rho_1$ and $\rho_2$, for discrete opfibrations} 
\label{fig:constraints for discrete opfibration}
$$W_1=\parbox{1.1in}{\fbox{\xymatrix@=19pt{&\parbox{.15in}{~\\\vspace{-.15in}}\\\LMO{a}&\hspace{.1in}&\\&\parbox{.15in}{~\\\vspace{-.15in}}}}}
\hspace{1.5in}
W_2=\parbox{1.1in}{\fbox{\xymatrix@=10pt{&\hsp&\LMO{b_1}\\\LMO{a}\ar[urr]^{f_1}\ar[drr]_{f_2}\\&&\LMO{b_2}}}}
$$
$$\hspace{-.6in}\xydoown{_{\parbox{.9in}{$\rho_1\taking W_1\to R_1$\\$\rho_1(a)=a$}}}
\hspace{1.6in}
\xydoown{_{\parbox{1.3in}{$\rho_2\taking W_2\to R_2$\\~\\$\rho_2(a)=a,$\\$\rho_2(b_1)=\rho_2(b_2)=b,$\\$\rho_2(f_1)=\rho_2(f_2)=f$}}}
$$
$$R_1=\parbox{1.1in}{\fbox{\xymatrix@=10pt{\LMO{a}\ar[rr]^f&\hsp&\LMO{b}}}}
\hspace{1.55in}
R_2=\parbox{1.1in}{\fbox{\xymatrix@=10pt{\LMO{a}\ar[rr]^f&\hsp&\LMO{b}}}}
$$
\end{figure}
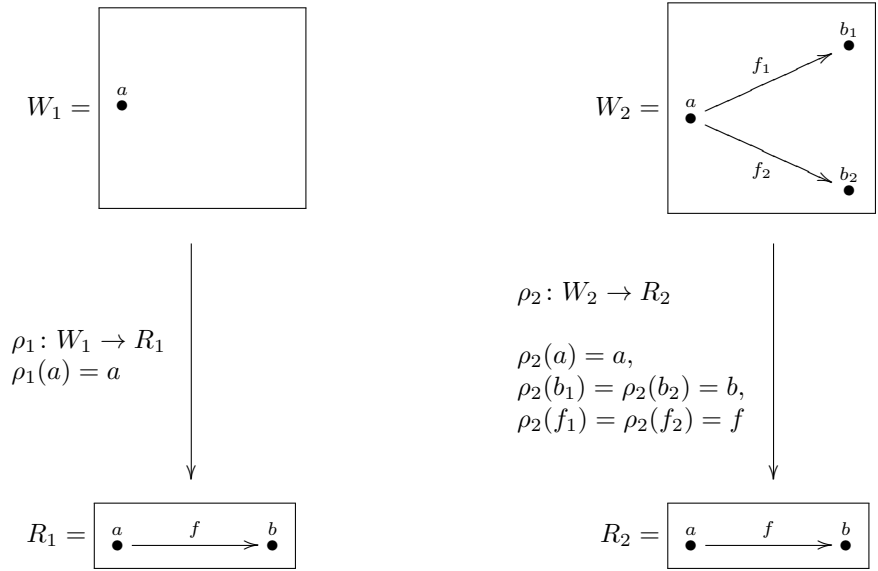

\begin{definition}\label{def:dofib}

Let $I$ and $S$ be categories and let $\pi\taking I\to S$ be a functor. Then $I$ is a {\em discrete opfibration} if it satisfies the lifting constraints $\rho_1$ and $\rho_2$ from Figure \ref{fig:constraints for discrete opfibration}. That is, for any pair of horizontal maps $W_1\to I$ and $R_1\to S$ (respectively for any pair of horizontal maps $W_2\to I$ and $R_2\to S$)
$$
\xymatrix@=30pt{
W_1\ar[r]^\forall\ar[d]_{\rho_1}&I\ar[d]^{\pi}\\
R_1\ar[r]_\forall\ar@{-->}[ur]^\exists&S}
\hspace{.9in}
\xymatrix@=30pt{
W_2\ar[r]^\forall\ar[d]_{\rho_2}&I\ar[d]^{\pi}\\
R_2\ar[r]^\forall\ar@{-->}[ur]^\exists&S}
$$ 
there exists a dotted arrow functor, as shown, such that the full diagram commutes.

\end{definition}

Let $\pi\taking I\to S$ be a $\{\rho_1,\rho_2\}$-fibration. Then for any functor $R_1=R_2\to S$, i.e. for any arrow $f\taking s\to s'$ in $S$, we have two lifting conditions. A good way to understand the conditions of Definition \ref{def:dofib} is that for any object $x\in\pi^\m1(s)$ in the fiber over $s$, 
\begin{enumerate}
\item there exists at least one arrow in $I$, emanating from $x$, whose image under $\pi$ is $f$, and
\item there exists at most one arrow in $I$, emanating from $x$, whose image under $\pi$ is $f$.
\end{enumerate}

In the remainder of this section we give some consequences of Definition \ref{def:dofib}. 

\begin{proposition}\label{prop:discrete opfibrations are discrete}

Let $\pi\taking I\to S$ be a discrete opfibration. Then for each object $s\in\Ob(S)$ the fiber $\pi^\m1(s)$ is a discrete category.

\end{proposition}

\begin{proof}

Let $s\in\Ob(S)$ be an object, and let $g\taking x\to y$ be a morphism in the fiber $\pi^\m1(s)\ss I$; we will show that $x=y$ and that $g=\id_x$ is the identity morphism. Consider the map $\rho_2\taking W_2\to R_2$ from Figure \ref{fig:constraints for discrete opfibration}. Let $n\taking R_2\to S$ be the functor sending $f$ to $\id_s$.  Let $p\taking W_2\to I$ send $f_1$ to $\id_x$ and send $f_2$ to $g$. We have a lifting diagram as in Definition \ref{def:dofib}, so a lift is guaranteed. This lift equates $\id_x$ and $g$.

\end{proof}

\comment{

For any category $\mcC$, and symbol $x$, let $\{x\}\star\mcC$ denote the result of appending an initial object called $x$ to $\mcC$. To check, any finite linear order category can be formed by repeated applications of this operation:
$$\fbox{$\LMO{a_1}\to\LMO{a_2}\to\cdots\to\LMO{a_n}$}\iso\{a_1\}\star\{a_2\}\star\cdots\star\{a_n\}.$$ 

Under this notation, the generating constraints from Figure \ref{fig:constraints for discrete opfibration} can be rewritten as functors of the form
$$\rho_1\taking\{a\}\too\{a\}\star\{b\}\hsp\tn{and}\hsp\rho_2\taking\{a\}\star\{b_1,b_2\}\too\{a\}\star\{b\}$$
roughly preserving labels.

The following proposition is useful in the theory of computation. 

\begin{proposition}\label{prop:useful for computation}

Let $\{0\}\star\{1,2\}$ and $\{0\}\star\{1\}\star\{2\}$ denote the categories pictured as the source and target of the arrow $m$ below 
$$\parbox{.8in}{\fbox{\xymatrix@=10pt{&&\LMO{1}\\\LMO{0}\ar[urr]\ar[drr]\\&&\LMO{2}}}}\To{\hsp m\hsp}\parbox{1in}{\fbox{\xymatrix@=10pt{&&\LMO{1}\ar[dd]\\\LMO{0}\ar[urr]\ar[drr]\\&&\LMO{2}}}}$$ 
and let $m$ be the unique functor that preserves our labeling of objects, 0,1,2. If $\pi\taking I\to S$ is a discrete opfibration then it is an $\{m\}$-fibration.

\end{proposition}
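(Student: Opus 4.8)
The plan is to verify directly that a lifting problem against $m\taking\Lambda^{\{0,1,2\}}_0\to\Delta^{\{0,1,2\}}$ can be solved whenever $\pi$ is a $\{\rho_1,\rho_2\}$-fibration. So suppose we are given a commutative square with top map $p\taking\Lambda^{\{0,1,2\}}_0\to I$ and bottom map $n\taking\Delta^{\{0,1,2\}}\to S$; I must produce a functor $\ell\taking\Delta^{\{0,1,2\}}\to I$ with $\ell m = p$ and $\pi\ell=n$. Write $x_i:=p(i)\in\Ob(I)$ for $i=0,1,2$, and let $g_1\taking x_0\to x_1$, $g_2\taking x_0\to x_2$ be the images under $p$ of the two nonidentity generating arrows of $\Lambda^{\{0,1,2\}}_0$. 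On the base, $n$ picks out arrows $u\taking n(0)\to n(1)$, $v\taking n(0)\to n(2)$, $w\taking n(1)\to n(2)$ in $S$ with $w\circ u = v$, and commutativity of the square gives $\pi(x_i)=n(i)$, $\pi(g_1)=u$, $\pi(g_2)=v$.

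The only freedom in defining $\ell$ is the choice of where to send the new arrow $1\to 2$ of $\Delta^{\{0,1,2\}}$: it must be an arrow $h\taking x_1\to x_2$ in $I$ with $\pi(h)=w$, and for $\ell$ to be a functor (a triangle is commutative) we need $h\circ g_1 = g_2$. First I would get existence: apply the lifting property for $\rho_1\taking W_1\to R_1$ to the arrow $w\taking \pi(x_1)\to\pi(x_2)$ in $S$, with the base point $a\mapsto x_1$; this produces at least one arrow $h\taking x_1\to x_2$ in $I$ lying over $w$. That is the candidate value $\ell(1\to 2):=h$. It remains only to check the triangle identity $h\circ g_1=g_2$, and this is where the $\rho_2$ constraint enters — it is the one genuinely substantive step.

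For the triangle identity, note that both composites $h\circ g_1$ and $g_2$ are arrows $x_0\to x_2$ in $I$ whose image under $\pi$ is $w\circ u = v$ (for the first, because $\pi(h)=w$ and $\pi(g_1)=u$; for the second by hypothesis). Now feed this into the $\rho_2$-lifting: take $R_2$ mapping to $S$ by sending its single arrow $f$ to $v\taking n(0)\to n(2)$, and take $W_2\to I$ sending $a\mapsto x_0$, $f_1\mapsto h\circ g_1$, $f_2\mapsto g_2$. This is a valid commutative square for $\rho_2$ (both arrows lie over $v$, both emanate from $x_0$), so a lift exists; by the uniqueness clause (item (2) following Definition~\ref{def:dofib}, which is exactly what $\rho_2$ encodes), any two arrows out of $x_0$ lying over the same arrow $v$ must coincide, hence $h\circ g_1 = g_2$. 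With the triangle verified, $\ell$ is a well-defined functor, and by construction $\ell m = p$ (it agrees with $p$ on objects $0,1,2$ and on the arrows $g_1,g_2$) and $\pi\ell = n$ (it agrees with $n$ on the generating arrows $u,v,w$), so $\ell$ is the required lift.

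\textbf{Expected main obstacle.} The existence half is immediate from $\rho_1$; the only real content is recognizing that the functoriality of $\ell$ reduces to a single equation of arrows $x_0\to x_2$ over $v$, and then invoking the ``at most one'' consequence of the $\rho_2$-constraint to force that equation. The bookkeeping — checking that the squares one writes down for $\rho_1$ and $\rho_2$ actually commute, and that the resulting $\ell$ respects $m$ and $\pi$ on the nose — is routine but must be done carefully because $m$ is not injective on the nonidentity-arrow data in the naive sense (it is injective, but one should be sure the composite $1\to 2\to $ nothing, i.e.\ only the three generating arrows and their composite, are what need checking).
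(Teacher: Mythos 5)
Your proof is correct, but it takes a different route from the paper's. You argue element-wise: use the $\rho_1$-lift to produce an arrow $h$ out of $p(1)$ over $w=n(1\to 2)$, and then use the ``at most one arrow over a given base arrow out of a given object'' consequence of $\rho_2$ to force the triangle identity $h\circ g_1=g_2$, which simultaneously pins down the codomain of $h$ (note that the $\rho_1$-lift by itself only gives $h\taking p(1)\to y$ over $w$ for \emph{some} $y$; your claim that $y=p(2)$ is not immediate, but it does follow from the very $\rho_2$-argument you give, since equality of the two arrows out of $p(0)$ forces equality of their codomains --- so this is a presentational slip, not a gap). The paper instead argues structurally: it factors the problem by gluing cells, first forming $X=\colim(\Delta^{\{1,2'\}}\from\Delta^{\{1\}}\to\Lambda^{\{0,1,2\}}_0)$ via a pushout of $\rho_1$, then $Y$ via a pushout of $\rho_2$, and observes that $Y\to\Delta^{\{0,1,2\}}$ is an isomorphism; in effect it exhibits $m$ as a relative cell complex built from the generating constraints, so that lifting follows from the general closure properties of fibrations under pushouts and composition (in the spirit of Proposition \ref{prop:pushout implication} and the small object argument). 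Your version is more elementary and self-contained, closely parallel to the paper's own proof of Proposition \ref{prop:relational fibrations are discrete}; the paper's version is less computation but leans on the colimit bookkeeping and generalizes more readily to other maps built cellularly from $\rho_1,\rho_2$.
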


\begin{proof}

Suppose that $\pi\taking I\to S$ is a discrete opfibration. We extend our lifting problem to the solid arrow diagram 
$$\xymatrix{
\{1\}\ar[r]^{f'}\ar[d]_{\rho_1}&\{0\}\star\{1,2\}\ar[r]\ar[d]_(.45)m&I\ar[d]^\pi\\
\{1\}\star\{2'\}\ar[r]_f\ar@/_.5pc/@{-->}[rru]&\{0\}\star\{1\}\star\{2\}\ar[r]&S}$$ 
where $f$ sends $1\mapsto 1$ and $2'\mapsto 2$, where $f'$ sends $1\mapsto 1$, and where the right-hand square is the diagram for which we want a lift. By Definition \ref{def:dofib} (applicable since the left-hand map $\rho_1$ is a generating constraint for discrete opfibrations), there exists a dotted arrow lift making the diagrams commute. Let $X=\colim\big(\{1\}\star\{2'\}\From{\rho_1}\{1\}\to\{0\}\star\{1,2\}\big)$, drawn here mapping to $\{0\}\star\{1\}\star\{2\}$
$$X:=\parbox{.7in}{\fbox{\xymatrix{\LMO{2}&\LMO{2'}\\\LMO{0}\ar[r]\ar[u]&\LMO{1}\ar[u]}}}
\xymatrix{~\ar[r]&~}
\parbox{.9in}{\fbox{\xymatrix@=10pt{&\LMO{2}\\\\\LMO{0}\ar[uur]\ar[rr]&&\LMO{1}\ar[uul]}}}=:\{0\}\star\{1\}\star\{2\}
$$
Then it suffices to find a lift for the induced diagram 
$$\xymatrix{X\ar[r]\ar[d]&I\ar[d]^\pi\\\{0\}\star\{1\}\star\{2\}\ar[r]&S.}$$

We again extend to get the solid arrow diagram 
$$\xymatrix{
\{0\}\star\{2,2'\}\ar[r]^g\ar[d]_{\rho_2}&X\ar[r]\ar[d]&I\ar[d]^\pi\\
\{0\}\star\{2\}\ar[r]\ar@{-->}[urr]&\{0\}\star\{1\}\star\{2\}\ar[r]&S
}
$$ 
where $g$ sends the two generating arrows to the paths $0\to 2$ and $0\to 2'$, respectively, in $X$. Again by Definition \ref{def:dofib} (applicable since the left-hand map $\rho_2$ is a generating constraint functor for discrete opfibrations), we have a dotted arrow lift making the diagrams commute. Setting $Y=\colim\big(\{0\}\star\{2\}\From{\rho_2}(\{0\}\star\{2,2'\})\To{g}X\big)$, it suffices to find a lift for the diagram 
$$\xymatrix{Y\ar[r]\ar[d]&I\ar[d]^\pi\\\{0\}\star\{1\}\star\{2\}\ar[r]&S.}$$
But now one can check that the left-hand map $Y\to\{0\}\star\{1\}\star\{2\}$ is an isomorphism of categories, so we are done.

\end{proof}

}

\begin{proposition}\label{prop:discrete opfibrations are faithful}

Let $\pi\taking I\to S$ be a discrete opfibration. Then $\pi$ is faithful. In other words, for any two objects $i,j\in\Ob(I)$ the function $$\pi\taking\Hom_I(i,j)\to\Hom_S(\pi(i),\pi(j))$$ is injective.

\end{proposition}

\begin{proof}

To prove that $\pi$ is faithful, we need only find a solution to each lifting diagram of the form:
\begin{align*}
W:=\fbox{\xymatrix{\LMO{i}\ar@/^.5pc/[r]^~\ar@/_.5pc/[r]_~&\LMO{j}}}
\xyright{}
I\\
\;\hspace{.3in}\xydown{_m}\hspace{.85in}\xydown{^\pi}~\hspace{-.15in}\\
R:=\fbox{\xymatrix{\LMO{i}\ar[r]&\LMO{j}}}
\xyright{}
S
\end{align*}
We can extend this diagram on the left with either surjective map from the relational constraint functor $\rho_2$ (see Figure \ref{fig:constraints for discrete opfibration}) to $m$, as indicated in the diagram:
$$\xymatrix{
W_2\ar[r]\ar[d]_{\rho_2}&W\ar[r]\ar[d]_m&I\ar[d]_\pi\\
R_2\ar@{=}[r]\ar@/_.5pc/@{-->}[urr]&R\ar[r]&S}$$
The result follows by noticing that the left-hand square is a pushout.

\end{proof}

Let $S$ be a category. We define a functor $\del\taking\Cat_{/S}\to S\set$ as follows. For any $F\taking X\to S$, let $\singlefun{X}\taking X\to\Set$ denote the terminal object of $X\set$ (see Notation \ref{sec:notation}), and note that $\int(\singlefun{X})\iso X$ in $\Cat_{/X}$. Define $\del(F)\taking S\to\Set$ as $$\del(F):=\lpush{F}(\singlefun{X}).$$ We have the following proposition, which is well-known.

\begin{proposition}\label{prop:adjunction}

\begin{enumerate}[(i)]
\item The functor $\del$ is left adjoint to $\int$: $$\Adjoint{\del}{\Cat_{/S}}{S\set.}{\dispInt}$$ 
\item For any $\gamma\taking S\to\Set$ the counit map is an isomorphism $$\del\circ\dispInt(\gamma)\To{\iso}\gamma.$$
\item An object $X\To{F} S$ in $\Cat_{/S}$ is a discrete opfibration if and only if $F\iso\int\del(F)$ in $\Cat_{/S}$.
\end{enumerate}

\end{proposition}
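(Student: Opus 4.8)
The plan is to establish (i) first by a standard adjunction argument, then deduce (ii) and (iii) from it together with the previously recorded facts about the Grothendieck construction. For part (i), I would exhibit the unit and counit, or equivalently produce a natural bijection
\[
\Hom_{S\set}\bigl(\del(F),\gamma\bigr)\;\iso\;\Hom_{\Cat_{/S}}\bigl(F,\dispInt(\gamma)\bigr)
\]
for $F\taking X\to S$ in $\Cat_{/S}$ and $\gamma\taking S\to\Set$. Unwinding the definition $\del(F)=\lpush{F}(\singlefun{X})$ and using the adjunction $\lpush{F}\dashv\pullb{F}$ from Definition \ref{def:migration}, the left side becomes $\Hom_{X\set}(\singlefun{X},\pullb{F}\gamma)=\Hom_{X\set}(\singlefun{X},\gamma\circ F)$. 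Since $\singlefun{X}$ is terminal in $X\set$ this looks like it collapses; the point is that it does not collapse to a one-element set but rather computes the set of global sections, i.e. the set of functors $X\to\int(\gamma\circ F)$ over $X$ that split $\pi_{\gamma\circ F}$. Using Lemma \ref{lemma:morphisms on Grothendieck} (full faithfulness of $\int$) and Proposition \ref{prop:pullback on Grothendieck} to identify $\int(\gamma\circ F)=\int(\pullb{F}\gamma)\iso X\cross_S\int(\gamma)$, a section of $\pi_{\gamma\circ F}$ over $X$ is precisely a functor $X\to\int(\gamma)$ making the triangle over $S$ commute, which is exactly $\Hom_{\Cat_{/S}}(F,\dispInt(\gamma))$. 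Naturality in $\gamma$ is then routine to check.

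For part (ii), I would apply the counit of the adjunction from (i) at $F=\int(\gamma)$ and show it is an isomorphism. Because $\int$ is fully faithful (Lemma \ref{lemma:morphisms on Grothendieck}), the counit $\del\circ\dispInt(\gamma)\to\gamma$ is an isomorphism: a right adjoint is fully faithful if and only if the counit is a natural isomorphism. Alternatively, one computes directly that $\del(\int(\gamma))=\lpush{\pi_\gamma}(\singlefun{\int(\gamma)})$ and checks objectwise, using that the comma-category fiber $(s\downarrow\pi_\gamma)$ over an object $s\in S$ has a terminal object (namely $(s,x)$ paired with $\id_s$, ranging over $x\in\gamma(s)$), so the relevant colimit defining $\lpush{}$ reduces to $\gamma(s)$; I would present whichever is cleaner, probably the abstract one.

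For part (iii): the ``if'' direction is immediate, since $\int\del(F)$ is a Grothendieck construction and hence a relational fibration by Proposition \ref{prop:relational fibrations are discrete} together with Definition \ref{def:dofib} (or simply because it lies in the image of $\int$, which is exactly the relational fibrations — that equivalence being implicit in the setup). For the ``only if'' direction, suppose $F\taking X\to S$ is a relational fibration. I would use the general principle: $F$ is in the essential image of $\int$, so $F\iso\int(\gamma)$ for some $\gamma$ (this is the classical equivalence between discrete opfibrations over $S$ and $S\set$, which the paper has been invoking), and then apply (ii) to get $\int\del(F)\iso\int\del\int(\gamma)\iso\int(\gamma)\iso F$. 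If one wants to avoid citing that classical equivalence as a black box, the honest route is: the unit $F\to\int\del(F)$ is always a map in $\Cat_{/S}$, and one checks it is an isomorphism precisely when $F$ is a relational fibration, by verifying it is bijective on objects and on morphisms using the two lifting conditions (existence and uniqueness of lifts) spelled out after Definition \ref{def:dofib}.

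The main obstacle I anticipate is part (i), specifically keeping straight that $\Hom_{X\set}(\singlefun{X},\pullb{F}\gamma)$ is \emph{not} trivial: the subtlety is that $\singlefun{X}$ is terminal in $X\set$, so one must resist the temptation to conclude this Hom-set has one element, and instead recognize — via $\int$ — that maps \emph{into} $\pullb{F}\gamma$ correspond to sections of its fibration, which carry real content. Getting the bookkeeping of this identification right, and checking naturality, is the only place where care is genuinely required; everything else is formal manipulation of the adjunction $\lpush{F}\dashv\pullb{F}$, Lemma \ref{lemma:morphisms on Grothendieck}, and Proposition \ref{prop:pullback on Grothendieck}.
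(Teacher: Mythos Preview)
Your argument for (i) and (ii) is essentially the paper's: the same chain of isomorphisms via $\lpush{F}\dashv\pullb{F}$, Lemma~\ref{lemma:morphisms on Grothendieck}, and Proposition~\ref{prop:pullback on Grothendieck}, and then the standard ``right adjoint fully faithful $\Leftrightarrow$ counit invertible'' for (ii). One remark: your stated ``main obstacle'' is a non-issue. Maps \emph{out of} the terminal object $\singlefun{X}$ do not collapse; only maps \emph{into} it do. The set $\Hom_{X\set}(\singlefun{X},\pullb{F}\gamma)$ is simply the set of global sections of $\pullb{F}\gamma$, and there is nothing to resist.

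For (iii) there is a genuine difference. Your first route---cite the classical equivalence between discrete opfibrations and $S\set$---is circular here, since statement (iii) is precisely one half of that equivalence; you rightly flag this and offer an ``honest route'' via the unit. The paper instead does a direct computation: when $F$ is a relational fibration, the fiber $F^{-1}(s)$ is \emph{final} in the comma category $(F\downarrow s)$, so
\[
\del(F)(s)=\lpush{F}(\singlefun{X})(s)=\colim_{(F\downarrow s)}\singlefun{X}\iso F^{-1}(s),
\]
which matches $\int\del(F)$ with $X$ on objects, and similarly for arrows. Your unit-is-iso approach would work too, but the paper's finality observation is the cleaner key step and is worth knowing: it is exactly where the relational-fibration hypothesis (unique lifts of arrows) is consumed.
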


\begin{proof}

Let $F\taking X\to S$ be an object of $\Cat_{/S}$ and let $\gamma\taking S\to\Set$ be an object of $S\set$. By Proposition \ref{prop:pullback on Grothendieck} we have a pullback diagram: $$\xymatrix{\int(\pullb{F}\gamma)\ar[r]\ar[d]\ullimit&\int(\gamma)\ar[d]\\X\ar[r]_F&S}$$ which implies the first isomorphism in the following chain:
\begin{align*}
\Hom_{\Cat_{/S}}(F,\dispInt(\gamma))&\iso\Hom_{\Cat_{/X}}(\id_X,\dispInt(\pullb{F}\gamma))\\
&\iso\Hom_{X\set}(\singlefun{X},\pullb{F}\gamma)\\
&\iso\Hom_{S\set}(\lpush{F}(\singlefun{X}),\gamma)=\Hom_{S\set}(\del F,\gamma).
\end{align*} 
The second isomorphism follows from Lemma \ref{lemma:morphisms on Grothendieck} and the third is adjointness; this proves Statement (i). Statement (ii) follows from the same lemma.
 
By construction, $\pi\taking\int(\delta)\to S$ is a discrete opfibration for any $\delta\taking S\to\Set$, so if $X\To{F}S$ is not a discrete opfibration then $X\not\iso\int\del(F)$. Thus, it remains to show that if $F$ is a discrete opfibration then $X\iso\int\del(F)$. To see this, notice that for each $s\in\Ob(S)$ the set $F^\m1(s)$ is final in $(F\down s)$, so $$\del(F)(s)=\lpush{F}(\singlefun{X})(s)=\colim_{(F\down s)}\singlefun{X}\iso F^\m1(s).$$ This shows that the object structure in $F$ is the same as that in $\int\del(F)$. Similar analyses can be carried out for arrows and path equivalences.

\end{proof}

\subsection{Examples}\label{sec:examples}

In this section we will show how to use lifting constraints (see Definition \ref{def:lifting constraints}) to declare a number of different properties for tables in a database. Our examples include \begin{itemize}\item declaring a table to be non-empty,\item declaring a table to have exactly one row,\item declaring a foreign key to be injective,\item declaring a foreign key to be surjective, \item declaring a binary relation to be reflexive, symmetric, and/or transitive, \item declaring a table to be a product or a general limit of other tables, and \item declaring that there are no nontrivial cycles in the data on a self-referencing table.\end{itemize} We will discuss these in the above order.

\begin{example}[Nonempty]\label{ex:const nonempty}

Let $S$ be a schema, and let $T\in\Ob(S)$ be a table, which we want to declare non-empty. We use the constraint drawn as follows $$\parbox{.3in}{\fbox{\xymatrix{~}}}\hspace{-.1in}\To{\hsp m_1\hsp}\parbox{.35in}{\fbox{\xymatrix{\LMO{A}}}}\hspace{-.1in}\To{\hsp n\hsp}S$$ where $n(A)=T$. In other words, we set $W_1=\emptyset$ to be the empty category, and we set $R=\{A\}$ to be the discrete category with one object, $A$. To say that the lifting problem $$\xymatrix{W_1\ar[r]\ar[d]_{m_1}&I\ar[d]^\pi\\R\ar@{-->}[ur]\ar[r]_n&S}$$ has a solution is to say that there exists an object in the instance category $I$ whose image under $\pi$ is $T$. In other words, there exists a row in table $T$. Here, the commutativity of the upper-left triangle does nothing, and the commutativity of the lower-right triangle does all the work. 

\end{example}

\begin{example}[Cardinality=1]\label{ex:const card=1}

Let $S$ be a schema, and $T\in\Ob(S)$ a table, which we want to declare to have exactly one row. We know a constraint guaranteeing the existence of a row in $T$ from Example \ref{ex:const nonempty}; in Section \ref{sec:uniqueness} we will give a general method for transforming existence constraints into uniqueness constraints, but here we will just give the result of that method.

To declare $T$ to have at most one row, we use the constraint drawn as follows:  $$\parbox{.28in}{\fbox{\xymatrix{\LMO{a_1}\\\LMO{a_2}}}}\To{\hsp m_2\hsp}\parbox{.25in}{\fbox{\xymatrix{\LMO{A}}}}\To{\hsp n\hsp}S$$ where $m_2(a_1)=m_2(a_2)=A$ and where $n(A)=T$.  In other words, we set $W_2=\{a_1,a_2\}$ to be a discrete category with two objects, and we set $R=\{A\}$ to be a discrete category with one object. The lifting problem $$\xymatrix{W_2\ar[r]\ar[d]_{m_2}&I\ar[d]^\pi\\R\ar@{-->}[ur]\ar[r]_n&S}$$ has a solution iff both triangles commute. We know already that the image of $a$ and $b$ in $I$ consists of two rows in table $T$, because the square commutes. The commutativity of the upper-left triangle implies that $a$ and $b$ are the same, as desired. The commutativity of the lower-right triangle is implied by the surjectivity of $m_2$ and the commutativity of the square. 

The set $\{(m_1,n),(m_2,n)\}$ is a constraint set on $S$ that is satisfied by a discrete opfibration $\pi$ if and only if the set $I(T)$ of rows in $T$ has exactly one element.

\end{example}

We will be more brief from here on out. The following constraint was used in Example \ref{ex:vertex is source}.

\begin{example}[Surjective foreign key]\label{ex:const surj}

The declaration that a foreign key $f\taking T\to T'$ be surjective is achieved by the constraint: $$\parbox{.65in}{\fbox{\xymatrix{&\LMO{b}}}}\To{\hsp m\hsp}\parbox{.78in}{\fbox{\xymatrix{\LMO{A}\ar[r]^F&\LMO{B}}}}\To{\hsp n\hsp}S$$ where $m(b)=B, n(A)=T, n(B)=T'$, and $n(F)=f$.

\end{example}

\begin{example}[Injective foreign key]\label{ex:const inj}

The declaration that a foreign key $f\taking T\to T'$ be injective is achieved by the constraint: $$\parbox{.82in}{\fbox{\xymatrix@=10pt{\LMO{a_1}\ar[rrd]\\&&\LMO{b}\\\LMO{a_2}\ar[urr]}}}\To{\hsp m\hsp}\parbox{.8in}{\fbox{\xymatrix@=10pt{\LMO{A}\ar[rr]^F&&\LMO{B}}}}\To{\hsp n\hsp}S$$ where $m(a_1)=m(a_2)=A$ and $m(b)=B$, and where $n(F)=f$.

\end{example}

There exist constraints that ensure a binary relation $R\ss A\cross A$ is transitive, which we give in Example \ref{ex:transitive}. There is another constraint to ensure it is symmetric, and another to ensure it is reflexive; we leave these as exercises.

\begin{example}[Transitive binary relation]\label{ex:transitive}

The declaration that a relation 
$$\fbox{\xymatrix{R\ar@<.5ex>[r]^f\ar@<-.5ex>[r]_g& A}}\ss S
$$ 
be transitive is achieved by the constraint $$\parbox{1.6in}{~\\\\\\\fbox{\xymatrix@=8pt{&\LMO{r_1}\ar[ddl]_(.4){f_1}\ar[ddr]^(.4){g_1}&&\LMO{r_2}\ar[ddl]_(.4){f_2}\ar[ddr]^(.4){g_2}\\\\\LMO{a_1}&&\LMO{a_2}&&\LMO{a_3}}}\\~\vspace{.48in}}\parbox{.8in}{~\\\\\\$\Too{\hspace{.15in} m\hspace{.15in}}$\\~\vspace{.25in}}\parbox{1.7in}{\fbox{\xymatrix@=8pt{&\LMO{R_1}\ar[ddl]_(.4){F_1}\ar[ddr]^(.4){G_1}&&\LMO{R_2}\ar[ddl]_(.4){F_2}\ar[ddr]^(.4){G_2}\\\\\LMO{A_1}&&\LMO{A_2}&&\LMO{A_3}\\\\&&\LMO{R_3}\ar[uull]^{F_3}\ar[uurr]_{G_3}}}}\Too{\hspace{.15in} n\hspace{.15in}}S$$ where the functors $m$ and $n$ should be clear by our labeling (e.g. $n(R_1)=n(R_2)=n(R_3)=R$).

\end{example}

We now give the example of lifting constraints for products. This is part of a much larger story: In Section \ref{sec:lifting more expressive} we will show that any limit constraint can be modeled by lifting constraints.

\begin{example}[Product]\label{ex:const prod}

Suppose we have a table $T$ and two of its columns are $f\taking T\to U$ and $g\taking T\to V$. The declaration that (the set of rows in) table $T$ is the product of (the sets of rows in) tables $U$ and $V$ is achieved by two constraints, an existence constraint and a uniqueness constraint. The existence constraint is $$\parbox{.8in}{\fbox{\xymatrix@=10pt{\\\\\LMO{b}&&\LMO{c}}}}\To{\hsp m_1\hsp}\parbox{.9in}{\fbox{\xymatrix@=10pt{&\LMO{A}\ar[ddr]^G\ar[ddl]_F\\\\\LMO{B}&&\LMO{C}}}}\To{\hsp n\hsp}S$$ where $m_1(b)=B, m_1(c)=C$, and $n(F)=f, n(G)=g$. The uniqueness constraint is 
$$\parbox{1in}
{\fbox{
\xymatrix@=12pt{
&\LMO{a_1}\ar[ddr]^{G_1}\ar[ddl]_{F_1}\\&\LMO{a_2}\ar[dr]_(.6){G_2}\ar[dl]^(.6){F_2}\\\LMO{b}&&\LMO{c}}}}\To{\hsp m_2\hsp}\parbox{1in}{\fbox{\xymatrix@=12pt{&\LMO{A}\ar[ddr]^G\ar[ddl]_F\\\\\LMO{B}&&\LMO{C}}}}\To{\hsp n\hsp}S$$
 where $m_2(F_1)=m_2(F_2)=F, m_2(G_1)=m_2(G_2)=G,$ and $n(F)=f, n(G)=g$.

Thus the constraint set for $(T,f,g)$ to be a product is $\{(m_1,n),(m_2,n)\}$.

\end{example}

\begin{example}[Forests]

Let $S$ be the free category generated by the graph with one object and one arrow, pictured here: 
\begin{align}\label{dia:DDS}S:=\parbox{.5in}{\fbox{\xymatrix{\LMO{\nu}\ar@(l,d)[]+<0pt,-5pt>_p}}}\end{align}
This is just a self-referencing table. In mathematics, an instance $\delta\taking S\to\Set$ of such a self-referencing table is called a {\em discrete dynamical system} or {\em DDS}. The set $\delta(\nu)$ will be called the {\em set of nodes} of $\delta$ and given a node $x\in\delta(\nu)$, the node $\delta(p)(x)$ is called the {\em parent of $x$}. Here is a picture of a such an instance $\delta$ and its Grothendieck construction $I=\int(\delta)$.
\begin{align}\label{dia:DDS}\scriptsize
\delta:=\parbox{.6in}{
\begin{tabular}{| l || l |}\bhline
\multicolumn{2}{| c |}{{\tt $\nu$}}\\\bhline
{\bf ID}&{\bf p}\\\bbhline 
a&f\\\hline b&c\\\hline c&d\\\hline d&g\\\hline e&f\\\hline f&i\\\hline g&c\\\hline h&f\\\hline i&i\\\hline j&i\\\bhline
\end{tabular}
}
\normalsize\hspace{1in}
I:=\parbox{1.7in}{\fbox{\xymatrix@=8pt{
\LMO{a}\ar[drr]&&\LMO{b}\ar[rr]&&\LMO{c}\ar[rr]&&\LMO{d}\ar[d]\\
\LMO{e}\ar[rr]&&\LMO{f}\ar[drr]&&&&\LMO{g}\ar[llu]\\
\LMO{h}\ar[urr]&&&&\LMO{i}\ar@(r,u)[]+<0pt,10pt>\\
&&\LMO{j}\ar[urr]
}}}
\end{align}

Notice that a DDS looks like a forest (collection of trees) except that it may have cycles. These cycles can only occur at the root of a tree, and indeed each tree in the forest has a root cycle. In (\ref{dia:DDS}) we see that the tree containing $a$ has a root cycle of length 1, and the tree containing $b$ has a root cycle of length 3. Forests are a useful notion in computer science; we consider a DDS a forest if and only if each root cycle has length 1. This can be achieved by the following lifting constraint.

Let $R=S$ be the schema in (\ref{dia:DDS}), and let $n=\id\taking R\to S$. Let $W$ be the free category on the graph below, and let $m\taking W\to R$ denote the functor sending $p_1$ and $p_2$ to $p$.
$$\parbox{.85in}{\begin{center}W:=\end{center}\fbox{\xymatrix{\LMO{\nu_1}\ar@/^1pc/[r]^{p_1}&\LMO{\nu_2}\ar@/^1pc/[l]^{p_2}}}\begin{center}~\end{center}}
\Too{\hsp m\hsp}
\parbox{.5in}{\begin{center}R:=\end{center}\fbox{\xymatrix{\LMO{\nu}\ar@(l,d)[]+<0pt,-5pt>_p}}\begin{center}~\end{center}}
\Too{\hsp n\hsp}
\parbox{.5in}{\begin{center}S:=\end{center}\fbox{\xymatrix{\LMO{\nu}\ar@(l,d)[]+<0pt,-5pt>_p}}\begin{center}~\end{center}}
$$

\end{example}

\subsection{Encoding uniqueness constraints}\label{sec:uniqueness}

Suppose given a constraint $W\To{m}R\To{n}S$. According to Definition \ref{def:lifting constraints} a functor $\pi\taking I\to S$ satisfies $(m,n)$ if for every solid arrow diagram 
\begin{align}\label{dia:lifting constraint2}
\xymatrix{W\ar[r]\ar[d]_m&I\ar[d]^\pi\\R\ar[r]_n\ar@{-->}[ur]&S,}
\end{align} 
{\em there exists} a dotted arrow lift making it commute. Thus it may appear that all lifting constraints are existence declarations. However, by employing a technique found in \cite{Mak}, we can always turn such an existence declaration into a uniqueness declaration using a related lifting diagram. In fact this was done a couple times (see Examples \ref{ex:const card=1}, \ref{ex:const prod}) above. The uniqueness constraint corresponding to $(m,n)$ is 
\begin{align}\label{dia:uniqueness}
\xymatrix{R\amalg_WR\ar[rr]^-{(\id_R\;\amalg\;\id_R)}&&R\ar[r]^n&S.}
\end{align} 
In other words, $\pi$ satisfies constraint (\ref{dia:uniqueness}) if and only if there exists {\em at most one} dotted arrow lift making diagram (\ref{dia:lifting constraint2}) commute.

%

\subsection{Lifting constraints are more expressive than limit sketches}\label{sec:lifting more expressive}

In this section we show that lifting constraints are more expressive that limit sketches when it comes to set-models. We define limit sketches in Definition \ref{def:limit sketch}, prove that lifting constraints are at least as expressive as limit sketches in Proposition \ref{prop:lifting covers limit sketch}, and prove that lifting constraints are strictly more expressive than limit sketches in Proposition \ref{prop:lifting more expressive}.

For any category $\mcC$, we let $\mcC\lcone$ denote the category obtained by adjoining an initial object to $\mcC$.

\begin{definition}\label{def:limit sketch}

A {\em limit sketch} consists of a category $S$, and a set $D$ of commutative diagrams in $\Cat$ of the form 
$$\xymatrix{J_d\ar[r]^{X_d}\ar[d]_{i_d}&S\\(J_d)\lcone\ar[ru]_{L_d}}$$
one for each $d\in D$. Each $X_d$ is called a {\em specified limit pre-cone} in $S$ and each $L_d$ is called a {\em specified limit cone} in $S$. We call $S$ the {\em underlying category} of the sketch $(S,D)$.

If $(S,D)$ is a limit sketch, then an {\em $(S,D)$-model} is a functor $\delta\taking S\to\Set$ such that for each $d\in D$ the map $i_d$ induces an isomorphism
\begin{align}\label{dia:limit sketch bijection}
\lim_{(J_d)\lcone}(\delta\circ L_d)\iso\lim_{J_d}(\delta\circ X_d).
\end{align}

\end{definition}

\begin{proposition}\label{prop:lifting covers limit sketch}

Let $(S,D)$ be a limit sketch. Then we can construct a set of lifting constraints $\xi$ such that the functor $\int\taking S\set\to\Cat_{/S}$ induces a bijection between the set of functors $\delta\taking S\to\Set$ modeling $(S,D)$ and the set of instances $\pi\taking I\to S$ satisfying $\xi$.

\end{proposition}

\begin{proof}

It suffices to show that for each diagram $d=(J,X,L)$ as shown to the left
\begin{align}\label{dia:lift for limit}
\xymatrix{
&&&&\int(\delta)\ar[d]^\pi\\
J_d\ar[r]^{X_d}\ar[d]_{i_d}&S&\hsp&J_d\ar@{-->}[ur]\ar[r]^{X_d}\ar[d]_{i_d}&S\ar[r]^\delta&\Set\\
(J_d)\lcone\ar[ru]_{L_d}&&&J_d\lcone\ar[ru]_{L_d}}
\end{align}
there exists a set $K_d$ of lifting constraints $\{(m_k,n_k)\}_{k\in K_d}$ with the property that $\delta\taking S\to\Set$ satisfies (\ref{dia:limit sketch bijection}) if and only if $\int(\delta)\to S$ satisfies the constraints in $K_d$.

The limit $\lim_J\delta\circ X_d$ is in bijection with the set of dotted lifts $s_d\taking J_d\to\int(\delta)$ (such that $\pi\circ s_d=X_d$)  in the right-hand diagram of (\ref{dia:lift for limit}), and similarly the limit $\lim_((J_d)\lcone)\delta\circ L_d$ is in bijection with the set of lifts $(J_d)\lcone\to\int(\delta)$. Thus to say that $\delta$ models $d$ is to say that for every commutative diagram of the form
$$\xymatrix{
J_d\ar[r]^{s_d}\ar[d]_{i_d}&\int(\delta)\ar[d]^\pi\\(J_d)\lcone\ar[r]_{L_d}\ar@{-->}[ur]&S
}
$$
there exists a unique dotted lift. We thus take $K_d$ to be the set $\{(i_d,L_d),(i_d',L_d)\}$, where $i_d'\taking (J_d)\lcone\amalg_{J_d}(J_d)\lcone\to(J_d)\lcone$. In other words $(i_d,L_d)$ encodes the existence of the dotted lift and $(i'_d,L_d)$ encodes its uniqueness, as in Section \ref{sec:uniqueness}. This completes the proof.

\end{proof}

\begin{proposition}\label{prop:lifting more expressive}

There exists a schema $S$ and a set of lifting constraints $\xi$ on it whose satisfaction is not modeled by any limit sketch with underlying category $S$. 

\end{proposition}

\begin{proof}

Let $S=\ul{1}$ be the terminal category, so that a functor $\delta\taking S\to\Set$ can be considered as just a set $\delta\in\Ob(\Set)$ and we have $I:=\int(\delta)\iso\delta$. Consider the unique lifting constraint of the form $\ul{2}\To{m}\ul{1}\To{n}S$. Up to isomorphism there exists precisely two instance $\delta$ satisfying $\xi=\{(m,n)\}$, namely either $\delta\iso\ul{0}$ or $\delta\iso\ul{1}$. We will show that there is no limit sketch with underlying category $S$ having only two models up to isomorphism.

For a limit sketch on $S$ each $(J,X,L)$ either has $J=\emptyset$ or $J\To{X}S$ is an epimorphism. In the first case, $\lim_{(J_d)\lcone}(\delta\circ L_d)\iso\ul{1}$ and $\lim_{J_d}(\delta\circ X_d)\iso\delta$, so a model of $(J,X,L)$ must have $\delta\iso\ul{1}$. In the second case $\lim_{(J_d)\lcone}(\delta\circ L_d)\iso\delta\iso\lim_{J_d}(\delta\circ X_d)$, so every set $\delta$ models this constraint. Thus there is no set $D$ such that the set of sketch models of $(S,D)$ has precisely two elements up to isomorphism.

\end{proof}

\subsection{Constraint implications}\label{sec:constraint implications}

Propositions \ref{prop:retract implication} and \ref{prop:pushout implication} below are constraint implication results. That is, they show that instances satisfying one lifting constraint automatically satisfy another. These two constraint implications are not exhaustive, they merely give the idea. 

\begin{definition}

Suppose that one has a diagram of the form $$\xymatrix{W\ar[r]^{s_1}\ar[d]^{m} &W'\ar[r]^{p_1}\ar[d]^(.47){m'}&W\ar[d]^m\\R\ar[r]_{s_2}&R'\ar[r]_{p_2}&R}$$ such that the top and bottom compositions are identity, $$p_1\circ s_1=\id_W\hsp\tn{and}\hsp p_2\circ s_2=\id_R.$$ In this case we say that $m$ is a {\em retract} of $m'$. 

\end{definition}

\begin{proposition}\label{prop:retract implication}

Suppose that $(m,n)$ is a constraint for a schema $S$ and that $m$ is a retract of some $m'$, part of which is shown to the left in the diagram $$\xymatrix{W'\ar[r]^{p_1}\ar[d]_{m'} &W\ar[d]^{m}\\R'\ar[r]_{p_2}&R\ar[r]_n&S.}$$ Then any discrete opfibration $\pi\taking I\to S$ satisfying $(m',n\circ p_2)$ also satisfies $(m,n)$.

\end{proposition}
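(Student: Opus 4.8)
The plan is to invoke the classical \emph{retract argument} from homotopical algebra: given a lifting problem for the constraint $(m,n)$, precompose the data with the retraction maps $p_1,p_2$ to convert it into a lifting problem for $(m',n\circ p_2)$, solve that one by hypothesis, and then postcompose the resulting lift with the section $s_2$ to recover a lift of the original problem. I note in passing that the hypothesis that $\pi$ be a relational fibration will play no role; the argument works for an arbitrary functor $\pi\taking I\to S$ satisfying $(m',n\circ p_2)$.

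In more detail: a solid-arrow commutative square for $(m,n)$ in the sense of Definition~\ref{def:lifting constraints} amounts to a functor $f\taking W\to I$ with $\pi\circ f=n\circ m$, and what must be produced is a functor $\ell\taking R\to I$ with $\ell\circ m=f$ and $\pi\circ\ell=n$. First I would form the functor $f\circ p_1\taking W'\to I$ and observe, using the commutativity of the right-hand square $m\circ p_1=p_2\circ m'$, that $\pi\circ(f\circ p_1)=n\circ m\circ p_1=(n\circ p_2)\circ m'$; thus $f\circ p_1$ together with $n\circ p_2\taking R'\to S$ constitutes a genuine solid-arrow square over $\pi$ whose left-hand edge is $m'$. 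Since $\pi$ satisfies $(m',n\circ p_2)$ by assumption, this square admits a lift $\ell'\taking R'\to I$ with $\ell'\circ m'=f\circ p_1$ and $\pi\circ\ell'=n\circ p_2$.

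The last step is to set $\ell:=\ell'\circ s_2$ and verify the two triangle identities. Using the commutativity of the left-hand square $s_2\circ m=m'\circ s_1$ together with $p_1\circ s_1=\id_W$, one obtains $\ell\circ m=\ell'\circ s_2\circ m=\ell'\circ m'\circ s_1=f\circ p_1\circ s_1=f$; and using $p_2\circ s_2=\id_R$ one obtains $\pi\circ\ell=\pi\circ\ell'\circ s_2=n\circ p_2\circ s_2=n$. Hence $\ell$ solves the original lifting problem, which is precisely the assertion that $\pi$ satisfies $(m,n)$.

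There is no substantive obstacle here: the proof is a short diagram chase, and the only thing to be careful about is applying the retract data on the correct side at each stage — the retraction $p_1$ on the source when building the auxiliary square, the section $s_2$ on the target when building $\ell$ — and invoking the appropriate one of the two commuting squares of the retract diagram in each of the two triangle checks.
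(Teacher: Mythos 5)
Your proof is correct and is essentially the paper's own argument: precompose the lifting problem with $p_1$, solve the resulting problem for $(m',n\circ p_2)$, and postcompose the lift with $s_2$, using the two commuting squares of the retract diagram for the triangle checks (the paper leaves these checks to the reader, and also does not use the relational fibration hypothesis, confirming your observation that it is superfluous).
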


\begin{proof}

The proof is straightforward but we include it for pedagogical reasons. Suppose given a lifting problem 
\begin{align}\label{dia:lifting retracts}
\xymatrix{W\ar[r]^p\ar[d]_m&I\ar[d]^{\pi}\\R\ar@{-->}[ur]^\ell\ar[r]_{n}&S.}
\end{align}
We assume by hypothesis that the dotted arrow lift $f$ exists making the solid arrow diagram $$\xymatrix{W\ar[r]^{s_1}\ar[d]^(.6){m} &W'\ar[r]^{p_1}\ar[d]^(.6){m'}&W\ar[d]^(.6)m\ar[r]^p&I\ar[d]^{\pi}\\R\ar[r]_{s_2}&R'\ar@{-->}[urr]^(.4)f\ar[r]_{p_2}&R\ar[r]_n&S}$$ commute. But then one checks that $\ell=f\circ s_2\taking R\to I$ is a lift as in (\ref{dia:lifting retracts}).

\end{proof}

\begin{proposition}\label{prop:pushout implication}

Suppose that the square to the left in the diagram $$\xymatrix{W'\ar[r]\ar[d]_{m'}&W\ar[d]^{m}\\R'\ar[r]_q&R\lrlimit\ar[r]_n&S}$$ is a pushout (as indicated by the corner symbol $\ulcorner$). If $\pi\taking I\to S$ satisfies the constraint $(m',n\circ q)$ then it satisfies $(m,n)$.

\end{proposition}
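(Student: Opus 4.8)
The plan is to reduce the lifting problem for $(m,n)$ to a lifting problem for $(m',n\circ q)$, exploiting the universal property of the pushout. First I would start with an arbitrary solid-arrow commutative square
$$\xymatrix{W\ar[r]^p\ar[d]_m&I\ar[d]^\pi\\ R\ar[r]_n&S}$$
for which we seek a lift $\ell\taking R\to I$. The idea is to precompose everything in sight with the legs of the pushout square. Write the pushout square as having legs $m'\taking W'\to R'$, the top map $W'\to W$ (call it $w$), and $q\taking R'\to R$. Precomposing, we get $p\circ w\taking W'\to I$ and $n\circ q\taking R'\to S$, and one checks that the square with left edge $m'$, top edge $p\circ w$, bottom edge $n\circ q$, and right edge $\pi$ commutes (both composites equal $\pi\circ p\circ w = n\circ m\circ w = n\circ q\circ m'$, using commutativity of the original square and of the pushout square).

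Next, since $\pi$ satisfies $(m',n\circ q)$ by hypothesis, there is a lift $\ell'\taking R'\to I$ with $\ell'\circ m' = p\circ w$ and $\pi\circ\ell' = n\circ q$. Now I would invoke the universal property of the pushout $R = R'\amalg_{W'}W$: we have two maps into $I$, namely $\ell'\taking R'\to I$ and $p\taking W\to I$, and they agree on $W'$ because $\ell'\circ m' = p\circ w$. Hence they glue to a unique functor $\ell\taking R\to I$ with $\ell\circ q = \ell'$ and $\ell\circ w_R = p$, where $w_R\taking W\to R$ is the top leg of the pushout (which equals $m$ — wait, more carefully: the pushout square is $W'\to W$, $W'\to R'$, $W\to R$, $R'\to R$, so the map $W\to R$ in that square is exactly $m$). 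So $\ell\circ m = p$, which is the upper-left triangle.

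It remains to check the lower-right triangle $\pi\circ\ell = n$. Here I would again use the uniqueness part of the pushout's universal property: both $\pi\circ\ell$ and $n$ are maps $R\to S$, and they agree after precomposition with $q$ (since $\pi\circ\ell\circ q = \pi\circ\ell' = n\circ q$) and after precomposition with $m$ (since $\pi\circ\ell\circ m = \pi\circ p = n\circ m$, using the original square). By the uniqueness clause of the pushout property, $\pi\circ\ell = n$. Therefore $\ell$ is the desired lift, and $\pi$ satisfies $(m,n)$.

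The main obstacle — really the only subtlety — is keeping straight which maps are the legs of the pushout and verifying the two compatibility conditions that feed the universal property (one to produce $\ell$, one to verify $\pi\circ\ell=n$); everything else is diagram-chasing that follows directly from the definitions. Note the hypothesis that $\pi$ is a relational fibration is not actually needed for this argument — arbitrary functors $\pi$ would do — but stating it does no harm and matches the companion Proposition \ref{prop:retract implication}.
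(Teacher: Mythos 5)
Your proof is correct: forming the induced square for $(m',n\circ q)$, taking the guaranteed lift $\ell'\taking R'\to I$, and gluing $\ell'$ with $p$ via the universal property of the pushout (using its uniqueness clause to verify $\pi\circ\ell=n$) is exactly the standard argument, which the paper itself dismisses with the single word ``Obvious.'' One small aside: the statement of this proposition does not actually hypothesize that $\pi$ is a relational fibration (only the retract proposition does), so your closing remark about that hypothesis is moot, though it does not affect the validity of your argument.
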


\begin{proof}

Obvious.

\end{proof}

\section{Queries as lifting problems}\label{sec:queries as LPs}

In this section we will show a correspondence between queries and lifting problems, under which the set of results for a query corresponds to the set of solutions (i.e. lifts) for the associated lifting problem. The main example of this was discussed in Example \ref{ex:query party setup}. There we were interested in learning more about a married couple, given certain known information about them. After building up the necessary theory in Sections \ref{sec:where-less} and \ref{sec:general lifting} we will apply it to the case of the married couple in Example \ref{ex:bob and sue revisited}.

In the Introduction, more specifically in (\ref{dia:lifting SQL}), we alluded to a dictionary between certain SQL statements and lifting problems. In this section we will extend this a bit to include more specificity in the SELECT clause. Namely, we have this correspondence 
\begin{align}\label{dia:lifting SQL 2}
\parbox{.1in}{\xymatrix{&W\ar[r]^p\ar[d]_m&I\ar[d]^\pi\\X\ar[r]_q&R\ar@{-->}[ur]^\ell\ar[r]_n&S}}
\hspace{1in}
\parbox{1.4in}{
\begin{tabbing}
SELECT\;\; \=$X\To{q}R$\\
FROM \>$R\To{n}S$\\
WHERE \>$R\From{m}W\To{p}I$
\end{tabbing}}
\end{align} 
The map $q$ can be composed with any lift $\ell\taking R\to I$ to restrict our attention (i.e. project) to a certain segment of the result. We explain these ideas in Example \ref{ex:SQL statement}.  However, before getting to this general kind of query, we will discuss queries that do not include the WHERE-clause, i.e. the collection $W\to I$ of knowns.

\subsection{WHERE-less queries}\label{sec:where-less}

In this section we study queries as in Diagram (\ref{dia:lifting SQL 2}) in which the where-clause $W$ is empty, $W=\emptyset$. Such queries are often called {\em views}. In this case the two maps $R\From{m}W\To{p}I$ contain no information, so Diagram (\ref{dia:lifting SQL 2}) reduces to the following:
\begin{align*}
\parbox{.7in}{\xymatrix{&&I\ar[d]^\pi\\X\ar[r]_q&R\ar@{-->}[ur]^\ell\ar[r]_n&S}}
\hspace{1in}
\parbox{1.4in}{
\begin{tabbing}
SELECT\;\; \=$X\To{q}R$\\
FROM \>$R\To{n}S$
\end{tabbing}}
\end{align*}
We call these {\em WHERE-less queries}. 

\begin{definition}\label{def:probe}

Let $S$ be a schema. A {\em probe on $S$} is a functor $n\taking R\to S$; the category $R$ is called {\em the result schema for the probe}. Given a discrete opfibration $\pi\taking I\to S$ the probe $n$ is said to {\em set up the lifting problem} $$\xymatrix{&I\ar[d]^\pi\\R\ar[r]_n\ar@{-->}[ur]&S.}$$ In the presence of a discrete opfibration $\pi$, we may refer to the probe $n$ as {\em a where-less query}. We define the {\em set of solutions} to the query, denoted $\Gamma(n,\pi)$ as $$\Gamma(n,\pi):=\{\ell\taking R\to I\;|\;\pi\circ\ell=n\}.$$

\end{definition}

\begin{example}\label{ex:same last name}

Consider the discrete opfibration $\pi\taking I\to S$ given here: \begin{align*}I=\parbox{1.7in}{\fbox{\xymatrix@=7pt{&&&&&&\color{ForestGreen}{\LTO{Ann}}\\&&&&&&\color{ForestGreen}\LTO{Bob}\\\color{Blue}{\LMO{x137}}\ar[uurrrrrr]\ar[dddrrrrrr]&&&&&&\color{ForestGreen}{\LTO{Deb}}\\\color{Blue}{\LMO{x139}}\ar[uurrrrrr]\ar[ddrrrrrr]\\\color{Blue}{\LMO{x144}}\ar[rrrrrruu]\ar[ddrrrrrr]\\&&&&&&\color{red}{\LTO{Smith}}\\&&&&&&\color{red}{\LTO{Jones}}}}}\\\parbox{.9in}{$\xymatrix{~\ar[d]^\pi\\~}$}\\S=\parbox{1.7in}{\fbox{\xymatrix@=7pt{&&&&&\color{ForestGreen}{\LTO{FNames}}\\\color{Blue}{\LTO{Person}}\LA{urrrrr}{First}\LAL{drrrrr}{Last}\\&&&&&\color{red}{\LTO{LNames}}}}}\end{align*} To find two people with the same last name, we find lifts of the where-less query 
$$R:=\parbox{.95in}{
\fbox{
\xymatrix@=10pt{\color{Blue}{\LMO{P_1}}\ar[drr]^{L_1}\\&&\color{red}{\LMO{LN}}\\\color{Blue}{\LMO{P_2}}\ar[urr]_{L_2}}}}
\To{\hsp n\hsp}
\parbox{1.7in}{
\fbox{
\xymatrix@=7pt{&&&&&\color{ForestGreen}{\LTO{FNames}}\\\color{Blue}{\LTO{Person}}\LA{urrrrr}{First}\LAL{drrrrr}{Last}\\&&&&&\color{red}{\LTO{LNames}}}}}=S
$$ 
where both $n(L_1)=n(L_2)=(\LTO{Person}\To{\tn{Last}}\LTO{LNames}$). There are two people (Ann Smith, Bob Smith) with the same last name, so we may hope to get as our result set $\{(x137,\text{Smith},x139)\}$.

Here is how to compute the result set for our query. We are looking for functors $\ell\taking R\to I$ that make the diagram 
\begin{align}\label{dia:same last name}
\xymatrix{&I\ar[d]^\pi\\R\ar[r]_n\ar[ur]^\ell&S}
\end{align} 
commute. Since $L_1$ and $L_2$ in $R$ are sent to Last in $S$, we need to choose two ``downward sloping" arrows in $I$ with the same target. Doing so, we indeed find all pairs of persons in $I$ that have the same last name. Unfortunately, this query would return five results, which we can abbreviate as 
\begin{align}\label{dia:query results}
&(x137,\text{Smith},x139),\hsp (x139,\text{Smith},x137),\\
\nonumber &(x137,\text{Smith},x137),\hsp(x139,\text{Smith},x139),\hsp(x144,\text{Jones},x144).
\end{align} 
The first two are what we are looking for, but they are redundant; the last three are degenerate (e.g. Deb Jones has the same last name as Deb Jones). We will deal with these issues in Example \ref{ex:dedupe the query}, after we discuss morphisms of queries. 

\end{example}

\begin{definition}\label{def:strict morphisms}

Let $S$ be a schema. Given two probes $n_1\taking R_1\to S$ and $n_2\taking R_2\to S$, we define a {\em strict morphism} from $n_1$ to $n_2$, denoted $f\taking n_1\to n_2$, to be a functor $f\taking R_1\to R_2$ such that $n_2\circ f=n_1$.  Let $\Prbs(S)\iso\Cat_{/S}$ denote the category whose objects are probes and whose morphisms are strict morphisms. In the presence of a discrete opfibration $\pi\taking I\to S$, we may refer to $f$ as a {\em strict morphism of where-less queries} (as in Definition \ref{def:probe}).

Given a strict morphism $f\taking n_1\to n_2$, one obtains a function $\Gamma(f,\pi)\taking\Gamma(n_2,\pi)\to\Gamma(n_1,\pi)$, because any lift $\ell_2$ in the diagram \begin{align}\label{dia:strict}\xymatrix{&&I\ar[d]^\pi\\R_1\ar[r]^f\ar@/_1pc/[rr]_{n_1}&R_2\ar[r]^{n_2}\ar[ur]^{\ell_2}&S,}\end{align} i.e. with $n_2=\pi\circ\ell_2$, induces a lift $\ell_1:=\ell_2\circ f\taking R_1\to I$ with $n_1=\pi\circ\ell_1.$ We thus have produced a functor $\Gamma(-,\pi)\taking\Prbs(S)\op\to\Set$. It is just the representable functor at $\pi$, 
$$\Gamma(-,\pi)=\Hom_{\Prbs(S)}(-,\pi).$$

\end{definition}

\begin{remark}

We use the term {\em strict} morphism of probes in Definition \ref{def:strict morphisms} because a more lax version of morphism will be defined later, in Definition \ref{def:probes}. Whereas above we consider commutative triangles of categories (e.g. $n_2\circ f=n_1$ in (\ref{dia:strict})) and call the resulting category $\Prbs(S)$, the lax version will allow for natural transformations (e.g. $n_2\circ f\Rightarrow n_1$) and will be denoted $\Prb(S)$. The functor $\Gamma(-,\pi)\taking\Prbs(S)\to\Set$ defined in Definition \ref{def:strict morphisms} can be extended to a functor (which we give the same name), $\Gamma(-,\pi)\taking\Prb(S)\to\Set$. This will all be discussed in Section \ref{sec:new discrete opfibration}.

\end{remark}

\begin{example}\label{ex:dedupe the query}

We again consider the situation from Example \ref{ex:same last name}, where we were using the query $n\taking R\to S$ to look for pairs of people who had the same last name. The solution set in (\ref{dia:query results}) had two problems: \begin{itemize}\item we were getting degenerate answers because every person has the same last name as him- or her-self, and \item we were getting order-redundancy because, given two people with the same last name, we can reverse the order and get another such pair.\end{itemize}

In order to deal with the first issue, consider the strict morphism $f$ of queries $$\parbox{.9in}{\begin{center}$R=$\end{center}\fbox{\xymatrix@=10pt{\color{Blue}{\LMO{P_1}}\ar[drr]^{L_1}\\&&\color{red}{\LMO{LN}}\\\color{Blue}{\LMO{P_2}}\ar[urr]_{L_2}}}}\xyright{^f}\parbox{.9in}{\vspace{-.2in}\begin{center}$R_2:=$\end{center}\fbox{\xymatrix@=10pt{\color{Blue}{\LMO{P}}\ar[rr]^{L}&&\color{red}{\LMO{LN}}}}}\xyright{^{n_2}}\parbox{1.5in}{\begin{center}$S=$\end{center}\fbox{\xymatrix@=4pt{&&&&&\color{ForestGreen}{\LTO{FNames}}\\\color{Blue}{\LTO{Person}}\LA{urrrrr}{First}\LAL{drrrrr}{Last}\\&&&&&\color{red}{\LTO{LNames}}}}}$$ where $f(L_1)=f(L_2)=L$, and note that indeed $n=n_2\circ f$. By Definition \ref{def:strict morphisms} this induces a function between the solution sets; i.e. we get a function $$\Gamma(f,\pi)\taking\Gamma(n_2,\pi)\to\Gamma(n,\pi).$$ In our example (\ref{dia:query results}), the image of this function is precisely the set of duplicates. In other words, if we delete the elements in the image of $\Gamma(f,\pi)$ we get $$\Gamma(n,\pi) - \Gamma(n_2,\pi) = \{(x137, \text{Smith}, x139), (x139, \text{Smith}, x137)\}.$$

In order to deal with the remaining order-redundancy issue, consider the swap map $s\taking R\to R$ given by $s(L_1)= L_2$ and $s(L_2)= L_1$. Note that $n\circ s=n$. Thus we have a strict morphism of probes $s\taking n\to n$, which induces a function $\Gamma(s,\pi)\taking\Gamma(n,\pi)\to\Gamma(n,\pi)$. By taking the orbits of this function, we effectively quotient out by order-swapping. In fact our swap map acts not just on $(R,n)$ but on $(R_2,n_2)$ as well, and so we can combine this method with the one above to obtain the desired answer, the one element set consisting of $(x137, \text{Smith}, x139)$, in unspecified order.

\end{example}

\begin{proposition}\label{prop:where-less}

Let $\delta\taking S\to\Set$ be an instance and $\pi_{\delta}\taking I\to S$ the induced discrete opfibration. Given any probe $n\taking R\to S$, there is an isomorphism $$\Gamma(n,\pi_{\delta})\To{\iso}\tn{lim}_R (\delta\circ n).$$

\end{proposition}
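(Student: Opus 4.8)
The plan is to unwind both sides of the claimed isomorphism into explicit descriptions and exhibit a natural bijection. First I would recall that, by Proposition~\ref{prop:Grothendieck as pullback} (or directly from the construction of $\int$), a functor $\ell\taking R\to I=\int(\delta)$ lying over $S$ — i.e. satisfying $\pi_\delta\circ\ell=n$ — is exactly the following data: for each object $r\in\Ob(R)$, since $\pi_\delta(\ell(r))=n(r)$, the object $\ell(r)$ must have the form $(n(r),x_r)$ for some $x_r\in\delta(n(r))$; and for each morphism $g\taking r\to r'$ in $R$, since $\pi_\delta(\ell(g))=n(g)$, functoriality forces $\ell(g)$ to be the unique morphism $(n(r),x_r)\to(n(r'),x_{r'})$ in $\int(\delta)$ lying over $n(g)$, which exists precisely when $\delta(n(g))(x_r)=x_{r'}$. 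Thus a lift $\ell\in\Gamma(n,\pi_\delta)$ is the same thing as a family $(x_r)_{r\in\Ob(R)}$ with $x_r\in(\delta\circ n)(r)$ such that $(\delta\circ n)(g)(x_r)=x_{r'}$ for every $g\taking r\to r'$ — and the functoriality of $\ell$ on composites and identities is automatic because morphisms in $\int(\delta)$ over a given morphism of $S$ are unique (equivalently, $\pi_\delta$ is faithful, Proposition~\ref{prop:relational fibrations are faithful}).

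Next I would observe that a compatible family $(x_r)_{r}$ as above is, by definition, exactly an element of the limit $\lim_R(\delta\circ n)$: a cone from the terminal set $\singleton$ to the diagram $\delta\circ n\taking R\to\Set$ assigns to each $r$ a function $\singleton\to(\delta\circ n)(r)$, i.e. an element $x_r$, subject to the cone condition $(\delta\circ n)(g)(x_r)=x_{r'}$ for all $g$, which is precisely our compatibility. So the map $\Phi\taking\Gamma(n,\pi_\delta)\to\lim_R(\delta\circ n)$ sending $\ell\mapsto(x_r)_r$ is well-defined, and the inverse sends a compatible family to the lift defined object-wise by $r\mapsto(n(r),x_r)$ and morphism-wise by the unique lift of $n(g)$; one checks this is functorial exactly because of the uniqueness just mentioned. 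This gives the bijection $\Gamma(n,\pi_\delta)\To{\iso}\lim_R(\delta\circ n)$.

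Finally I would note naturality: although the statement only asks for an isomorphism, the construction is manifestly natural in $n$ with respect to strict morphisms of probes, since precomposing $\ell$ with a strict morphism $f\taking R_1\to R_2$ corresponds on the limit side to the canonical map induced by $f$ on limits; this matches the functor $\Gamma(-,\pi)$ of Definition~\ref{def:strict morphisms}. I expect no serious obstacle here: the only point requiring a little care is verifying that the correspondence $\ell\leftrightarrow(x_r)_r$ genuinely respects all of the functor-structure of $\ell$ (composition and identities), and the resolution is simply to invoke the faithfulness of $\pi_\delta$ (or the explicit Hom-set description of $\int(\delta)$) so that $\ell$ is determined by, and imposes no further conditions beyond, its action on objects plus the compatibility constraints. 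Everything else is a routine unwinding of the universal property of the limit in $\Set$.
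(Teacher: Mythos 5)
Your argument is correct, but it takes a more hands-on route than the paper. The paper's proof is a two-line formal argument: it invokes Proposition \ref{prop:Grothendieck as pullback} to express $\int(\delta)\to S$ as the pullback of the universal set bundle $\Set_*\to\Set$ along $\delta$, deduces the bijection $\Hom_{\Cat_{/S}}(n,\pi_\delta)\iso\Hom_{\Cat_{/\Set}}(\delta\circ n,\pi)$ from the universal property of that pullback, and then identifies the right-hand side with $\lim_R(\delta\circ n)$ by the standard description of the limit of a set-valued functor as maps over $\Set$ into $\Set_*$. You instead unwind everything at the level of elements: a lift $\ell$ is exactly a compatible family $(x_r)_r$ with $x_r\in(\delta\circ n)(r)$ and $(\delta\circ n)(g)(x_r)=x_{r'}$, which is exactly an element of the limit; the uniqueness of morphisms in $\int(\delta)$ over a given morphism of $S$ (equivalently, faithfulness as in Proposition \ref{prop:relational fibrations are faithful}) guarantees that $\ell$ carries no data beyond its object part and no conditions beyond compatibility. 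In effect you are proving by hand the same ``standard formula'' that the paper cites, so your version is more self-contained and makes the bijection completely explicit, while the paper's version is shorter and gets naturality for free from the Hom-set manipulation; your closing remark on naturality with respect to strict morphisms of probes (Definition \ref{def:strict morphisms}) recovers that as well. No gaps.
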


\begin{proof}

Consider the diagram $$\xymatrix{&I\ar[r]\ar[d]_{\pi_\delta}\ullimit&\Set_*\ar[d]^\pi\\R\ar[r]_n&S\ar[r]_\delta&\Set}$$ where the right-hand square is a pullback, as shown in Proposition \ref{prop:Grothendieck as pullback}. We have a bijection $$\Hom_{\Cat_{/S}}(n,\pi_\delta)\iso\Hom_{\Cat_{/\Set}}(\delta\circ n,\pi).$$ The left-hand side is $\Gamma(n,\pi_\delta)$ and the right-hand side is a standard formula for the limit of a set-valued functor, in this case for $\lim_R(\delta\circ n)$.

%

\end{proof}

\subsection{Binding variables}\label{sec:binding variables}

In Section \ref{def:lifting constraints} we defined lifting constraints on a schema $S$ to be a pair of composable functors $W\To{m}R\To{n}S$. The idea is to think of $R$ as a set of equations (or a {\em join graph} and of $W$ as a set of variables to be bound at run-time. Our lifting approach below for queries will assume that the variables (in $W$) have already been bound to something in the active domain of $\pi$. As mentioned in the introduction, it is not standard to allow queries to depend on instances. In this short section we explain how to use where-less queries to determine the active domains. In this way, we will explain how instance-independent queries can be posed using the same lifting-problems approach. 

The idea is reminiscent of what is known in modern database practice as a {\em cursor}. Once the active domains for the variables in $W$ are found, one can either run the cursor (i.e. the query) parameterized over all values in these active domains, or prompt the user to choose bindings for these variables.

We assume for this section that $W$ is a discrete category; this will be most common in practice, but regardless all the ideas we will now discuss generalize to the non-discrete case.

Suppose given a cursor $W\To{m}R\To{n}S$. To determine the active domains of each variable in $W$, we simply apply the where-less query given by the diagram 
$$
\xymatrix{
&I\ar[d]^\pi\\
W\ar@{-->}[ur]\ar[r]_{n\circ m}&S
}
$$
The set of lifts $\Gamma(n\circ m,\pi)$ is the set of possible variable bindings. Once a lift $p\taking W\to I$ is chosen, we have a commutative square
$$\xymatrix{W\ar[r]^p\ar[d]_m&I\ar[d]^\pi\\R\ar@{-->}[ur]^\ell\ar[r]_n&S}
$$ 
and as we will see in Section \ref{sec:general lifting} below, the dotted arrow lifts $\ell$ will correspond to the results of the now-fully-defined query.

There is one more case we should discuss. Suppose one wants to pose a query such that it is not known in advance whether the chosen constants will or will not be available in the active domain---if they are not, the query must certainly return an empty set of results, and this is the intended behavior. In fact, this is the type of situation that is most often called a query in database literature. In the remainder of Section \ref{sec:binding variables}, we explain how this is handled by lifting queries.

Let $Dom$ denote the set of all possible domain values, let $\ol{Dom}$ denote the indiscrete category on $Dom$, and let $d\taking \ol{Dom}\times S\to S$ denote the projection.
\footnote{If one wants each table $s\in\Ob(S)$ to have its own data type, replace $d$ with the appropriate category over $S$.} 
Recall that for any category $\mcC$, the set of functions $\Ob(\mcC)\to Dom$ is in natural bijection with the set of functors $I\to\ol{Dom}$. 

We are given the shape of the query $W\To{m}R\To{n}S$, and we are also given, for each $w\in W$ a value $t(w)\in Dom$. In other words, our query is represented by the commutative square to the left
$$
\parbox{1.2in}{
\xymatrix@=35pt{
W\ar[r]^-{(t,n\circ m)}\ar[d]_m&\ol{Dom}\times S\ar[d]^d\\
R\ar[r]_n&S
}}
\hspace{.75in}
\parbox{1.2in}{
\xymatrix@=25pt{
&I\ar[d]^{(v,\pi)}\\
W\ar@{-->}[ur]^p\ar[r]\ar[d]_m&\ol{Dom}\times S\ar[d]^d\\
R\ar@{-->}[uur]\ar[r]_n&S
}}
$$
We also have are given a map $v\taking I\to\ol{Dom}$ that sends each datum $i\in\Ob(I)$ to its value in $Dom$. Form the solid-arrow diagram as to the right. As above, we perform the query in two steps. First we find all lifts $p\taking W\to I$ such that $(v,\pi)\circ p=(t,n\circ m)$. If this set is empty then the query will return an empty result set. However, if there do exist lifts $p$, then by choosing one, we bind our $W$-variables to their values found in the active domain. Finally, for each one we find all lifts $R\to I$ making the diagram to the right above commute. The set of all ways to do this is the set of results for our query.

\subsection{General lifting queries}\label{sec:general lifting}

In this section we tackle the more general lifting query. These closely resemble graph pattern queries, as used in SPARQL (see \cite{PS}). We will show how to perform queries like (and including) the one suggested in Example \ref{ex:query party setup}, where we hoped to find the last names of our new acquaintances, Bob and Sue. We begin with the definition.

\begin{definition}\label{def:query}

Let $S$ be a schema and $\pi\taking I\to S$ a discrete opfibration. A {\em query on $\pi$} is a solid-arrow commutative diagram of the form 
\begin{align}\label{dia:define queries}
\xymatrix{W\ar[r]^p\ar[d]_m&I\ar[d]^\pi\\R\ar@{-->}[ur]^\ell\ar[r]_n&S}
\end{align}
The categories $W$ and $R$ are called the {\em where-category} and the {\em result schema}, respectively. We define the {\em set of solutions} to the query, denoted $\Gamma^{m,p}(n,\pi)$, to be the set of lifts $\ell$ making the diagram commute. Precisely, $$\Gamma^{m,p}(n,\pi):=\{\ell\taking R\to I\;|\; \pi\circ\ell=n\;\;\tn{and}\;\;\ell\circ m=p\}.$$ 

\end{definition}

\begin{example}\label{ex:SQL statement}

By this point, we have developed the theory necessary to make sense of the following dictionary.

\begin{align*}
\parbox{.1in}{\xymatrix{&W\ar[r]^p\ar[d]_m&I\ar[d]^\pi\\X\ar[r]_q&R\ar@{-->}[ur]^\ell\ar[r]_n&S}}
\hspace{1in}
\parbox{1.4in}{
\begin{tabbing}
SELECT\;\; \=$X\To{q}R$\\
FROM \>$R\To{n}S$\\
WHERE \>$R\From{m}W\To{p}I$
\end{tabbing}}
\end{align*} 

Each lift $\ell$ in the commutative square is a solution to the SELECT $\ast$ statement, and composing $\ell$ with $q$ projects to schema $X$.

\end{example}

The following proposition says that for any query on a dataset $\delta$, there is a canonical embedding of the query result back into $\delta$.

\begin{proposition}\label{prop:query result state}

Let $\delta\taking S\to\Set$ be a instance on a schema and $\pi\taking I\to S$ the associated discrete opfibration. Suppose given a query (lifting problem) $$\xymatrix{W\ar[r]^p\ar[d]_m&I\ar[d]^{\pi}\\R\ar[r]_n\ar@{-->}[ur]&S}$$ with solution set $\Gamma^{m,p}(n,\pi)\in\Set.$ Considering this set as a constant functor $\Gamma\taking R\to\Set$ (given by $\Gamma(r)=\Gamma^{m,p}(n,\pi)$ for all $r\in\Ob(R)$), there is an induced map of $R$-sets, $$\Res\taking\Gamma\to \pullb{n}\delta.$$

\end{proposition}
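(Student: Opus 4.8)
The plan is to construct the map $\Res$ objectwise and then check naturality. Fix an object $r\in\Ob(R)$; I need a function $\Gamma^{m,p}(n,\pi)\to(\pullb{n}\delta)(r)=\delta(n(r))$. Recall that under the correspondence between the relational fibration $\pi\taking I\to S$ and the functor $\delta\taking S\to\Set$ we have $\pi^\m1(s)\iso\delta(s)$ for each $s\in\Ob(S)$ (this is the recap following Proposition \ref{prop:relational fibrations are discrete}, or can be extracted from Proposition \ref{prop:Grothendieck as pullback}). Given a lift $\ell\taking R\to I$ with $\pi\circ\ell=n$, the object $\ell(r)\in\Ob(I)$ satisfies $\pi(\ell(r))=n(r)$, so $\ell(r)\in\pi^\m1(n(r))\iso\delta(n(r))$. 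Define $\Res_r(\ell):=\ell(r)$ under this identification. This is the only reasonable definition, and the point of the proposition is simply that as $r$ varies these assemble into a map of $R$-sets.

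Next I would verify that $\Res=\{\Res_r\}_{r\in\Ob(R)}$ is natural, i.e.\ a morphism in $R\set$. Let $g\taking r\to r'$ be a morphism in $R$. I must show the square relating $\Res_r$ and $\Res_{r'}$ commutes: since the source $\Gamma$ is the constant functor, its structure map $\Gamma(g)$ is the identity, so what needs checking is that for every lift $\ell$, the element $(\pullb{n}\delta)(g)$ carries $\Res_r(\ell)$ to $\Res_{r'}(\ell)$. Unwinding, $(\pullb{n}\delta)(g)=\delta(n(g))$, and the claim becomes $\delta(n(g))(\ell(r))=\ell(r')$. But $\ell$ is a functor, so $\ell(g)\taking\ell(r)\to\ell(r')$ is a morphism in $I$ with $\pi(\ell(g))=n(g)$; translating through $\pi^\m1(n(g))\iso\delta(n(g))$ (the fact that, for a relational fibration, the preimage of an arrow is a genuine function between fibers — see the discussion before Proposition \ref{prop:relational fibrations are discrete}) gives exactly $\delta(n(g))(\ell(r))=\ell(r')$. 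Hence the square commutes and $\Res$ is a well-defined map of $R$-sets.

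The only mild subtlety — and the step I'd expect to require the most care — is keeping the identifications $\pi^\m1(s)\iso\delta(s)$ and $\pi^\m1(f)\iso\delta(f)$ honest and coherent, so that ``$\ell(r)$ as an object of $I$'' and ``$\ell(r)$ as an element of $\delta(n(r))$'' really do fit together with the functoriality of $\ell$. This is bookkeeping rather than a genuine obstacle: it follows from Proposition \ref{prop:Grothendieck as pullback}, which exhibits $\int(\delta)$ as the pullback $S\times_{\Set}\Set_*$, together with the definition of the category of elements. I would phrase the argument directly in terms of $\int(\delta)$: an object of $\int(\delta)$ over $s$ is a pair $(s,x)$ with $x\in\delta(s)$, a lift $\ell$ sends $r$ to a pair $(n(r),x_r)$, and functoriality of $\ell$ applied to $g\taking r\to r'$ forces $\delta(n(g))(x_r)=x_{r'}$ by the definition of $\Hom_{\int(\delta)}$; setting $\Res_r(\ell)=x_r$ then makes naturality immediate. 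A closing remark worth including is that $\Res$ need not be injective or surjective in general (distinct lifts can agree at a given object, and not every element of $\delta(n(r))$ need extend to a lift), so ``embedding'' in the statement refers to the canonical nature of the map, not to monomorphism; I would soften or footnote that wording.
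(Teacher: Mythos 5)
Your proof is correct, and it takes a genuinely different route from the paper's. You build $\Res$ objectwise—$\Res_r(\ell)$ is the $\delta$-component $x_r$ of $\ell(r)=(n(r),x_r)\in\Ob(\int(\delta))$—and then verify naturality by hand, using the defining condition $\delta(f)(x)=x'$ for morphisms in the category of elements; this is sound, and your worry about keeping the identification $\pi^\m1(s)\iso\delta(s)$ coherent evaporates exactly as you say once you work literally in $\int(\delta)$, which the hypotheses permit since $\pi$ is the relational fibration associated to $\delta$. The paper instead argues abstractly: it includes $\Gamma^{m,p}(n,\pi)\inj\Gamma(n,\pi)$, identifies $\Gamma(n,\pi)\iso\lim_R(\delta\circ n)\iso\rpush{t}(\delta\circ n)$ for the terminal functor $t\taking R\to[0]$ (via Proposition \ref{prop:where-less}), and then transposes across the $(\pullb{t},\rpush{t})$-adjunction, noting $\pullb{t}(\Gamma^{m,p}(n,\pi))=\Gamma$ and $\delta\circ n=\pullb{n}\delta$. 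The two constructions produce the same morphism (unwinding the adjunction transpose gives precisely your $\ell\mapsto x_r$), so the difference is one of packaging: the paper's argument gets naturality for free from the adjunction and exhibits how the WHERE-data enters only through the inclusion of solution sets, while yours is more elementary and makes the map completely explicit, which is what one actually uses when displaying a result as a table as in Example \ref{ex:bob and sue revisited}. Your closing caveat is also apt: the proposition asserts only an induced map, and $\Res_r$ need not be injective or surjective, so the word ``embedding'' in the surrounding prose should indeed be read as ``canonical map.''
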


\begin{proof}

Let $\Gamma(n,\pi)=\{\ell\taking R\to I \;|\;\pi\circ\ell=n\}$ denote the set of solutions to the where-less query $n\taking R\to S$. Clearly, we have an inclusion $\Gamma^{m,p}(n,\pi)\inj\Gamma(n,\pi)$. By Proposition \ref{prop:where-less}, there is an isomorphism $\Gamma(n,\pi)\iso\lim_R(\delta\circ n).$ 

Let $t\taking R\to\ul{1}$ denote the terminal functor. It follows from definitions that for any functor $G\taking R\to\Set$, there is an isomorphism of $[0$]-Sets, $\lim_{R}(G)\iso \rpush{t}(G)$, so in particular we have an inclusion $\Gamma^{m,p}(n,\pi)\to \rpush{t}(\delta\circ n).$ By the $(\pullb{t},\rpush{t})$-adjunction, there is an induced map $$\pullb{t}(\Gamma^{m,p}(n,\pi))\to(\delta\circ n)$$ of $R$-sets. The result follows, since $\pullb{t}(\Gamma^{m,p}(n,\pi))=\Gamma$ and $\delta\circ n=\pullb{n}\delta$. 

\end{proof}

\begin{example}[Bob and Sue, revisited]\label{ex:bob and sue revisited}

The motivating example for this paper was presented in Section \ref{sec:main example}. In particular, we provided a SPARQL query to find all instances of married couples with the requisite characteristics (e.g. the husband's and wife's first names being Bob and Sue respectively). We showed that this SPARQL query could be straightforwardly transformed into a lifting problem of the form 
$$\xymatrix{
&W\ar[r]^p\ar[d]_m&I\ar[d]^\pi\\
X\ar[r]_q&R\ar[r]_n\ar@{-->}[ur]^\ell&S
}
$$
as in (\ref{dia:qp2 lifting}), and we specified the two functors $W\To{m}R\To{n}S$. We did not specify the discrete opfibration $I\To{\pi}S$ or the inclusion of the known data $p\taking W\to I$, because writing out a convincing possibility for $I$ would necessitate too much space to be worthwhile in this document.

The lifting diagram (\ref{dia:qp2 lifting}) was presumed to have only one solution, because it was presumed that we knew enough about Bob and Sue that no one else fit the description. In the language of Definition \ref{def:query}, the set $\Gamma^{m,p}(n,q)$ has one element. By Proposition \ref{prop:query result state}, this element can be written as a database state on $R$. We output the result as a two-level table with one row in (\ref{dia:result state}), repeated here, 
\begin{align*}
\footnotesize
\begin{tabular}{| c ||| c || c | c | c | c || c | c | c |}
\bhline
\multicolumn{9}{| c |}{\bf Marriage}\\
\bhline 
\multirow{2}{*}{\bf ID}&\multicolumn{4}{ c |}{\bf Husband}&\multicolumn{4}{ c |}{\bf Wife}\\\cline{2-9}\cline{2-9}
&{\bf ID}&{\bf First}&{\bf Last}&{\bf City}&{\bf ID}&{\bf First}&{\bf Last}&{\bf City}\\
\bbbhline G3801&M881-36&Bob&Graf&Cambridge&W913-55&Sue&Graf&Cambridge\\
\bhline
\end{tabular}
\end{align*}
which in fact was a state on a schema $X\To{q} R$, where $X$ is the schema
\begin{align}
\tiny\parbox{4.3in}{
\begin{center}X:=\end{center}
\fbox{\xymatrix@=14pt{&&&\obox{G}{.4in}{Marriage}\ar[dll]\ar[drr]\\&\obox{P1}{.4in}{Husband}\ar[d]\ar[dr]\ar[dl]&&&&\obox{P2}{.2in}{Wife}\ar[d]\ar[dl]\ar[dr]&\\\obox{F1}{.2in}{First}&\obox{L1}{.2in}{Last}&\obox{C1}{.2in}{City}&&\obox{C2}{.2in}{City}&\obox{L2}{.2in}{Last}&\obox{F2}{.2in}{First}}}
}
\end{align}

While we have not discussed two-level tables before, we hope the idea is straightforward.

\end{example}

\subsection{SPARQL queries involving predicate variables}

In Example \ref{ex:query party setup} our SPARQL query (\ref{dia:SPARQL}) only has variables in subject and object positions (the nodes of the schema). It seems that most SPARQL queries used in practice also only have variables in the subject and object positions (see, e.g. \cite{DZS}); still, general SPARQL queries can involve variables in any position including in predicate positions, which correspond to the arrows of the schema. For example, we may use 
\begin{align}\label{dia:John Mary}
\tn{(John ?x Mary)}
\end{align}
to find all known relationships between John and Mary. To deal with this type of query, one may proceed as follows. 

If $S=(V,E,s,t)$ is a graph (thought of as a schema with trivial path equivalences, which is in keeping with RDF schemas), then $S$ itself can be viewed as a database instance $S\taking\mcG\to\Set$ on the schema 
$$\mcG=\fbox{\xymatrix{\LTO{Verb}\ar@/^1pc/[rr]^{\tn{subj}}\ar@/_1pc/[rr]_{\tn{obj}}&&\LTO{Noun}}}$$ 
similar to Example \ref{ex:vertex is source}. We will be working with the Grothendieck construction $\int(S)\to\mcG$. The category $\int(S)$ will be generated by a bipartite graph. The set of vertices in $\int(S)$ is the union $\tt{Noun}\amalg \tt{Verb}$, we might call them verb vertices and noun vertices. There is a unique edge in $\int(S)$ from every verb vertex to its subject noun vertex and another to its object noun vertex. 

\begin{example}\label{ex:elements of graph}

Let $X=(V,E,s,t)$ be the graph to the left below:
$$X:=\parbox{1.5in}{\fbox{
\xymatrix{
\LTO{John}\LA{r}{livesIn}\LAL{d}{sonOf}&\LTO{Iowa}\\\LTO{Mary}
}}}\hspace{.5in}
\varint(X):=\parbox{1.5in}{
\fbox{\xymatrix{
\LTO{John}&\LTO{livesIn}\LAL{l}{subj}\LA{r}{obj}&\LTO{Iowa}\\
\LTO{sonOf}\LAL{u}{subj}\LA{d}{obj}\\
\LTO{Mary}
}}}
$$
If $X$ is conceived as an instance $X\taking\mcG\to\Set$, then $\int(X)$ is the category to the right above. 

\end{example}

An instance $\pi\taking I\to S$ can be considered simply as a map of graphs, i.e. a map of instances on $\mcG$. Taking its Grothendieck construction yields a functor $\int(I)\to\int(S)$, whereby each arrow from $S$ (representing a foreign key column) and each arrow from $I$ (representing a cell in a foreign key column) have become a vertex in $\int(S)$ and $\int(I)$ respectively, as in Example \ref{ex:elements of graph}. We can perform the original SPARQL query (\ref{dia:John Mary}) to this derived form of the database because our original predicate can now be accessed as a subject or an object. For example our statement (John ?x Mary) would become the pair of statements (?x subj John) (?x obj Mary).

\section{The category of queries on a database}\label{sec:formal properties}

In this section we will discuss some formal properties of the machinery developed in earlier sections. For example we will show that the queries on a given database can be arranged into a database of their own and subsequently queried. This process is commonly known as nesting queries. To this end, we define a category of queries and prove that the process of finding solutions is functorial. We do this in Sections \ref{sec:new discrete opfibration} and \ref{sec:category of queries}. In Section \ref{sec:migration} we extend some results from Section \ref{sec:constraint implications}, giving more detail on the interaction between data migration functors, on the one hand, and query containment and constraint implication on the other.

This section is technical, but it may have fruitful applications. Given any database $\pi$, the category $\Qry(\pi)$ organizes the queries (or views) on $\pi$ into a schema of their own. There is a canonical instance on $\Qry(\pi)$ populating each table (corresponding to a query) with its set of results. In typical applications, users of a database $\pi$ are often better served by interacting with $\Qry(\pi)$ rather than with $\pi$. It is important to understand how schema evolution affects different parts of $\Qry(\pi)$; this is briefly discussed in Section \ref{sec:migration}.

\subsection{New discrete opfibrations from old}\label{sec:new discrete opfibration}

The following theorem is not new, but its formulation in terms of databases is. Furthermore, the proof may be instructive.

\begin{theorem}\label{thm:query discrete opfibration}

Let $\pi\taking I\to S$ be a discrete opfibration and let $B$ be a category. Then the induced functor $\pi^B\taking I^B\to S^B$ is a discrete opfibration. If $\delta^B=\del(\pi^B)\taking S^B\to\Set$ is the associated instance, then for any $F\taking B\to S$ in $\Ob(S^B)$, there is a bijection $$\delta^B(F)\iso\Gamma(F,\pi).$$

\end{theorem}

\begin{proof}

We begin our proof of the first claim by drawing a figure for reference: 
\begin{align}\label{dia:natural transformation}
\xymatrix{&&I\ar[dd]^\pi\\\\
B\ar[uurr]^{\ell_1}\ar@/^1pc/[rr]^{F_1}\ar@/_1pc/[rr]_{F_2}\ar@{}[rr]|{\Down\alpha}&&S}
\end{align} 
To see that $\pi^B$ is a discrete opfibration, suppose that $F_1,F_2\taking B\to S$ are functors and $\alpha\taking F_1\to F_2$ is a natural transformation. Given a functor $\ell_1\taking B\to I$ with $\pi\circ\ell_1=F_1$, we must show that there exists a unique functor $\ell_2\taking B\to I$ and natural transformation $\beta\taking\ell_1\to\ell_2$ such that $\pi\circ\ell_2=F_2$ and $\pi\circ\beta=\alpha$. For any object $b\in\Ob(B)$, the map $\alpha_b\taking F_1(b)\to F_2(b)$ in $S$ together with the object $\ell_1(b)\in I$, such that $\pi(\ell_1(b))=F_1(b)$, induces a unique arrow $\beta_b\taking\ell_1(b)\to i_b$ in $I$ for some $i_b\in\Ob(I)$, because $\pi$ is a discrete opfibration. Define $\ell_2(b)=i_b$. This defines $\ell_2\taking B\to I$ on objects. 

Now suppose that $f\taking b\to b'$ is any morphism in $B$. Applying what we have so far, we get a functor $X\to Y$, where $X$ is the solid-arrow portion of the category to the left and $Y$ is the commutative square category to the right, 
\begin{align}\label{dia:crescent square}
X:=\parbox{1.4in}{\fbox{\xymatrix{\ell_1(b)\ar[r]^{\beta_b}\ar[d]_{\ell_1(f)}&\ell_2(b)\ar@{..>}[d]^?\\\ell_1(b')\ar[r]_{\beta_{b'}}&\ell_2(b')}}} 
\To{\hsp}
\parbox{1.6in}{\fbox{\xymatrix{F_1(b)\ar[r]^{\alpha_b}\ar[d]_{F_1(f)}&F_2(b)\ar[d]^{F_2(f)}\\F_1(b')\ar[r]_{\alpha_{b'}}&F_2(b')}}}=:Y
\end{align}
and we get a commutative diagram $$\xymatrix{X\ar[r]\ar[d]&I\ar[d]^\pi\\Y\ar[r]&S}$$ In order to complete our definition of $\ell_2$, our goal is to fill in the missing side (the dotted arrow labeled ``?") in square $X$. 

The map $F_2(f)\taking F_2(b)\to F_2(b')$ in $S$ together with the object $\ell_2(b)\in\Ob(I)$ with $\pi(\ell_2(b))=F_2(b)$ induces a unique arrow $h_{b'}\taking\ell_2(b)\to j_{b'}$ for some $j_{b'}\in\Ob(I)$ with $\pi(j_b)=b'$. But now we have two maps in $I$ over the composite $F_1(b)\to F_2(b')$ both with source $\ell_1(b)\in\Ob(I)$, namely $\beta_{b'}\circ\ell_1(f)\taking\ell_1(b)\to\ell_2(b')$ and $h_{b'}\circ \beta_b\taking\ell_1(b)\to j_{b'}$. Since $\pi$ is a discrete opfibration, their codomains must be equal, so we have a map $\ell_2(f):=h_{b'}\taking\ell_2(b)\to \ell_2(b')=j_{b'}$, and we have completed the commutative square $X$ in Diagram (\ref{dia:crescent square}). We have now defined our functor $\ell_2\taking B\to I$ and natural transformation $\beta\taking\ell_1\to\ell_2$ over $\alpha$, and they are unique: we made no choices in their constructions. We have shown that $\pi^B\taking I^B\to S^B$ is a discrete opfibration. 

Let $\delta^B:=\del(\pi^B)\taking S^B\to\Set$ be the instance associated to $\pi^B$ and let $F\in\Ob(S^B)$ be an object. We can consider $F$ as a map $\ul{1}\To{F}S^B$, and $\delta^B(F)$ is isomorphic to the set of lifts in the left-hand diagram $$\xymatrix{&I^B\ar[d]^{\pi^B}\\\ul{1}\ar@{-->}[ur]\ar[r]_F&S^B}\hsp\hsp\xymatrix{&I\ar[d]^\pi\\B\ar@{-->}[ur]\ar[r]_F&S}$$ which by adjointness is in bijection with the set of lifts $\Gamma(F,\pi)$ in the right-hand diagram. Therefore we have $\delta^B(F)\iso\Gamma(F,\pi)$, completing the proof.

\end{proof}

The following definition of $\Prb(S)$ extends the notion of $\Prbs(S)$ from Definition \ref{def:strict morphisms}: $\Prbs(S)\ss\Prb(S)$ is a subcategory with the same set of objects. The category $\Prb(S)$ is a 2-category-theoretic version of slice categories, and is not new (see e.g. \cite{Kel}).

\begin{definition}\label{def:probes}

Let $S$ be a category. We define the {\em category of probes on $S$}, denoted $\Prb(S)$, as follows. \begin{align*}\Ob(\Prb(S))&=\{(A,F)\;|\; A\in\Ob(\Cat), F\taking A\to S \tn{ a functor}\}\\\Hom_{\Prb(S)}((A,F),(A',F'))&=\{G,\alpha)\;|\;G\taking A'\to A,\;\; \alpha\taking F\circ G\to F'\}\end{align*}$$\\\xymatrix{A'\ar[r]^G\ar@/_1.5pc/[rr]_{F'}&A\ar[r]^F\ar@{}[d]|(.35){\Down \alpha}&S\\~&~&~}$$ 

\end{definition}

\begin{remark}

In the presence of a discrete opfibration $\pi\taking I\to S$, a probe $F\taking A\to S$ sets up a {\em where-less query} on $\pi$ for which the results are the lifts $\ell\in\Gamma(F,\pi)$ for the diagram $$\xymatrix{\emptyset\ar[r]\ar[d]&I\ar[d]^\pi\\A\ar[r]_F\ar@{-->}[ru]^\ell&S.}$$ We call these where-less queries to emphasize that the where-category (upper left of the diagram) is empty.

\end{remark}

For any category $B$, there is an obvious functor $S^B\to\Prb(S)$. The following corollary extends Theorem \ref{thm:query discrete opfibration} in the obvious sense. One way to understand its content is that we can query over where-less queries. In other words, this is a formalization of nested queries. For example, we can create a join graph  of where-less queries and look for a set of coherent results. Corollary \ref{cor:new discrete opfibration from old} (which is not new) implies that given a morphism between two where-less queries on $S$ and given a result for the first query, there is an induced result for the second query. We will deal with the general case of nested queries (those having non-trivial where-categories) in Proposition \ref{prop:induced maps for queries}.

\begin{corollary}\label{cor:new discrete opfibration from old}

Let $\pi\taking I\to S$ be a discrete opfibration. Then the induced functor $$\ol{\pi}=\Prb(\pi))\taking\Prb(I)\to\Prb(S)$$ is a discrete opfibration. The instance associated to $\pi$ is $$\Gamma(-,\pi)=\del(\ol{\pi})\taking\Prb(S)\to\Set.$$

\end{corollary}

\begin{proof}

Proving this corollary is really just a matter of writing down the appropriate diagram. In order to show that $\ol{\pi}$ is a discrete opfibration, we choose an object $\ell\taking A\to I$ in $\Prb(I)$ with $\ol{\pi}(\ell)=F\taking A\to S$, we choose a morphism $(G,\alpha)\taking (A,F)\to (A',F')$ in $\Prb(S),$ and we show that there exists a unique morphism $(G,\beta)\taking(A,\ell)\to(A',\ell')$ in $\Prb(I)$, for some $\ell'\taking A'\to I$, such that $\pi\circ\beta=\alpha$. In diagrams, we begin with the solid-arrow diagram \begin{align}\label{dia:new discrete opfibration from old}\xymatrix{&&I\ar[d]^\pi\\A'\ar[r]^G\ar@/_1.5pc/[rr]_{F'}\ar@/^1pc/@{-->}[urr]^{\ell'}&A\ar@{}[u]|{\Uparrow\beta}\ar@{}[d]|(.35){\Downarrow\alpha}\ar[ur]^\ell\ar[r]^F&S\\~&~&~}\end{align} and hope to find such an $\ell'\taking A'\to I$ and $\beta\taking\ell\circ G\to\ell'$.

We have $\pi\circ(\ell\circ G)=F\circ G$. Applying Theorem \ref{thm:query discrete opfibration}, there is a unique induced functor $\ell'\taking A'\to I$ and natural transformation $\beta\taking\ell\to\ell'$ such that $\pi\circ\beta=\alpha$, having the required properties. This completes the proof.

\end{proof}

\begin{remark}

There is a way to express the set of solutions to a lifting problem using limits. Let $\pi\taking I\to S$ be a discrete opfibration, and consider the query $$\xymatrix{W\ar[r]^p\ar[d]_m&I\ar[d]^\pi\\R\ar[r]_n&S.}$$ We can consider $m$ as a strict morphism of probes on $S$, so it induces a function $\Gamma(m,\pi)\taking\Gamma(n,\pi)\to\Gamma(nm,\pi)$, and we can consider $p\in\Gamma(nm,\pi)$ as an element in the codomain. There is a bijection \begin{align}\label{dia:Gamma-m,p}\Gamma^{m,p}(n,\pi)\iso\Gamma(n,\pi)\cross_{\Gamma(nm,\pi)}\{p\},\end{align} expressing the set $\Gamma^{m,p}(n,\pi)$ of solutions to the lifting problem as the fiber of $\Gamma(m,\pi)$ over $p$. This idea may be useful when one has disjunctions in the WHERE-clause of a query, as one could replace $\{p\}$ with the set of disjuncts.

\end{remark}

Next we present examples of two types of morphisms of where-less queries, namely projection and indirection. These types generate all morphisms of where-less queries.

\begin{example}[Projection]\label{ex:projection}

Let $\delta\taking S\to\Set$ be an instance and let $\pi\taking I\to S$ be the associated discrete opfibration. Let $n\in\NN$ be a natural number. The {\em $n$-column table schema}, here denoted $C_n$, is the category with an initial object $K$, precisely $n$ other objects, and precisely $n$ non-identity arrows; it follows that $C_n$ looks like an asterisk (or ``star schema"), e.g. $C_4$ is drawn: 
$$\xymatrix{
&c_1\\c_4&K\ar[u]\ar[l]\ar[r]\ar[d]\ar[r]&c_2\\&c_3}$$ A functor $p\taking C_n\to S$ is called an {\em $n$-column table schema in $S$}.  For each object $x\in C_n$, we call $p(x)\in\Ob(S)$ a {\em column of $p$} and we call $p(K)$ the {\em primary key column of $p$}. In fact, $p$ is a probe or where-less query. The result set $\Gamma(p,\pi)$ can be thought of as the set of records for instance $\delta$ in table $p$; indeed $\Gamma(p,\pi)$ is isomorphic to $\delta(p)(K)$ as sets.

For any injection $h\taking\{1,2,\ldots,n'\}\inj\{1,2,\ldots,n\}$, there is an induced functor $C(h)\taking C_{n'}\to C_n$, which we can compose with $p$ to get a new morphism $p':=p\circ C(h)\taking C_{n'}\to S$ and a strict morphism of probes $p\to p'$. A record in table $p$ is given by a lift $\ell$ as shown to the left: 
\begin{align}\label{dia:higher queries}
\xymatrix{&&I\ar[d]^\pi\\C_{n'}\ar@/_1pc/[rr]_{p'}\ar[r]^{C(h)}&C_n\ar[ur]^\ell\ar[r]^p&S,}
\hspace{.7in}
\xymatrix{\fbox{$\LMO{\ell}$\hspace{.6in}}\ar[r]\ar[d]&\Prb(I)\ar[d]^{\Prb(\pi)}\\\fbox{$\LMO{p}\Too{C(h)}\LMO{p'}$}\ar[r]&\Prb(S)}
\end{align} 
and composing $\ell$ with $C(h)$ gives its projection as a record in table $p'$. Thus $h$ induces a function $\Gamma(p,\pi)\to\Gamma(p',\pi)$, and its image is the associated projection. The righthand diagram in (\ref{dia:higher queries}) is another way of viewing the lefthand diagram.

\end{example}

\begin{remark}

In Example \ref{ex:projection}, we did not really need to assume that the function $h$ was injective. If $h$ were not injective, then the morphism of queries $C(h)$ would result in some duplication of columns rather than a pure projection. In other words, the morphism of queries is simply given by substitution along the function $C(h)$.

\end{remark}

In Example \ref{ex:projection} we changed the shape of the result schema and used a strict morphism of probes (the natural transformation $p\circ C(h)\to p'$ was the identity). In Example \ref{ex:indirection} we will keep the result schema fixed but allow a non-strict morphism. 

\begin{example}[Indirection]\label{ex:indirection}

Let $R=[1]=\fbox{$\bullet^0\To{\;f\;}\bullet^1$}$ and let $S$ be the schema $$S:=\fbox{\xymatrix@=15pt{\obox{A}{.5in}{a person}\LA{rr}{lives at}&&\obox{B}{.7in}{an address}\LA{rr}{is in}&&\obox{C}{.4in}{a city}}}$$ There are three non-constant functors $R\to S$, which we denote $F_{AB}, F_{AC},$ and $F_{BC}$; there is a natural transformation $\alpha\taking F_{AB}\to F_{AC}$ and a natural transformation $\beta\taking F_{AC}\to F_{BC}$. Thus we get two morphisms in $\Prb(S)$, namely $$(\id_R,\alpha)\taking (R,F_{AB})\to (R,F_{AC})\hsp\tn{and}\hsp(\id_R,\beta)\taking (R,F_{AC})\to (R,F_{BC}).$$ 

Suppose $\pi\taking I\to S$ is an instance. We can draw the setup as 
$$\xymatrix{&&&I\ar[d]^\pi\\R\ar@{=}[r]\ar@/_3pc/[rrr]_(.3){F_{BC}}&R\ar@{}[d]|{\Down\beta}\ar@{=}[r]\ar@/_2pc/[rr]_(.35){F_{AC}}&R\ar@{}[d]|(.4){\Down\alpha}\ar[r]_{F_{AB}}&S\\&&&}
$$ 
We can take global sections $\Gamma(-,\pi)$ for each of these three probes and obtain maps between the result sets by Theorem \ref{thm:query discrete opfibration}: $$\Gamma(F_{AB},\pi)\To{\;\;\alpha\;\;}\Gamma(F_{AC},\pi)\To{\;\;\beta\;\;}\Gamma(F_{BC},\pi).$$ In other words, the morphism of queries induces a morphism of result sets. Simply, given some person and her address we can return a person and the city she lives in; given some person and his city we can return an address and the city it is in.

\end{example}

\subsection{The category of queries}\label{sec:category of queries}

We are now ready to generalize the category $\Prb(S)$ of where-less queries on $S$ to a category of all (lifting) queries on $S$.

\begin{definition}\label{def:cat of queries}

Let $\pi\taking I\to S$ denote a discrete opfibration. We define the {\em category of (lifting) queries on $\pi$}, denoted $\Qry(\pi)$ as follows. The objects of $\Qry(\pi)$ are commutative diagrams as to the left $$\xymatrix{&&\\W\ar[r]^p\ar[d]_m&I\ar[d]^\pi\\R\ar[r]_n&S\\&&}\hsp\hsp\xymatrix{&&\\W'\ar[r]_G\ar@/^1.4pc/[rr]^{p'}\ar[d]_{m'}&W\ar@{}[u]|(.3){\Uparrow\gamma}\ar[d]_m\ar[r]_p&I\ar[d]^\pi\\R'\ar[r]^F\ar@/_1.4pc/[rr]_{n'}&R\ar@{}[d]|(.3){\Down\alpha}\ar[r]^n&S\\&&}$$ and the morphisms $(F,G,\alpha,\gamma)\taking (R,W,n,p)\to (R',W',n',p')$ are diagrams as to the right, where $m\circ G=F\circ m'$ and where
$\pi\circ\gamma=\alpha\circ m'.$

\end{definition}

\begin{proposition}\label{prop:induced maps for queries}

Let $\pi\taking I\to S$ be a discrete opfibration, and suppose given the diagram to the left, where the two squares commute: 
$$\xymatrix{&&\\W'\ar[r]_G\ar[d]_{m'}&W\ar[d]_m\ar[r]_p&I\ar[d]^\pi\\R'\ar[r]^F\ar@/_1.4pc/[rr]_{n'}&R\ar@{}[d]|(.3){\Down\alpha}\ar[r]^n&S\\&&}
\hspace{.7in}
\xymatrix{&&\\W'\ar[r]_G\ar@/^1.4pc/[rr]^{p'}\ar[d]_{m'}&W\ar@{}[u]|(.3){\Uparrow\gamma}\ar[d]_m\ar[r]_p&I\ar[d]^\pi\\R'\ar[r]^F\ar@/_1.4pc/[rr]_{n'}&R\ar@{}[d]|(.3){\Down\alpha}\ar[r]^n&S\\&&}
$$
Then there exists a unique morphism of queries $$(F,G,\alpha,\gamma)\taking (R,W,n,,p)\to (R',W',n',p')$$ as to the right.

\end{proposition}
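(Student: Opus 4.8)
The plan is to reduce the construction of the missing data $(p',\gamma)$ to the unique lifting property of the relational fibration $\pi^{W'}\taking I^{W'}\to S^{W'}$, whose existence is Theorem \ref{thm:query relational fibration} applied with $B=W'$. The hypotheses supply the functors $F\taking R'\to R$, $G\taking W'\to W$, $m'\taking W'\to R'$, $n'\taking R'\to S$, the natural transformation $\alpha\taking n\circ F\Rightarrow n'$, and the query $(R,W,n,p)$, so that $\pi\circ p=n\circ m$ and the left-hand where-square commutes, $m\circ G=F\circ m'$. What must be produced is a functor $p'\taking W'\to I$ and a natural transformation $\gamma\taking p\circ G\Rightarrow p'$ with $\pi\circ\gamma=\alpha\circ m'$, together with the assertion that this pair is unique.

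First I would assemble a lifting problem inside $I^{W'}$. The functor $\ell_1:=p\circ G\taking W'\to I$ is an object of $I^{W'}$, and it lies over
$$\pi\circ\ell_1=\pi\circ p\circ G=n\circ m\circ G=n\circ F\circ m'\in\Ob(S^{W'}),$$
where the second equality is commutativity of the query square and the third is $m\circ G=F\circ m'$ post-composed with $n$. Whiskering $\alpha\taking n\circ F\Rightarrow n'$ on the right by $m'\taking W'\to R'$ yields a morphism $\alpha\circ m'\taking n\circ F\circ m'\Rightarrow n'\circ m'$ in $S^{W'}$ whose domain is exactly $\pi\circ\ell_1$.

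Next I would invoke the relational-fibration property of $\pi^{W'}$ (spelled out in the proof of Theorem \ref{thm:query relational fibration}): given the object $\ell_1\in I^{W'}$ over $\pi\circ\ell_1$ and the morphism $\alpha\circ m'$ of $S^{W'}$ issuing from $\pi\circ\ell_1$, there is a unique object $p'\in\Ob(I^{W'})$ together with a unique morphism $\gamma\taking\ell_1\Rightarrow p'$ of $I^{W'}$ satisfying $\pi\circ\gamma=\alpha\circ m'$. Unwound, $p'\taking W'\to I$ is a functor and $\gamma\taking p\circ G\Rightarrow p'$ is a natural transformation; reading off the domain and codomain of the equation $\pi\circ\gamma=\alpha\circ m'$ gives $\pi\circ p\circ G=n\circ F\circ m'$ and $\pi\circ p'=n'\circ m'$, so $(R',W',n',p')$ is an object of $\Qry(\pi)$ and $(F,G,\alpha,\gamma)$ satisfies the defining condition of a morphism in Definition \ref{def:cat of queries}. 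Uniqueness is immediate from the same lifting property: any $(F,G,\alpha,\gamma')\taking(R,W,n,p)\to(R',W',n',p'')$ extending the given $F,G,\alpha$ would give a second lift $(p'',\gamma')$ of $\alpha\circ m'$ through $\pi^{W'}$ starting at $\ell_1$, forcing $p''=p'$ and $\gamma'=\gamma$.

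I do not anticipate a real obstacle: the statement is essentially a bookkeeping consequence of Theorem \ref{thm:query relational fibration}. The only delicate point is tracking the whiskered natural transformations so that their sources and targets line up --- in particular checking $\pi\circ p\circ G=n\circ F\circ m'$, which is exactly where the commutativity of both the query square and the where-square is consumed.
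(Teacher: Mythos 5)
Your argument is correct and is essentially the paper's own proof: the paper likewise applies Theorem \ref{thm:query relational fibration} with $B=W'$, taking $\ell_1=p\circ G$ over $n\circ F\circ m'$ and the whiskered transformation $\alpha\circ m'\taking n\circ F\circ m'\Rightarrow n'\circ m'$, and lets the resulting unique $\ell_2$ and $\beta$ serve as $p'$ and $\gamma$. Your version just spells out the composite identifications ($\pi p G=nmG=nFm'$) and the uniqueness claim a bit more explicitly than the paper does.
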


\begin{proof}

This is a direct application of Theorem \ref{thm:query discrete opfibration}. Indeed, in place of  Diagram (\ref{dia:natural transformation}), we draw $$\xymatrix{&&I\ar[dd]^\pi\\\\W'\ar[uurr]^{pG}\ar@/^1pc/[rr]^{nFm'}\ar@/_1pc/[rr]_{n'm'}\ar@{}[rr]|{\Down\alpha m'}&&S}$$ The unique functor and transformation labeled $\ell_2$ and $\beta$ given by the theorem serve as $p'$ and $\gamma$ here.

\end{proof}

\begin{theorem}

Let $\pi\taking I\to S$ be a discrete opfibration. Then $\Gamma^{-,-}(-,\pi)\taking\Qry(\pi)\to\Set$ is functorial. That is, given a morphism of queries $$\xymatrix{&&\\W'\ar[r]_G\ar@/^1.4pc/[rr]^{p'}\ar[d]_{m'}&W\ar@{}[u]|(.3){\Uparrow\gamma}\ar[d]_m\ar[r]_p&I\ar[d]^\pi\\R'\ar[r]^F\ar@/_1.4pc/[rr]_{n'}&R\ar@{}[d]|(.3){\Down\alpha}\ar[r]^n&S,\\&&}$$ there is an induced function, natural in $\Qry(\pi)$,  $$\Gamma^{m,p}(n,\pi)\too\Gamma^{m',p'}(n',\pi).$$

\end{theorem}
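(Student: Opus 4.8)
The plan is to build the induced function by transporting a solution $\ell$ along the $2$-cell data of the morphism of queries, and then to verify the two equations defining a solution by invoking the uniqueness half of Theorem~\ref{thm:query relational fibration} a second time. Concretely, given $\ell\in\Gamma^{m,p}(n,\pi)$ (so $\pi\circ\ell=n$ and $\ell\circ m=p$), first I would form $\ell\circ F\taking R'\to I$, observe that $\pi\circ(\ell\circ F)=n\circ F$, and apply Theorem~\ref{thm:query relational fibration} to the relational fibration $\pi^{R'}\taking I^{R'}\to S^{R'}$ together with the natural transformation $\alpha\taking n\circ F\Rightarrow n'$: this produces a \emph{unique} functor $\ell'\taking R'\to I$ with a natural transformation $\beta\taking\ell\circ F\Rightarrow\ell'$ satisfying $\pi\circ\ell'=n'$ and $\pi\circ\beta=\alpha$. (Equivalently, $\ell'$ is the value at $\ell$ of the function $\Gamma(n,\pi)\to\Gamma(n',\pi)$ obtained from functoriality of $\Gamma(-,\pi)\taking\Prb(S)\to\Set$, Corollary~\ref{cor:new relational fibration from old}, applied to the morphism $(F,\alpha)\taking(R,n)\to(R',n')$.) The induced function is then $\ell\mapsto\ell'$, and the requirement $\pi\circ\ell'=n'$ holds by construction.

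The step I expect to carry the real content is showing $\ell'\circ m'=p'$, i.e.\ that $\ell'$ lands in $\Gamma^{m',p'}(n',\pi)$ rather than merely in $\Gamma(n',\pi)$. For this I would whisker $\beta$ on the right with $m'\taking W'\to R'$, giving $\beta\circ m'\taking (\ell\circ F)\circ m'\Rightarrow\ell'\circ m'$. Using the commutativity $m\circ G=F\circ m'$ of the left-hand square of the morphism of queries, and $\ell\circ m=p$, the domain rewrites as $\ell\circ F\circ m'=\ell\circ m\circ G=p\circ G$, so $\beta\circ m'\taking p\circ G\Rightarrow\ell'\circ m'$ is a lift of the datum $(p\circ G,\ \alpha\circ m')$ along the relational fibration $\pi^{W'}$. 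But the defining equation $\pi\circ\gamma=\alpha\circ m'$ of a morphism of queries says precisely that $\gamma\taking p\circ G\Rightarrow p'$ is another such lift, with the same source $p\circ G$; uniqueness for $\pi^{W'}$ then forces $\ell'\circ m'=p'$ (and $\beta\circ m'=\gamma$). The same computation shows that the square
$$\xymatrix{\Gamma(n,\pi)\ar[r]\ar[d]&\Gamma(n',\pi)\ar[d]\\\Gamma(nm,\pi)\ar[r]&\Gamma(n'm',\pi)}$$
commutes, with vertical maps induced by precomposition with $m$ and $m'$ and bottom map sending $p\mapsto p'$, so the desired function is just the induced map on fibres over $p\mapsto p'$ under the bijection $\Gamma^{m,p}(n,\pi)\iso\Gamma(n,\pi)\cross_{\Gamma(nm,\pi)}\{p\}$ of the Remark above.

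Finally I would deduce functoriality of $\Gamma^{-,-}(-,\pi)\taking\Qry(\pi)\to\Set$ (this is the ``naturality in $\Qry(\pi)$'') entirely from uniqueness of lifts. The identity morphism of a query has underlying functor $\id_R$ and identity $2$-cells, so the transported pair is $(\ell,\id_\ell)$ by uniqueness, giving the identity function. For a composite of two morphisms of queries, whose underlying data is the composite of the two $F$'s together with the pasting of the two $\alpha$'s, carrying out the two transports in succession yields a pair whose functor component equals the one-step transport and whose $2$-cell component is the pasting of the two $\beta$'s, with $\pi$-image the pasted $\alpha$; uniqueness for the relevant $\pi^{R''}$ identifies this with the one-step transport along the composite, so the induced functions compose. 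Since forming fibres is itself functorial, the statement for the $\Gamma^{m,p}$-versions follows. The only genuine subtlety throughout is keeping the strict and non-strict commutativities ($mG=Fm'$, $\pi p=nm$, $\pi p'=n'm'$, $\pi\circ\gamma=\alpha\circ m'$) straight when invoking the uniqueness clause of Theorem~\ref{thm:query relational fibration}; everything else is formal.
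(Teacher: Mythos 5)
Your proposal is correct and follows essentially the same route as the paper's sketch: transport $\ell$ along $\alpha$ via Theorem \ref{thm:query relational fibration} (equivalently Corollary \ref{cor:new relational fibration from old}) to obtain $\ell'$ and $\beta$, then use the uniqueness of lifts for the relational fibration over $W'$ --- exactly the technique of Proposition \ref{prop:induced maps for queries} --- comparing $\beta\circ m'$ with $\gamma$ to conclude $\ell'\circ m'=p'$ and $\beta\circ m'=\gamma$. Your explicit check of identities and composites, and the reformulation via the fibre-product description $\Gamma^{m,p}(n,\pi)\iso\Gamma(n,\pi)\cross_{\Gamma(nm,\pi)}\{p\}$, simply fill in details the paper's ``sketch of proof'' leaves implicit.
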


\begin{proof}[Sketch of proof]

Suppose given a lift $\ell\taking R\to I$ in $\Gamma^{m,p}(n,\pi)$. By Corollary \ref{cor:new discrete opfibration from old} we have a map $\ell'\taking R'\to I$, with $\pi\circ\ell'=n'$, and a natural transformation $\beta\taking\ell\circ F\to\ell'$, with $\pi\circ\beta=\alpha$. We need to show that $\ell'\circ m'=p'$ and $\beta\circ m'=\gamma$. But using the proof technique from Proposition \ref{prop:induced maps for queries}, this follows from Theorem \ref{thm:query discrete opfibration} and the definition of discrete opfibration.

\end{proof}

\begin{remark}

Given a discrete opfibration $\pi\taking I\to S$, we sometimes denote the functor $\Gamma^{-,-}(-,\pi)$ simply by $$\Gamma(\pi)\taking\Qry(\pi)\to\Set.$$

\end{remark}

\subsection{Data migration functors}\label{sec:migration}

Recall (from Definition \ref{def:migration}) that, given a functor $F\taking S\to T$, three data migration functors are induced between the categories $S\set$ and $T\set$. The most straightforward is denoted $\pullb{F}\taking T\set\to S\set$. It has both a left adjoint, denoted $\lpush{F}\taking S\set\to T\set$, and a right adjoint, denoted $\rpush{F}\taking S\set\to T\set$. 

In standard database contexts, schemas evolve over time. We model these schema evolutions as zigzags of functors from one schema to another, along which one can migrate data using a data migration functor. It is useful to know how this will affect queries. Typically, users of a database $\pi\taking I\to S$ are given access to a subset of $\Qry(\pi)$---they do not see the whole database, but instead some collection of queries. As the schema evolves it is important to understand how $\Qry(\pi)$ evolves. In this section we describe some results; for example under a pullback query results are unchanged. 

Let us begin by giving a description of $\rpush{F}$ in terms of where-less queries (see Section \ref{sec:where-less}). Recall that for any object $d\in\Ob(T)$ the ``comma" category $(d\down F)$ is defined as follows: \begin{align*}\Ob(d\down F)&=\{(c,f)\;|\;c\in\Ob(S), f\taking d\to F(c)\}\\\Hom_{(d\down F)}((c,f),(c',f'))&=\{g\taking c\to c'\;|\;f'\circ F(g)=f\}.\end{align*} There is a natural functor $n_d\taking (d\down F)\to S$, and given a morphism $h\taking d\to d'$ in $T$ we have a morphism $(d'\down F)\to(d\down F)$, or more precisely $n_{d'}\to n_d$, in $\Cat_{/S}$.

\begin{proposition}\label{prop:right push and fibrations}

Let $F\taking S\to T$ be a functor and $\gamma\taking S\to\Set$ an instance of $S$ with associated discrete opfibration $\pi\taking I\to S$. Given any object $d\in\Ob(T)$, there is an associated where-less query $$\xymatrix{&I\ar[d]^{\pi}\\(d\down F)\ar[r]_-{n_d}\ar@{-->}[ur]&S}$$ and we have $\rpush{F}(\gamma)(d)\iso\Gamma(n_d,\pi).$ Moreover, a morphism $d\to d'$ in $T$ induces a strict morphism of where-less queries $n_{d'}\to n_d$ ; thus we have a functor $T\to\Prbs(\pi)\op$. Then $\rpush{F}(\gamma)\taking T\to\Set$ is the composition $$T\To{d\mapsto n_d}\Prbs(\pi)\op\To{\Gamma(-,\pi)}\Set.$$

\end{proposition}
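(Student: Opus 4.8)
The plan is to unwind the right pushforward $\rpush{F}$ via its well-known pointwise formula as a limit over comma categories, and then recognize that limit as a set of lifts using Proposition \ref{prop:where-less}. Recall that for $F\taking S\to T$ and $\gamma\taking S\to\Set$, the value of the right Kan extension is $\rpush{F}(\gamma)(d)\iso\lim_{(d\down F)}(\gamma\circ u_d)$, where $u_d\taking (d\down F)\to S$ is the forgetful functor $(c,f)\mapsto c$; this $u_d$ is exactly the probe $n_d$ in the statement. So the first step is simply to cite this formula (or derive it from the definition of $\rpush{F}$ as right adjoint to $\pullb{F}$, which is where the comma category $(d\down F)$ arises).

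Second, I would apply Proposition \ref{prop:where-less} to the probe $n_d\taking (d\down F)\to S$ and the relational fibration $\pi\taking I\to S$ associated to $\gamma$. That proposition gives a natural isomorphism $\Gamma(n_d,\pi)\To{\iso}\lim_{(d\down F)}(\gamma\circ n_d)$. Composing with the Kan extension formula from the first step yields the claimed bijection $\rpush{F}(\gamma)(d)\iso\Gamma(n_d,\pi)$.

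Third, I would check functoriality in $d$, i.e. that the two descriptions of $\rpush{F}(\gamma)$ as a functor $T\to\Set$ agree on morphisms. A morphism $h\taking d\to d'$ in $T$ induces, by precomposition, a functor $(d'\down F)\to (d\down F)$ commuting with the forgetful functors to $S$, hence a strict morphism of probes $n_{d'}\to n_d$ in $\Prbs(S)$; this is the functor $T\to\Prbs(\pi)\op$ (here $\Prbs(\pi)$ just means $\Prbs(S)$ in the presence of $\pi$, as in Definition \ref{def:strict morphisms}). On the $\rpush{F}$ side, the structure map $\rpush{F}(\gamma)(h)$ is induced by this same comma-category functor by the universal property of the limit; on the $\Gamma$ side it is $\Gamma(n_{d'}\to n_d,\pi)$ from Definition \ref{def:strict morphisms}. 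One verifies that the isomorphisms from the second step intertwine these two maps — this is a naturality statement that is part of (or a routine consequence of) Proposition \ref{prop:where-less}. Putting this together identifies $\rpush{F}(\gamma)$ with the composite $T\To{d\mapsto n_d}\Prbs(\pi)\op\To{\Gamma(-,\pi)}\Set$.

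The main obstacle — really the only non-bookkeeping point — is being careful about variance and ensuring the naturality squares commute: one must confirm that the Kan-extension limit maps and the $\Gamma(-,\pi)$ functoriality maps are induced by literally the same comma-category functor, so that the pointwise isomorphism of the second step assembles into a natural isomorphism of functors $T\to\Set$. Everything else is a direct appeal to the definition of $\rpush{F}$, the comma-category formula for right Kan extensions, and Proposition \ref{prop:where-less}.
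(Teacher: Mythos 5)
Your proposal is correct and follows essentially the same route as the paper: the paper likewise combines Proposition \ref{prop:where-less} (giving $\Gamma(n_d,\pi)\iso\lim_{(d\down F)}(\gamma\circ n_d)$) with the pointwise comma-category formula for the right Kan extension (citing Mac Lane, Theorem X.3.1), and disposes of the morphism/functoriality claim with a brief ``follows similarly.'' Your third step just spells out that naturality check in more detail than the paper does.
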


\begin{proof}

Let $F, \gamma, \pi, d,$ and $n_d\taking (d\down F)\to S$ be as in the proposition statement. By Proposition \ref{prop:where-less}, we have $\Gamma(n_d,\pi)\iso\lim_R(\gamma\circ n_d)$. This is exactly the formula for $\rpush{F}(\gamma)(d)$ by \cite[Theorem X.3.1]{Mac}, since $\rpush{F}$ is a right Kan extension. The statement for morphisms follows similarly.

\end{proof}

While Proposition \ref{prop:right push and fibrations} provides an interesting relationship between right pushforwards and queries, it does not allow us to relate queries on a database with queries on its right pushforward. In the following paragraphs, we will show briefly that graph pattern queries do transform nicely with respect to data migration functors $\lpush{F}$ and $\pullb{F}$.

We begin by discussing the left pushforward functor. Given a functor $F\taking S\to T$, we have a migration functor $\lpush{F}\taking S\set\to T\set$. 
%
If $\delta\in S\set$ and $\epsilon\in T\set$ are instances, then there is a bijection between the set of natural transformations $\lpush{F}\delta\to\epsilon$ and the set of commutative diagrams $$\xymatrix{\int(\delta)\ar[r]\ar[d]_{\pi_\delta}&\int(\epsilon)\ar[d]^{\pi_\epsilon}\\S\ar[r]_F&T.}$$ Given a query on $\pi_\delta$, we clearly obtain an induced query on $\pi_\epsilon$, and a solution to the former yields a solution to the latter: $$\xymatrix{W\ar[r]^p\ar[d]_m&\int(\delta)\ar[r]\ar[d]^{\pi_\delta}&\int(\epsilon)\ar[d]^{\pi_\epsilon}\\R\ar@{-->}[ur]\ar[r]_n&S\ar[r]_F&T.}$$ We state this formally in the following proposition.

\begin{proposition}\label{prop:query containment}

Let $F\taking S\to T$ be a functor, $\delta\in S\set$ and $\epsilon\in T\set$ instances, and $\lpush{F}\delta\to\epsilon$ a map of $T$-sets. There exists an induced functor of query categories and a natural transformation diagram: $$\xymatrix{\Qry(\pi_\delta)\ar[rr]\ar[dr]_{\Gamma(\pi_\delta)}&\ar@{}[d]|(.4){\Longrightarrow}&\Qry(\pi_\epsilon)\ar[dl]^{\Gamma(\pi_\epsilon)}\\&\Set&}$$

\end{proposition}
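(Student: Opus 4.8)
\emph{Proof proposal.} The plan is to build the functor $\Qry(\pi_\delta)\to\Qry(\pi_\epsilon)$ by postcomposition with a single functor on total categories, and then to build the comparison $2$-cell between global sections by postcomposing lifts. First I would unpack the hypothesis. By the $(\lpush{F},\pullb{F})$-adjunction, a map of $T$-sets $\lpush{F}\delta\to\epsilon$ is the same as a map $\delta\to\pullb{F}\epsilon$ in $S\set$, and by Lemma \ref{lemma:morphisms on Grothendieck} together with Proposition \ref{prop:pullback on Grothendieck} (this is exactly the bijection recorded in the paragraph preceding the statement) this is the same as a commutative square $\pi_\epsilon\circ G=F\circ\pi_\delta$, where $G\taking I:=\int(\delta)\to\int(\epsilon)=:J$ is a functor over $F$. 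From here on we work with this fixed pair $(F,G)$.

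Next I would define $\Qry(F,G)\taking\Qry(\pi_\delta)\to\Qry(\pi_\epsilon)$. On an object of $\Qry(\pi_\delta)$, i.e. a commutative square with $m\taking W\to R$, $n\taking R\to S$, $p\taking W\to I$ and $\pi_\delta p=nm$, I would keep $W$, $R$ and $m$ unchanged, replace $n$ by $F\circ n\taking R\to T$ and replace $p$ by $G\circ p\taking W\to J$; the new square commutes since $\pi_\epsilon(Gp)=F\pi_\delta p=F(nm)=(Fn)m$. On a morphism, which by Definition \ref{def:cat of queries} consists of $\Phi\taking R'\to R$, $\Psi\taking W'\to W$, $\alpha\taking n\Phi\to n'$, $\gamma\taking p\Psi\to p'$ with $\pi_\delta\gamma=\alpha m'$, I would take $(\Phi,\Psi,F\alpha,G\gamma)$; whiskering preserves the required identities, e.g. $\pi_\epsilon(G\gamma)=F(\pi_\delta\gamma)=F(\alpha m')=(F\alpha)m'$. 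Functoriality of $\Qry(F,G)$ is then the routine observation that whiskering by $F$ and by $G$ commutes with the vertical and horizontal composites of $2$-cells used to compose morphisms of queries, and sends identities to identities.

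Then I would define the comparison. For an object $q=(R,W,n,p)$, recall $\Gamma^{m,p}(n,\pi_\delta)=\{\ell\taking R\to I\mid\pi_\delta\ell=n,\ \ell m=p\}$. Sending $\ell\mapsto G\circ\ell$ gives a function into $\{\ell\taking R\to J\mid\pi_\epsilon\ell=Fn,\ \ell m=Gp\}=\Gamma^{m,Gp}(Fn,\pi_\epsilon)=\Gamma(\pi_\epsilon)\big(\Qry(F,G)(q)\big)$, using $\pi_\epsilon(G\ell)=F\pi_\delta\ell=Fn$ and $(G\ell)m=G(\ell m)=Gp$. These functions are the components of the claimed natural transformation $\Gamma(\pi_\delta)\Rightarrow\Gamma(\pi_\epsilon)\circ\Qry(F,G)$, which is precisely the $2$-cell filling the stated triangle over $\Set$.

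The one step with genuine content, and the one I expect to be the main obstacle, is naturality of this comparison. For a morphism of queries $q\to q'$, the horizontal map $\Gamma^{m,p}(n,\pi_\delta)\to\Gamma^{m',p'}(n',\pi_\delta)$ is \emph{not} mere precomposition with $\Phi$: by the functoriality theorem for $\Gamma^{-,-}(-,\pi)$ in Section \ref{sec:category of queries} (via Corollary \ref{cor:new relational fibration from old} and Theorem \ref{thm:query relational fibration}) it sends $\ell$ to the unique $\ell'\taking R'\to I$ with $\pi_\delta\ell'=n'$ carrying a natural transformation $\beta\taking\ell\Phi\to\ell'$ with $\pi_\delta\beta=\alpha$ (together with the $m'$-compatibility). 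To verify the naturality square I would apply $G$ to this $\ell'$ and $\beta$: this produces $G\ell'\taking R'\to J$ with $\pi_\epsilon(G\ell')=Fn'$ and $G\beta\taking (G\ell)\Phi\to G\ell'$ with $\pi_\epsilon(G\beta)=F\alpha$, i.e. $G\ell'$ satisfies exactly the universal property characterizing the image of $G\ell$ under the horizontal map associated to the query morphism $\Qry(F,G)(q)\to\Qry(F,G)(q')$. The uniqueness clause of Theorem \ref{thm:query relational fibration}, applied to the relational fibration $\pi_\epsilon$, then forces the two to agree, so the square commutes; compatibility with the $m'$-maps is handled the same way. Everything else is bookkeeping — the crux is that the functorial action of $\Gamma$ is pinned down by a universal property, so its compatibility with $G$ must be argued by uniqueness rather than by an explicit formula.
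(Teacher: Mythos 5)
Your proposal is correct and follows essentially the same route as the paper: the paper's proof simply invokes the preceding discussion, in which the map $\lpush{F}\delta\to\epsilon$ is converted into a commutative square $\pi_\epsilon\circ G=F\circ\pi_\delta$ and queries and their lifts are transported by postcomposition with $F$ and $G$. You additionally spell out the action on morphisms of queries and the naturality of the comparison via the uniqueness clause of Theorem \ref{thm:query relational fibration}, details the paper leaves implicit; these checks are accurate.
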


\begin{proof}

The proof follows from the discussion above.

\end{proof}

We now consider the case that $\delta\iso \pullb{F}\epsilon$. 

\begin{proposition}\label{prop:query iso}

Let $F\taking S\to T$ be a functor, let $\epsilon\taking T\to\Set$ be a functor, let $\delta=\pullb{F}\epsilon\taking S\to\Set$ be its pullback, and let $\pi_\delta$ and $\pi_\epsilon$ be as in Diagram (\ref{dia:query on pullback}) below. Then the results of any query on $\pi_\delta$ are the same as the results of the induced query on $\pi_\epsilon$. That is, we have a natural isomorphism diagram $$\xymatrix{\Qry(\pi_\delta)\ar[rr]\ar[dr]_{\Gamma(\pi_\delta)}&\ar@{}[d]|(.4){\parbox{.2in}{\begin{center}$\iso$\\\vspace{-.08in}$\Longrightarrow$\end{center}}}&\Qry(\pi_\epsilon)\ar[dl]^{\Gamma(\pi_\epsilon)}\\&\Set&}$$

\end{proposition}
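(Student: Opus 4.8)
The plan is to reduce everything to the pullback description of the Grothendieck construction. By hypothesis $\delta\iso\pullb{F}\epsilon$, so Proposition \ref{prop:pullback on Grothendieck} supplies an isomorphism $\int(\delta)\iso S\cross_T\int(\epsilon)$ in $\Cat_{/S}$ under which $\pi_\delta$ becomes the projection to $S$; equivalently $\pi_\delta$ is the base change of $\pi_\epsilon$ along $F$. Write $H\taking\int(\delta)\to\int(\epsilon)$ for the other projection, so that the square on $\pi_\delta,\pi_\epsilon,F,H$ commutes. The first step is to record the consequence for where-less queries: for any probe $n\taking R\to S$, the universal property of the pullback identifies a functor $R\to\int(\delta)$ lying over $n$ (via $\pi_\delta$) with a functor $R\to\int(\epsilon)$ lying over $F\circ n$ (via $\pi_\epsilon$), the identification being post-composition with $H$. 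Hence
$$\Gamma(n,\pi_\delta)=\Hom_{\Cat_{/S}}(n,\pi_\delta)\iso\Hom_{\Cat_{/T}}(F\circ n,\pi_\epsilon)=\Gamma(F\circ n,\pi_\epsilon),$$
and this bijection is visibly natural in $n$ for strict morphisms of probes.

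The second step bootstraps to general queries using the fiber-product formula (\ref{dia:Gamma-m,p}): for a query given by $W\To{m}R\To{n}S$ together with known data $p\taking W\to\int(\delta)$, one has $\Gamma^{m,p}(n,\pi_\delta)\iso\Gamma(n,\pi_\delta)\cross_{\Gamma(nm,\pi_\delta)}\{p\}$, and likewise for $\pi_\epsilon$. Setting $p':=H\circ p\taking W\to\int(\epsilon)$, the where-less bijection above intertwines the restriction maps $\Gamma(m,\pi_\delta)$ and $\Gamma(m,\pi_\epsilon)$ (both given by post-composition with $m$) and carries $p$ to $p'$, so it restricts to a bijection on fibers
$$\Gamma^{m,p}(n,\pi_\delta)\iso\Gamma^{m,p'}(F\circ n,\pi_\epsilon).$$
Since the induced functor $\Qry(\pi_\delta)\to\Qry(\pi_\epsilon)$ of Proposition \ref{prop:query containment} sends the query $(R,W,n,p)$ to precisely $(R,W,F\circ n,p')$, this is exactly the comparison of result sets appearing in the statement, now shown to be invertible.

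The last step is naturality. Given a morphism of queries $(F_0,G_0,\alpha,\gamma)\taking(R,W,n,p)\to(R',W',n',p')$ in $\Qry(\pi_\delta)$, the induced function between result sets is assembled, via Theorem \ref{thm:query relational fibration} (through Corollary \ref{cor:new relational fibration from old}), out of precomposition with $F_0$ and the canonical lift of $\alpha$ along the relational fibration $\pi_\delta$. Transporting this recipe through the universal property of the pullback yields the corresponding function on the $\pi_\epsilon$-side, so the naturality square commutes and the pointwise bijections upgrade the natural transformation of Proposition \ref{prop:query containment} to the claimed natural isomorphism. I expect this verification to be the one real obstacle: one must confirm that the pullback identification respects not merely objects and restriction maps but also the lifts of natural transformations produced by Theorem \ref{thm:query relational fibration}. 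This reduces to the observation that those lifts are pinned down by the same uniqueness property of discrete opfibrations on both sides, so it is bookkeeping rather than a genuine difficulty — though it is where the care is required.
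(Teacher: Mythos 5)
Your proof is correct and follows essentially the same route as the paper: both rest on Proposition \ref{prop:pullback on Grothendieck} identifying $\int(\delta)\iso S\cross_T\int(\epsilon)$ and then on the universal property of the pullback to obtain the bijection of lifts. The paper's own proof is simply terser---it applies the universal property directly to the full lifting square---whereas you factor the argument through where-less queries, the fiber formula (\ref{dia:Gamma-m,p}), and an explicit naturality check.
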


\begin{proof}

Consider the diagram 
\begin{align}\label{dia:query on pullback}
\xymatrix{\int(\delta)\ar[r]\ar[d]_{\pi_\delta}\ullimit&\int(\epsilon)\ar[d]^{\pi_\epsilon}\\S\ar[r]_F&T,}
\end{align} 
which is a pullback by Proposition \ref{prop:pullback on Grothendieck}. Given a query on $\pi_\delta$, we obtain a query on $\pi_\epsilon$ as in Proposition \ref{prop:query containment}. The function from solutions for $\pi_\delta$ to solutions for $\pi_\epsilon$ is a bijection by the universal property of pullbacks. 
 $$\xymatrix{W\ar[r]^-p\ar[d]_m&\int(\delta)\ullimit\ar[r]\ar[d]_(.45){\pi_\delta}&\int(\epsilon)\ar[d]^{\pi_\epsilon}\\R\ar@/_.5pc/@{-->}[urr]\ar[r]_n&S\ar[r]_F&T.}$$ 
 Indeed, given a lift $R\to\int(\epsilon)$ of $\pi_\epsilon$, the fact that (\ref{dia:query on pullback} is a pullback means that there is a unique lift of $\pi_\delta$ mapping to it.

\end{proof}

\section{Future work}\label{sec:future}

This paper has set up an analogy between database queries and constraints on the one hand, and a now classical approach to algebraic topology---the lifting problem---on the other. Data on a schema is analogous to a covering space or fibration: the local quality of this fibration is determined by constraints, and the locating of sections that satisfy a set of properties is the posing of a query.

There are a few interesting directions for future research. The first is to make a connection to the relatively new field of {\em homotopy type theory (HoTT)} (see \cite{Awo},\cite{Voe}). The idea is that instead of two paths through a database schema being {\em equal}, one could declare them merely {\em equivalent}; if paths are declared equivalent in more than one way, these equivalences may also be declared as equivalent (or not). In this context, two observations on data may not be definitionally equal, but provably equal, and we consider the proofs and the differences between proofs as part of the data. To make this connection, the schema of a database should be a quasi-category (\cite{Jo2},\cite{Lur}) $\mcX$ rather than an ordinary category. Each higher simplex encodes a proof that different paths (or paths of paths, etc.) through the schema are equivalent. We might replace the instance data by a functor (map of quasi-categories) $\mcX\to\Type$, where $\Type$ is the quasi-category of homotopy types. In this context, classical homotopical questions, e.g. from the theory of model categories (\cite{Hir}) may be even more applicable.

Another direction for future research is to use topological tools to investigate or ``mine" data. For example, given a functor $\delta\taking S\to\Set$, we can compose with the functor $i\taking\Set\to\Top$ which sends each set to the corresponding discrete topological space. The homotopy colimit of $i\circ\delta$ is a topological space, of possibly any dimension and homotopy type, that encodes the connection pattern of the data. This space is homotopy equivalent to the nerve of the data bundle, $$\hocolim(i\circ\delta)\simeq N(\dispInt\delta)$$ (see \cite{Dug}). Thus we could report homotopy invariants of the data $\delta$, such as connected components, loops, etc. The question is whether these invariants would be meaningful and useful. For schemas of classical mathematical interest, such as the simplicial indexing category $S=\bD\op$, the homotopy colimit of $i\circ\delta$ is exactly what we want; it is the geometric realization of $\delta$. It remains to be seen whether such homotopy invariants may be useful in other contexts; e.g. there may be some connection to the analysis given by persistent homology (see \cite{Ghr},\cite{Car}).

A third and fairly straightforward project would be to adapt Garner's small object argument (see \cite{Gar}) to our notion of constraints. Garner's argument works, and provides nice universal properties, in the case of what we have called ``universal constraint sets" (see Section \ref{sec:discrete ops}). The question is, if we apply his techniques to local constraints, such as those in Example \ref{ex:const prod} used to declare that one table is the product of two others, does his procedure still result in a discrete opfibration with all the nice universal properties enjoyed in the universal case? We conjecture that it will. One should also check whether the results obtained from that procedure agree with those from the so-called universal chase procedure (see \cite{DNR}). Indeed, they should provide equivalent results, since both claim to be universal in the same way.

\bibliographystyle{amsalpha}

\end{document}